\documentclass[a4paper,11pt,reqno]{amsart}

\usepackage[utf8]{inputenc}
\usepackage[english]{babel}
\usepackage{amsmath}
\usepackage{amssymb}
\usepackage{amsfonts}
\usepackage{amsthm}
\usepackage{bbm}
\usepackage{enumitem}
\usepackage{xcolor}
\usepackage[colorlinks=false]{hyperref}
\setlength{\overfullrule}{5 pt} 


\newcommand{\wto}{\rightharpoonup}
\newcommand{\eps}{\varepsilon}
\newcommand{\epsj}{{\varepsilon_j}}
\newcommand{\mc}{\mathcal}
\newcommand{\xeps}{{u^\eps}}
\newcommand{\xepsd}{{\dot{u}^\eps}}
\newcommand{\xepsdd}{{\ddot{u}^\eps}}
\newcommand{\xepsj}{{u^\epsj}}

\newcommand{\RR}{\mathcal{R}}

\newcommand{\M}{\mathbb{M}}
\newcommand{\V}{\mathbb{V}}
\newcommand{\R}{\mathbb{R}}
\newcommand{\N}{\mathbb{N}}
\newcommand{\Q}{\mathbb{Q}}
\newcommand{\U}{\mathbb{U}}
\renewcommand{\to}{\rightarrow}
\renewcommand{\d}{\,\mathrm{d}}
      
\newcommand{\norm}[1]{\left\lVert#1\right\rVert}  
\newcommand{\scal}[2]{\left\langle #1,#2\right\rangle}
\DeclareMathOperator*{\essup}{ess\,sup}
\DeclareMathOperator{\dist}{dist}

\numberwithin{equation}{section}
\newtheorem{thm}{Theorem}[section]
\newtheorem{defi}[thm]{Definition}
\newtheorem{prop}[thm]{Proposition}
\newtheorem{lemma}[thm]{Lemma}
\newtheorem{cor}[thm]{Corollary}

\theoremstyle{definition}
\newtheorem{rmk}[thm]{Remark}

\oddsidemargin 5pt
\evensidemargin 5pt
\textwidth16.cm
\textheight22.5cm
\flushbottom

\begin{document}
	
	\author[F. Riva, G. Scilla and F. Solombrino]{Filippo Riva, Giovanni Scilla and Francesco Solombrino}
	
	\title [Inertial Balanced Viscosity and Inertial Virtual Viscosity solutions]{The notions of Inertial Balanced Viscosity and Inertial Virtual Viscosity solution for rate-independent systems}
	
	\begin{abstract}
		{The notion of Inertial Balanced Viscosity (IBV) solution to rate-independent evolutionary processes is introduced. Such solutions are characterized by an energy balance where a suitable, rate-dependent, dissipation cost is optimized at jump times. The cost is reminiscent of the limit effect of small inertial terms. Therefore, this notion proves to be a suitable one to describe the asymptotic behavior of evolutions of mechanical systems with rate-independent dissipation in the limit of vanishing inertia and viscosity. It is indeed proved, in finite dimension, that these evolutions converge to IBV solutions. If the viscosity operator is neglected, or has a nontrivial kernel, the weaker notion of Inertial Virtual Viscosity (IVV) solutions is introduced, and the analogous convergence result holds. Again in a finite-dimensional context, it is also shown that IBV and IVV solutions can be obtained via a natural extension of the Minimizing Movements algorithm, where the limit effect of inertial terms is taken into account.}
	\end{abstract}
	
	\maketitle
	
	{\small
		\keywords{\noindent {\bf Keywords:} Inertial Balanced Viscosity solutions, Inertial Virtual Viscosity solutions, rate-independent systems, vanishing inertia and viscosity limit, minimizing movements scheme, variational methods
		}
		\par
		\subjclass{\noindent {\bf 2020 MSC:} 49J40, 49S05, 70G75, 74H10
			
		}
	}

	\pagenumbering{arabic}
	
	\medskip
	
	\tableofcontents
	
	\section{Introduction}

	Rate-independent evolutions frequently occur in physics and mechanics when the problem under consideration presents such small rate-dependent effects, as inertia or viscosity, that can be neglected. Several applications can be (formally) modelled by the doubly nonlinear differential inclusion
	\begin{equation}
		\begin{cases}
			\partial \mathcal{R}(\dot{u}(t)) + D_x\mathcal{E}(t,u(t))\ni0\,,&\text{in $X^*$,}\quad \text{for a.e. }t\in [0,T],\\
			u(0)=u_0\,,
		\end{cases}
		\label{quasistprob}
	\end{equation}
	where $\mc R$ is a rate-independent dissipation potential, while $\mc E$ is a time-dependent potential energy. In this paper we limit ourselves to the case of a finite-dimensional normed space $X$, although we plan to extend the whole analysis to the infinite-dimensional context, where several difficulties concerning weak topologies and nonsmoothness of $\mc E$ naturally arise (we refer to the monograph \cite{MielkRoubbook} for more details, also compare \cite{MielkRosSav12} with \cite{MielkRosSav16}).\par 
 If the driving energy $\mc E$ is nonconvex, continuous solutions to \eqref{quasistprob} are not expected to exist, and thus in the past decades huge efforts have been spent to develop weak notions of solution capable of describing the behaviour of the system at jumps. A first attempt can be found in the notion of \emph{Energetic solution} \cite{MielkTheil1, MielkTheil} based on a global stability condition together with an energy balance which must hold for every $t\in[0,T]$: 
 \begin{equation*}
 	\begin{cases}\displaystyle
 		{\rm (GS)}\quad\mc E(t,u(t))\le \mc E(t,x)+\mc R(x-u(t)),&\text{for every }x\in X,\\
 		\displaystyle	{\rm (EB)}\quad\mc E(t,u(t))+V_\mc R(u;0,t)=\mc E(0,u_0)+\int_{0}^{t}\partial_t\mc E(r,u(r))\d r.&
 	\end{cases}
 \end{equation*}
	Condition (GS) actually turns out to be still too restrictive in the nonconvex case, where a local minimality condition would be preferable. Starting from this consideration, in \cite{MielkRosSav16,MielkRosSav12,MielkRosSav09} the notion of \emph{Balanced Viscosity solution} has been introduced and analysed. See also the recent paper \cite{Rind}. Their idea relies on the fact that physical solutions to \eqref{quasistprob} should arise as the vanishing-viscosity limit of a richer and more natural viscous problem
	\begin{equation}\label{viscous}
		\begin{cases}
			\eps \mathbb{V}\xepsd(t)+\partial\mc R(\xepsd(t))+D_x\mc E(t,\xeps(t))\ni 0,&\text{in $X^*$,}\quad \text{for a.e. }t\in [0,T],\\
			\xeps(0)=u^\eps_0\,,
		\end{cases}
	\end{equation}
	as the parameter $\eps\to 0$. Here, $\V$ denotes a symmetric positive-definite linear operator modelling viscosity. Actually, in \cite{MielkRosSav16, MielkRosSav12} more general viscous potentials are considered. The resulting evolution, called Balanced Viscosity (BV) solution, turns out to satisfy a local stability condition together with an augmented energy balance:	 
	\begin{equation*}
		\begin{cases}\displaystyle
			{\rm (LS)}\quad -D_x\mc E(t,u(t))\in \partial \mc R (0),&\text{for a.e. }t\in [0,T],\\
			{\rm (EB^{\V})}	\displaystyle\quad\mc E(t,u(t))+V^\V_\mc R(u;0,t)=\mc E(0,u_0)+\int_{0}^{t}\partial_t\mc E(r,u(r))\d r,& \text{for all }t\in [0,T].
		\end{cases}
	\end{equation*}
	{While} in (EB) the classical total variation (actually, $\mc R$-variation, see Definition~\ref{rvar}) controls both the continuous part {$u_{\rm co}$} of the evolution and the jump part, {as} it holds (see also \eqref{contvar})
	\begin{equation*}
		V_\mc R(u;0,t)=V_\mc R(u_{\rm co};0,t)+ \sum_{r\in J_u\cap[0,t]}  \left(\mc R(u(r){-}u^-(r))+\mc R(u^+(r){-}u(r))\right),
	\end{equation*}
	in (EB$^\V$) the jump part of the \lq\lq viscous\rq\rq variation involves a more complicated cost function (a Finsler distance) which takes into account the original presence of viscosity:
	\begin{equation*}
		V^\V_\mc R(u;0,t)=V_\mc R(u_{\rm co};0,t)+\sum_{r\in J_u\cap[0,t]}\left(  c^{\V}_r(u^-(r),u(r))+c^{\V}_r(u(r),u^+(r))\right).
	\end{equation*}
	At time $t\in[0,T]$, the viscous cost function is obtained as
	\begin{equation}\label{costvisc}
		c^\V_t(u_1,u_2)\!:=\!\inf\left\{ \!\int_{0}^{1}\!\!\!\!p_\V(\dot{{v}}(r),{-}D_x\mc E(t,v(r)))\d r\!\mid\! v\in W^{1,\infty}(0,1;X), v(0)=u_1,v(1)=u_2\right\}\!,
	\end{equation}
	where $p_\V$, called \emph{vanishing-viscosity contact potential}, is a suitable density arising from both the viscous and the rate-independent dissipation (see also Definition~\ref{contpot}).\par 
	In \cite{MielkRosSav16, MielkRosSav12} it is also shown that BV solutions can be obtained as the limit of time discrete approximations, solutions to the recursive discrete-in-time variational incremental scheme with time step $\tau$
		\begin{equation}\label{schemevisc}
		u^k_{\tau,\varepsilon}\in \mathop{\rm arg\,min}\limits_{x\in X}\left\{ \frac{\eps}{2\tau}\|x-u_{\tau,\varepsilon}^{k-1}\|^2_{\mathbb{V}}+\mc R\left({x-u_{\tau,\varepsilon}^{k-1}}\right) + \mc E(t^{k},x)\right\}, \quad k=1,\dots,T/\tau,
	\end{equation}
	when sending simultaneously $\eps$ and $\tau$ to $0$ with also $\tau/\eps\to 0$.\par 
	 Although the notion of BV solution turned out to be extremely powerful in applications (see for instance \cite{AlLazLuc, CrRossi,KneRosZan, MielkRosSavmulti}), it still lacks inertial terms, which are however essential in the description of real world phenomena, as stated by the second principle of dynamics.\par
	 In this paper we thus present a novel notion of solution which takes into account this feature. Our starting point is augmenting \eqref{viscous} with an inertial term	 
	\begin{equation}
		\begin{cases}
			\eps^2 \mathbb{M}\xepsdd(t)+\eps \mathbb{V}\xepsd(t)+\partial\mc R(\xepsd(t))+D_x\mc E(t,\xeps(t))\ni 0, \quad\text{in $X^*$,}\quad \text{for a.e. }t\in [0,T],\\
			\xeps(0)=u^\eps_0,\quad\xepsd(0)=u^\eps_1,
		\end{cases}
		\label{mainprob}
	\end{equation}
	and then sending $\eps\to 0$, namely performing a \emph{vanishing-inertia and viscosity} argument. The symmetric positive-definite linear operator $\M$ appearing in \eqref{mainprob} represents masses. Its presence allows to consider also the case of null viscosity, i.e. $\V=0$, or more generally of a positive-semidefinite linear operator. We point out that such a limit procedure is also known as \emph{slow-loading} limit, since \eqref{mainprob} comes from a dynamic problem with slow data after a reparametrization of time. We refer the interested reader to \cite{GidRiv} or to \cite[Chapter 5]{MielkRoubbook} for a more detailed explanation.\par 
	This approach has already been adopted for concrete models (in infinite dimension) in \cite{DMSap, DMSca,LazNar, LazRosThomToad,MielkPetr,Rivquas,Sca}, all in the case of convex, or even quadratic, energies $\mc E$. An abstract analysis has been performed in \cite{GidRiv}, in finite dimension and always under convexity assumptions. {Hence, the results contained in this paper on the one hand represent an extension to nonconvex energies of the ones presented in \cite{GidRiv} (see Remark~\ref{rmk:ex}), and on the other hand put the basis for an abstract investigation in infinite dimension, where nonconvex problems are common in applications (we refer again to \cite{AlLazLuc, CrRossi,KneRosZan, MielkRosSavmulti, MielkRoubbook}).}\par
 In our nonconvex setting, the limiting evolution of \eqref{mainprob} provides the novel notion of \emph{Inertial Balanced Viscosity (IBV) solution} (we refer to the discussion in Section~\ref{sec:IBV} for the subtler notion of \emph{Inertial Virtual Viscosity (IVV) solution}, arising when the viscosity operator $\V$ is not positive-definite) for the rate-independent system \eqref{quasistprob}, namely a function satisfying the same local stability condition (LS) than BV solutions, together with an energy balance in which the cost at jump points is sensitive of the presence of inertia:
		\begin{equation*}
		\begin{cases}\displaystyle
			{\rm (LS)}\quad\,\,\, -D_x\mc E(t,u(t))\in \partial \mc R (0),\quad\quad\quad\quad\text{for a.e. }t\in[0,T]\\
			{\rm (EB^{\M,\V})}	\displaystyle\,\,	\mc E(t,u^+(t)){+}V_\RR(u_{\rm co};s,t)+\!\!\!\!\!\!\!\sum_{r\in J^{\rm e}_u\cap[s,t]}\!\!\!\!\!\! c^{\M,\V}_r(u^-(r),u^+(r))=	\mc E(s,u^-(s)){+}\!\!\int_{s}^{t}\!\!\!\partial_t\mc E(r,u(r))\d r,\\
			\quad\quad\quad\quad\text{for every }0\le s\le t\le T,
		\end{cases}
	\end{equation*}
	where now the cost turns out to be 
		\begin{equation*}
		c^{\mathbb{M},\V}_t(u_1,u_2):=\inf\left\{\int_{-N}^{N} p_\V(\dot{v}(r),-\mathbb{M}\ddot{v}(r)-D_x\mc E(t,v(r)))
		\,\mathrm{d}r\mid \,\, N\in\N,\, v\in V^{\mathbb{M},N}_{u_1,u_2} \right\}\,,
	\end{equation*}
	where
	\begin{equation*}
		V^{\mathbb{M},N}_{u_1,u_2}\!:=\!\left\{\!v\in W^{2,\infty}({-}N,N;X)\mid v({-}N)\!=\!u_1, v(N)\!=\!u_2,\, \M\dot{v}(\pm N)\!=\!0,\, \essup_{r\in [-N,N]}\|\M\ddot{v}(r)\|_* \!\le\! \overline{C}\!\right\}.
	\end{equation*}
	Notice that, differently than in the vanishing-viscosity case, the dissipation cost we consider is no longer invariant under a time-reparametrization, due to the presence of the second-order term $\mathbb{M}\ddot{v}(r)$ inside the integral. This prevents an easy generalisation of the notion of \emph{Parametrized BV solution}, again introduced in \cite{MielkRosSav16,MielkRosSav12}, to the inertial setting: such a notion is indeed build starting from a suitable viscous-reparametrization of the time variable. Furthermore, the rate-dependent nature of the cost forces one to consider minimization problems on an asymptotically infinite time horizon, and take the infimum over them.
	We also refer to \cite{Ago, ScilSol} for a similar analysis with no rate-independent dissipation, namely considering $\mc R=0$, where an analogous notion of solution was developed. {As it happened in \cite{ScilSol}, we prefer to consider a notion of solution which does not depend on the chosen representative of $u$ in its Lebesgue class, which is done by considering left and right limits only in the energy balance.}\par 

      {The proposed notion of solution is indeed a suitable one to extend the results of \cite{MielkRosSav16, MielkRosSav12}  to a context where the limiting effect of inertial terms is taken in consideration. We show this in our main result, Theorem \ref{mainthm}, which fulfills a twofold goal. First, we show convergence of the solutions $\xeps$ of \eqref{mainprob} to an Inertial Balanced Viscosity solution of \eqref{quasistprob}, under a quite general set of assumptions on the energy $\mc E$, which includes the one considered in \cite{GidRiv}. Secondly}, we also prove that IBV solutions can be obtained via a natural extension of the Minimizing Movements algorithm \eqref{schemevisc}, namely
		\begin{equation}\label{schemeintro}
		u^k_{\tau,\varepsilon}\in \mathop{\rm arg\,min}\limits_{x\in X}\left\{\frac{\varepsilon^2}{2\tau^2}\|x-2u_{\tau,\varepsilon}^{k-1}+u_{\tau,\varepsilon}^{k-2}\|^2_{\mathbb{M}}+ \frac{\eps}{2\tau}\|x{-}u_{\tau,\varepsilon}^{k-1}\|^2_{\mathbb{V}}+\mc R\left({x{-}u_{\tau,\varepsilon}^{k-1}}\right) + \mc E(t^{k},x)\right\},
	\end{equation}
	by sending both $\tau$ and $\eps$ to $0$. Differently from \eqref{schemevisc}, for technical reasons we need to strenghten the rate of convergence requiring $\tau/\eps^2$ to be bounded. {Furthermore, we have to require $\mc E$ to be $\Lambda$-convex (Assumption (E5) in Section \ref{sec:Setting}). Such a condition, which  amounts to require that the sum of $\mc E$ with a suitably large quadratic perturbation is convex, is quite typical in the analysis of such approximation schemes (see \cite{AGS2008}) and complies with many relevant applications. It actually allows one to have precise estimates on some rest terms in the energy balance, which arise from the iterative minimization schemes.}
	
	\subsection{Plan of the paper} In Section~\ref{sec:Setting} we fix the main notation and list the main assumptions of the paper. We also recall some basic properties of functions of bounded $\mathcal{R}$-variation (Section~\ref{subsec:Rvar}). In Section~\ref{sec:IBV} we introduce the notions of Inertial Balanced Viscosity and Inertial Virtual Viscosity solution. We also define the contact potentials (Section~\ref{sub:contpot}) and the regularized contact potentials (Section~\ref{sub:regcontpot}), while in Section~\ref{sub:costfun} we introduce the inertial cost function which will characterize the description of the jumps. Section~\ref{sec:slowload} contains the first characterization of the IBV and IVV solutions as the slow-loading limit as $\varepsilon\to0$ of dynamical solutions to \eqref{mainprob}. Finally, with Section~\ref{sec:discrete}, we derive these solutions as the limit of the time-discrete incremental variational scheme \eqref{schemeintro} as $\tau$ and $\eps$ go simultaneously to 0.
	
	\section{Notation and setting}\label{sec:Setting}
	
	Let $(X,\norm{\,\cdot\,})$ be a finite-dimensional normed vector space. We denote by $(X^*,\norm{\,\cdot\,}_*)$ the topological dual of $X$ and by $\langle w,\,v\rangle$ the duality product between $w\in X^*$ and $v\in X$. For $R>0$, we denote by $B_R$ the open ball in $X$ of radius $R$ centered at the origin, and by $\overline{B_R}$ its closure. \par
	Given any symmetric positive-semidefinite linear operator $\mathbb{Q}\colon X\to X^*$ we introduce the induced (Hilbertian) seminorm
	\begin{equation}
		|x|_\mathbb{Q}:=\scal{\mathbb{Q}x}{x}^\frac 12,
	\end{equation}
	and we denote with a capital letter $Q\ge0$ a nonnegative constant satisfying
	\begin{equation*}
		0\le |x|_\mathbb{Q}^2\le Q\|x\|^2,\quad\text{ for every }x\in X.
	\end{equation*}
	{We point out that such a constant $Q$ exists since in finite dimension any linear operator is necessarily continuous. The least $Q$ that may be chosen here is the operator norm of $\Q$, denoted by $\|\Q\|_{\rm op}$.}\par 
	If $\mathbb{Q}$ is positive-definite, the induced seminorm is actually a norm, denoted by $\|\cdot\|_\mathbb{Q}$, and, up to {possibly} enlarging the constant $Q$, there holds
	\begin{equation}
		\frac{1}{Q}\|x\|^2\le \|x\|_\mathbb{Q}^2\le Q\|x\|^2,\quad\text{ for every }x\in X.
	\end{equation}
	Furthermore the inverse operator $\mathbb{Q}^{-1}\colon X^*\to X$ induces in $X^*$ the norm 
	\begin{equation*}
		\|w\|_{\mathbb{Q}^{-1}}:=\scal{w}{\mathbb{Q}^{-1}w},
	\end{equation*} 
	which is dual to $\|\cdot\|_\mathbb{Q}$ and thus satisfies
	\begin{equation}
		|\scal{w}{v}|\le \|v\|_\mathbb{Q}\|w\|_{\mathbb{Q}^{-1}},\quad\text{ for every }w\in X^*\text{ and }v\in X.
	\end{equation}

	We briefly recall some basic definitions in convex analysis (see for instance \cite{Rocka}). Given a proper, convex, lower semicontinuous function $f\colon X\to(-\infty,+\infty]$, its (convex) subdifferential $\partial f\colon X\rightrightarrows X^*$ at a point $v\in X$ is defined as
	\begin{equation*}
		\partial f(v)=\{w\in X^*\mid f(z)\ge f(v)+\langle w,z-v\rangle,  \quad\text{for every $z\in X$}\}.
	\end{equation*}
{Notice that if $f(v)=+\infty$, then from the very definition it turns out that $\partial f(v)=\emptyset$.}\\
	The Fenchel conjugate of $f$ is the convex, lower semicontinuous function
	\begin{equation*}
		f^*\colon X^*\to (-\infty,+\infty],\quad \text{defined as }\quad f^*(w):=\sup\limits_{v\in X} \{\scal{w}{v}-f(v)\},
	\end{equation*}
	and for every $w\in X^*$ and $v\in X$ it satisfies
	\begin{equation}\label{Fenchel}
		f^*(w)+f(v)\ge \scal{w}{v},\quad\text{ with equality if and only if }\quad w\in \partial f(v).
	\end{equation}
	Given a subset $A\subset X$, we denote with $\chi_A\colon X\to [0,+\infty]$ its characteristic function , defined as
	\begin{equation*}
		\chi_A(x):=\begin{cases}
			0,&\text{if $x\in A$},\\
			+\infty, &\text{if $x\notin A$}.
		\end{cases}
	\end{equation*}
	
	\subsection{Main assumptions}
	We list below the main assumptions we will use throughout the paper.\\
	
	\noindent In the dynamic problem \eqref{mainprob} the inertial term is described by a 
	\begin{equation}\label{mass}
		\text{symmetric positive-definite linear operator $\mathbb{M}\colon X\to X^*$ ,}
	\end{equation}
	 which represents a \emph{mass distribution}.\\
	
	\noindent The possible presence of \emph{viscosity} is also considered by introducing the 
	\begin{equation}\label{viscosity}
			\text{symmetric positive-semidefinite linear operator $\mathbb{V}\colon X\to X^*$.}
	\end{equation}
	In particular, in our analysis we also include the case $\mathbb{V}\equiv 0$ (for which $|x|_\mathbb V\equiv0$), corresponding to the absence of viscous friction forces. \\
	
	Both the rate-independent \eqref{quasistprob} and the dynamic \eqref{mainprob} problems are damped by a \emph{rate-independent dissipation potential} $\mc R\colon X\to[0,+\infty)$, which models for instance dry friction. We make the following assumption:
	\begin{enumerate}[label=\textup{(R\arabic*)}, start=1]
		\item \label{hyp:R1}  the function $\mc R$ is coercive, convex, and positively homogeneous of degree one.
	\end{enumerate}
	Assumption \ref{hyp:R1} implies subadditivity, namely 
	\begin{equation*}
		\mc R (v_1+v_2)\leq \mc R(v_1)+\mc R(v_2)\,, \quad \mbox{ for every }v_1,v_2\in X\,,
	\end{equation*}
	and the existence of two positive constants $\alpha^*\ge\alpha_*>0$ for which
	\begin{equation}\label{Rbounds}
		\alpha_*\norm{v}\le \mc R(v)\le \alpha^*\norm{v},\quad\text{ for every }v\in X.
	\end{equation}
	This means that $\RR$ fails to be a norm only for the lack of symmetry.\par 
	Furthermore, since $\mc R$ is one-homogeneous, for every $v\in X$ its subdifferential $\partial\mc R(v)$ can be characterized by
	\begin{equation}
		\partial \mc R(v) = \{w\in \partial\mc R(0)\mid\,\,  \langle w,v\rangle=\mc R(v)\}\subseteq \partial\mc R(0)=:K^*.
		\label{2.2mielke}
	\end{equation}
	By \eqref{Rbounds} we notice that there holds
	\begin{equation}\label{boundedness}
		K^*\subseteq \overline{B_{\alpha^*}}.
	\end{equation}
	It is also well-known (see, e.g., \cite{Rocka}) that $K^*$ coincides with the proper domain of the Fenchel conjugate $\mc R^*$ of $\mc R$, indeed it actually holds $\mc R^*= \chi_{K^*}$.\\
	
	We finally consider the driving \emph{potential energy} $\mc E\colon [0,T]\times X\to [0,+\infty)$, which we assume to possess the following properties:
	\begin{enumerate}[label=\textup{(E\arabic*)}]
		\item \label{hyp:E1} $\mc E (\cdot,u)$ is absolutely continuous in $[0,T]$ for every $u\in X$; 
		\item \label{hyp:E2} $\mc E(t,\cdot)$ is differentiable for every $t\in[0,T]$ and the differential $D_x \mc E$ is continuous from $[0,T]\times X$ to $X^*$;
		\item \label{hyp:E3} for a.e. $t\in[0,T]$ and for every $u\in X$ it holds
		\begin{equation*}
			\left|{\partial_t} \mc E(t,u)\right|\le a(\mc E(t,u))b(t),
		\end{equation*}
		where $a\colon [0,+\infty)\to[0,+\infty)$ is nondecreasing and continuous, while $b\in L^1(0,T)$ is nonnegative;
		\item \label{hyp:E4}for every $R>0$ there exists a nonnegative function $c_R\in L^1(0,T)$ such that for a.e. $t\in[0,T]$ and for every $u_1,u_2\in {B_R}$ it holds 
		\begin{equation*}
			\left|{\partial_t}\mc E(t,u_2)-{\partial_t}\mc E(t,u_1)\right|\le c_R(t)\|u_2-u_1\|.
		\end{equation*}
	\end{enumerate}	
	We point out that the prototypical example of potential energy
	\begin{equation}\label{example}
		\mc E(t,u)=\mc U(u)-\scal{\ell(t)}{u},
	\end{equation}
	with $\mc U\in C^1(X)$ superlinear and $\ell\in W^{1,1}(0,T;X^*)$, fulfils all the previous assumptions.
	
	As noticed in \cite{GidRiv}, under these hypotheses one can prove that $\mc E$ is a continuous map, and that $t\mapsto \mc E(t,u(t))$ is absolutely continuous (resp. of bounded variation) if $u$ is absolutely continuous (resp. of bounded variation). 
	\begin{rmk}\label{rmkE}
		Thanks to \ref{hyp:E4}, it is easy to see that $D_x \mc E(\cdot,u)$ is absolutely continuous in $[0,T]$ for every $u\in X$:
		\begin{align*}
			&\qquad\|D_x \mc E(t,u)-D_x \mc E(s,u)\|_*=\scal{D_x \mc E(t,u)-D_x \mc E(s,u)}{v}\\
			&=\lim\limits_{h\to 0}\frac{\mc E(t,u{+}hv)-\mc E(t,u)-\mc E(s,u{+}hv)+\mc E(s,u)}{h}\le \liminf_{h\to 0}\frac 1h\int_{s}^{t}|\partial_t\mc E(r,u{+}hv)-\partial_t\mc E(r,u)|\d r\\
			&\le \int_{s}^{t}c_R(r)\d r\,,
		\end{align*}
		where $v\in X$ is a suitable unit vector at which the dual norm is attained, and $R$ can be chosen for instance equal to $\norm{u}+1$.
		This last property will be used in Proposition~\ref{propatomic} in order to apply a chain-rule formula for functions of bounded variation (see \cite{CrDeCicco}).
	\end{rmk}
{\begin{rmk}\label{rmk:ex}
		All the applications presented in \cite[Section~7]{GidRiv}, basically regarding masses connected with springs, are described by adding together elastic quadratic energies of the form
		\begin{equation}
			\mc E(t,u)=\frac{k}{2}(u-\ell(t))^2,
		\end{equation}
		with $k>0$ and $\ell\in W^{1,1}(0,T;\R)$. This specific form simply is the second order expansion of the real elastic potential energy of the springs
		\begin{equation}
			\mc E(t,u)=k(1-\cos(u-\ell(t))),
		\end{equation}
	which is of course nonconvex. It is however straightforward to check that it satisfies \ref{hyp:E1}-\ref{hyp:E4} (and actually also \ref{hyp:E3'}, \ref{hyp:E5} below), and thus it can be included within the framework of this paper.
\end{rmk}}
	In Section~\ref{sec:discrete}, where we deal with the discrete approximation of IBV and IVV solutions, in addition to the previous assumptions, we need to require:
	
	\begin{enumerate}[label=\textup{(E3')}]
		\item \label{hyp:E3'} the energy $\mc E$ fulfils \ref{hyp:E3} with the particular choice $a(y)=y+a_1$, for some $a_1\ge 0$;
	\end{enumerate}
	\medskip
	\begin{enumerate}[label=\textup{(E5)}]
		\item \label{hyp:E5} $\mc E(t,\cdot)$ is $\Lambda$-convex for every $t\in [0,T]$; i.e., there exists $\Lambda>0$ such that for every $t\in[0,T]$, $u_1,u_2\in X$, and every $\theta\in(0,1)$ it holds
		\begin{equation*}
			\mc E(t,(1-\theta)u_1+\theta u_2) \leq (1-\theta) \mc E(t,u_1) + \theta \mc E(t,u_2) + \frac{\Lambda}{2} \theta(1-\theta)\|u_1-u_2\|_\mathbb{I}^2\,,
		\end{equation*}
		for some symmetric positive-definite linear operator $\mathbb{I}\colon X\to X^*$.
	\end{enumerate}
	We notice that by \ref{hyp:E3'} and Gronwall's lemma we can infer
	\begin{equation*}
		\mc E(t,u)+a_1\leq (\mc E(s,u)+a_1){e}^{\int_{s}^{t}b(r)\d r}\,, \quad\text{for every }0\le s\le t\le T,
	\end{equation*}
	whence
	\begin{equation}\label{rmkLip}
		|\partial_t\mc E(t,u)|\leq (\mc E(s,u)+a_1)b(t){e}^{\int_{s}^{t}b(r)\d r}\,, \quad\text{for every }0\le s\le t\le T.
	\end{equation}
	It is also easy to check that \ref{hyp:E5} implies
	\begin{equation}
		\langle D_x\mc E(t,u_1),u_2-u_1\rangle\leq \mc E(t,u_2)-\mc E(t,u_1)+\frac{\Lambda}{2}\|u_1-u_2\|_\mathbb{I}^2\,,\,\, \mbox{ for every }t\in[0,T],\, u_1,u_2\in X\,.
		\label{eq:assumption}
	\end{equation}
	Indeed, by using the mean value theorem, for some $\zeta\in [0,1]$ we have
	\begin{equation*}
		\begin{split}
			&\quad\theta \left[\mc E(t,u_2)-\mc E(t,u_1)+\frac{\Lambda}{2} (1-\theta)\|u_1-u_2\|_\mathbb{I}^2\right]\\  &\geq \mc E(t,(1-\theta)u_1+\theta u_2)-\mc E(t,u_1)
			= \theta \langle D_x\mc E(t,u_1+\zeta\theta(u_1-u_2)),u_2-u_1\rangle,
		\end{split}
	\end{equation*}
	whence \eqref{eq:assumption} follows up to simplifying $\theta$ in both sides and then letting $\theta\to0$.
	
	We finally point out that an energy $\mc E$ as in \eqref{example} always complies with \ref{hyp:E3'}, while it fulfils \ref{hyp:E5} if in addition $\mc U$ is $\Lambda$-convex.
	
	\subsection{Functions of bounded $\mathcal{R}$-variation} \label{subsec:Rvar}
	
	We recall here a suitable generalization of functions of bounded variation useful to deal with functions satisfying \ref{hyp:R1}.
	
	\begin{defi}\label{rvar}
		Given a function $f:[a,b]\to X$, we define the \emph{pointwise $\mathcal{R}$-variation} of $f$ in $[s,t]$, with $a\leq s<t\leq b$, as
		\begin{equation*}
			V_{\mathcal{R}}(f;s,t):=\sup\left\{\sum_{k=1}^n\mathcal{R}(f(t_k)-f(t_{k-1}))\mid\, s=t_0<t_1<\dots<t_{n-1}<t_n=t\right\}\,.
		\end{equation*}
		We also set $V_{\mathcal{R}}(f;t,t):=0$ for every $t\in[a,b]$.\par 
		We say that $f$ is a \emph{function of bounded $\mathcal{R}$-variation} in $[a,b]$, and we write $f\in BV_{\mathcal{R}}([a,b];X)$, if its $\mathcal{R}$-variation in $[a,b]$ is finite; i.e., $V_{\mathcal{R}}(f;a,b)<+\infty$.
	\end{defi}
	
	Notice that, by virtue of \eqref{Rbounds}, we have $f\in BV_{\mathcal{R}}([a,b];X)$ if and only if $f\in BV([a,b];X)$ in the classical sense. In particular, $f\in BV_{\mathcal{R}}([a,b];X)$ is regulated, i.e., it admits left and right limits at every $t\in[a,b]$:
	\begin{equation*}
		f^+(t):=\lim_{t_j\searrow t}f(t_j), \quad \mbox{ and }\quad f^-(t):=\lim_{t_j\nearrow t}f(t_j)\,,
	\end{equation*}
	with the convention $f^-(a):=f(a)$ and $f^+(b):=f(b)$. Moreover, its pointwise jump set $J_f$ is at most countable.
	
	{It is well known (see, e.g., \cite{AFP}) that $f$ can be uniquely decomposed as follows:
      \begin{equation}
     f=f_{\mathcal{L}}+f_{\rm Ca}+ f_{J}
      \label{eq:decompositionf}
	\end{equation}
   with $f_{\mathcal{L}}$ being an absolutely continuous function, $f_{\rm Ca}$ a continuous Cantor-type function, and $f_J$ a jump function.
If we denote by $f'$ the distributional derivative of $f\in BV_\RR([a,b];X)$, and recall that $f'$ is a Radon vector measure with finite total variation $|f'|$,} it follows that $f'$ can be decomposed into the sum of the three mutually singular measures
	\begin{equation}
		f'=f'_{\mathcal{L}}+f'_{\rm Ca}+f'_{\rm J}\,,\quad f'_{\mathcal{L}}=\dot{f}\mathcal{L}^1\,,\quad f'_{\rm co}:=f'_{\mathcal{L}}+f'_{\rm Ca}\,.
		\label{eq:decompositionm}
	\end{equation}
	In \eqref{eq:decompositionm}, $f'_{\mathcal{L}}$ is the absolutely continuous part with respect to the Lebesgue measure $\mathcal{L}^1$, whose Lebesgue density $\dot{f}$ is the usual pointwise ($\mathcal{L}^1$-a.e. defined) derivative, $f'_{\rm J}$ is the jump part concentrated on the essential jump set of $f$
	\begin{equation*}
		J^{\rm e}_f:=\{t\in [a,b]\mid f^+(t)\neq f^-(t)\}\subseteq J_f,
	\end{equation*}
	and $f'_{\rm Ca}$ is the Cantor part, such that $f'_{\rm Ca}(\{t\})=0$ for every $t\in[0,T]$. The measure $f'_{\rm co}$ is the diffuse part of the measure, and does not charge $J_f$. {The functions $f_{\mathcal{L}}$, $f_{\rm Ca}$, and $f_{J}$ in \eqref{eq:decompositionf} are exactly the distributional primitives of the measures $f'_{\mathcal{L}}$, $f'_{\rm Ca}$, and $f'_{J}$ in \eqref{eq:decompositionm}. We will use the notation $f_{\rm co}$ to denote the continuous part of $f$, that is
$f_{\rm co}=f_{\mathcal{L}}+f_{\rm Ca}$}.
	
	We also remark that, for $a\le s\le t\le b$, the function $V_{\mathcal{R}}(f;s,t)$ is monotone in both entries, hence it makes sense to consider the limits $V_{\mathcal{R}}(f;s-,t+)$. The following formula (see for instance \cite[Section 2]{MielkRosSav12}) relates $V_{\mathcal{R}}(f;s-,t+)$ with the distributional derivative of $f$, up to the jump part which is depending on the pointwise behavior of $f$. Setting $\lambda=\mathcal L^1+ |f'_{\rm Ca}|$, it namely holds
	\begin{equation}\label{eq: representation}
		V_{\mathcal{R}}(f;s-,t+)=\int_{s}^{t}	\RR\left(\frac{\d f'_{\rm co}}{\d\lambda}(r)\right)\d\lambda(r)+\!\!\!\!\sum_{r\in J_f\cap[s,t]}\left(\RR(f^+(r){-}f(r))+\RR(f(r){-}f^-(r))\right),
	\end{equation}
	where $\frac{\d f'_{\rm co}}{\d\lambda}$ is the Radon-Nikodym derivative. Observe that, by the positive one-homogeneity of $\RR$, actually any measure $\nu$ such that $f'_{\rm co}<<\nu$ can replace $\lambda$ in the integral term at the right-hand side.\\
	{It follows from \eqref{eq: representation} that  the continuous part of the $\mc R$-variation of $f$ agrees with the $\mc R$-variation of $f_{\rm co}$ and satisfies}
	\begin{equation}\label{contvar}
		V_\mc R(f_{\rm co};s,t)=\int_{s}^{t}	\RR\left(\frac{\d f'_{\rm co}}{\d\lambda}(r)\right)\d\lambda(r).
	\end{equation}
	We finally notice that by dropping the pointwise value of $f$ at jump points (by the subadditivity of $\RR$), and only considering the essential jumps, we are led to the so-called essential $\RR$-variation
	\begin{equation}\label{essvar}
		\RR(f')([s,t]):=V_\mc R(f_{\rm co};s,t)+\sum_{r\in J^{\rm e}_f\cap[s,t]}\RR(f^+(r)-f^-(r))\le V_{\mathcal{R}}(f;s-,t+).
	\end{equation}
	The term $\RR(f')$ actually defines a Radon measure {(see \cite{GofSer})}, which generalizes the concept of total variation $|f'|$ (corresponding to the particular choice $\RR(\cdot)=\norm{\,\cdot\,}$).

	\section{Inertial Balanced Viscosity and Inertial Virtual Viscosity solutions}\label{sec:IBV}
	In this section we rigorously introduce the notions of Inertial Balanced Viscosity and Inertial Virtual Viscosity solution. We also state our main result, see Theorem~\ref{mainthm}, postponing its proof to the forthcoming sections. \par 
	As in the vanishing-viscosity approach of \cite{MielkRosSav12}, the starting point consists in an alternative formulation of the dynamic problem \eqref{mainprob} based on the so--called De Giorgi's energy-dissipation principle (see the pioneering work \cite{DeG80} and other applications in \cite{Mielk, RosThom}). Roughly speaking, the idea is to keep together all the dissipative terms appearing in the dynamic model, namely viscosity and rate-independent dissipation; we are thus led to consider the functional
	\begin{equation}
		{\mc R}_\varepsilon(v):=\mc R (v) + \frac{\varepsilon}{2}|v|_{\mathbb{V}}^2\,.
		\label{scaleddissip}
	\end{equation}
	It is then easy to check (see, e.g., \cite[p. 47]{MielkRosSav12}) that the subdifferential of $\mathcal{R}_\varepsilon$ is explicitly given by
	\begin{equation*}
		\partial {\mc R}_\varepsilon(v) = \partial \mc R (v) + \varepsilon \mathbb{V}v\,,
	\end{equation*}
	so the dynamic problem \eqref{mainprob} can be rewritten as
	\begin{equation}
		\partial {\mc R}_\varepsilon (\dot{u}^\varepsilon(t))\ni - \eps^2 \mathbb{M}\xepsdd(t) - D_x\mc E(t,\xeps(t))=: w^\eps(t),\quad \mbox{for a.e. $t\in[0,T]$}\,.
		\label{mainprobequiv}
	\end{equation}
	By using \eqref{Fenchel}, and exploiting the classical chain-rule formula for $\mc E$, one obtains that the dynamic problem \eqref{mainprobequiv} is actually equivalent to the augmented energy balance
	\begin{equation}\label{augmeb}
		\begin{split}
			&\quad\frac{\varepsilon^2}{2}\|\xepsd(t)\|_{\mathbb{M}}^2+ \mc E(t,\xeps(t))+\int_s^t {\mc R}_\eps(\xepsd(r))+\RR_\eps^*(w^\eps(r))\,\mathrm{d}r \\
			&=\frac{\varepsilon^2}{2}\|\xepsd(s)\|_{\mathbb{M}}^2 +  \mc E(s,\xeps(s)) +  \int_s^t \partial_t\mc E(r,\xeps(r))\,\mathrm{d}r,\quad \text{for every } 0\le s\le t\le T.
		\end{split}
	\end{equation}

		We point out that in our case of additive viscosity \eqref{scaleddissip}, the Fenchel conjugate $\mathcal{R}^*_\varepsilon$ can be explicitly computed by means of the inf-sup convolution formula (see, e.g., \cite[§12]{Rocka}) and turns out to be 
		\begin{equation}
			\mathcal{R}^*_\varepsilon(w)=\begin{cases}\displaystyle
				\frac{1}{2\eps} \inf\limits_{\substack{z\in K^* \\ w-z\in (\ker\V)^\perp}}\scal{w-z}{\V'(w-z)},&\text{if }w\in K^*+(\ker\V)^\perp,\\
				+\infty,&\text{otherwise,}
			\end{cases}
			\label{eq:conjugates}
		\end{equation}
		where
		\begin{equation*}
			(\ker\V)^\perp=\{w\in X^*\mid \scal{w}{v}=0 \text{ for every }v\in \ker\V\},
		\end{equation*}
		{is the \emph{annihilator} of $\ker \V$} and $\V'\colon (\ker\V)^\perp\to X$ is the inverse of the operator $\V$ restricted to (the identification of) $(\ker\V)^\perp$ (in $X$).\par 
		In particular, in the two extreme situations $\V=0$ and $\V$ positive-definite we get, respectively:
		\begin{equation*}
			\RR_\eps^*(w)=\RR^*(w)=\chi_{K^*}(w),\quad\quad\RR_\eps^*(w)=	\frac{1}{2\eps}\dist^2_{\V^{-1}}(w,K^*),
		\end{equation*}
		where
		\begin{equation*}
			\dist_{\V^{-1}}(w,K^*):=\inf\limits_{z\in K^*}\|w-z\|_{\V^{-1}},
		\end{equation*}
	 	denotes the distance from $K^*$, measured with respect to the norm $\|\cdot\|_{\V^{-1}}$.

	\subsection{Contact potentials} \label{sub:contpot}
	
	The energy balance \eqref{augmeb} naturally leads to the introduction of a so-called (viscous) contact potential associated to $\V$. In the spirit of \cite{MielkRosSav12} and taking into account \eqref{eq:conjugates}, we thus define:
	\begin{defi}\label{contpot}
		The (viscous) \emph{contact potential} related to the viscosity operator $\V$ is the map $p_\V\colon X\times X^*\to [0,+\infty]$ defined as
		\begin{equation*}
			p_\V(v,w):=\inf_{\eps>0}\left(\RR_\eps(v)+\RR_\eps^*(w)\right)=\begin{cases}
				\RR(v)+|v|_\V\inf\limits_{\substack{z\in K^* \\ w-z\in (\ker\V)^\perp}}|w-z|_{\V'},&\text{if }w\in K^*+(\ker\V)^\perp,\\
				+\infty,&\text{otherwise}.
			\end{cases}
		\end{equation*}
	\end{defi}
	In the two extreme situations $\V=0$ and $\V$ positive-definite we get, respectively:
	\begin{equation}\label{explicit}
		p_0(v,w)=\RR(v)+\chi_{K^*}(w),\quad\quad p_\V(v,w)=\RR(v)+\norm{v}_\V\dist_{\V^{-1}}(w,K^*).
	\end{equation}
	Therefore, in the positive-definite case we retrieve the vanishing-viscosity contact potential defined in \cite{MielkRosSav12}. \\	
	By the explicit formula we easily infer the following properties for the contact potential $p_\V$:
	
	\begin{enumerate}
		\item $p_\V(\cdot,w)$ is positively one-homogeneous and convex, for every $w\in X^*$;
		\item $p_\V(v,\cdot)$ is convex, for every $v\in X$;
		\item $p_\V(v,w)\geq\max\{\RR(v),\langle w,v\rangle\}$, for every $v\in X$ and $w\in X^*$;
		\item $p_\V(0,w)=\chi_{K^*+(\ker\V)^\perp}(w)$, and $p_\V(v,0)=\RR(v)$.
	\end{enumerate}	
	Furthermore we also observe that:
	\begin{itemize}
		\item[(5)] $p_\V(\cdot,w)$ is symmetric for every $w\in X^*$ if and only if $\RR$ is symmetric. 
	\end{itemize}\medskip
	
	At this stage a warning is mandatory: we point out that our potential $p_\V$ in general can take the value $+\infty$, due to the semi-definiteness of the viscosity operator $\V$. This feature does not appear in \cite{MielkRosSav12}, where indeed a full viscosity is always present and the contact potential is continuous and finite. This difference will create serious issues in the forthcoming analysis, leading to the original notion of Inertial Virtual Viscosity; we are thus led to couple $p_\V$ with a ``regularized'' contact potential $p$, as follows.
	\begin{defi}\label{RCP}
		We say that a \emph{continuous} map $p\colon X\times X^*\to [0,+\infty)$ is a \emph{regularized contact potential} with respect to $p_\V$, and we write $p\in {RCP}_\V$, if:
		\begin{itemize}
			\item[(i)] $p(\cdot,w)$ is positively one-homogeneous, for every $w\in X^*$;
			\item[(ii)] $p(v,\cdot)$ is convex,  for every $v\in X$;
			\item[(iii)] $\max\{\RR(v),\langle w,v\rangle\}\leq p(v,w)\leq p_\V(v,w)$, for every $v\in X$ and $w\in X^*$;
			\item[(iv)] there exists a positive constant $L>0$ such that\begin{equation*}
				|p(v,w_1)-p(v,w_2)|\le L\norm{v}\norm{w_1-w_2}_*,\quad \text{ for every }v\in X, \text{ and }w_1,w_2\in X^*.
			\end{equation*}
		\end{itemize}
	\end{defi}
	\begin{rmk}\label{regpV}
		In the case $\V$ positive-definite, the contact potential $p_\V$ itself belongs to $RCP_\V$. This easily follows by the explicit form \eqref{explicit}. Observe that in this case $p_\V$ takes only finite values.
	\end{rmk}
	
	Notice that in the above definition we are not requiring the convexity of $p$ with respect to the variable $v$. The convexity in the second variable $(ii)$, instead, will be crucial in Proposition~\ref{propmin}.\par 
	We also point out that the main property of regularized contact potentials, missing in general for $p_\V$, is the weighted Lipschitzianity $(iv)$ with respect to the second variable: this will be heavily used in Proposition~\ref{proposizione1}.\\
	We finally observe that by $(iii)$ and $(iv)$ any $p\in RCP_\V$ satisfies
	\begin{align}\label{cont0}
		p(v,w)&\le |p(v,w)-p(v,0)|+p(v,0)\le L\norm{v}\norm{w}_*+p_\V(v,0)=L\norm{v}\norm{w}_*+\RR(v)\nonumber\\
		&\le (\alpha^*+L\norm{w}_*)\norm{v},
	\end{align}
	where we exploited \eqref{Rbounds}. In particular it holds
	\begin{equation*}
		p(0,w)=0,\quad\quad\text{for every }w\in X^*.
	\end{equation*}
	
	\subsection{Parametrized families of regularized contact potentials}\label{sub:regcontpot}
	With the following result, we show that a whole family of regularized contact potentials can be constructed by means of a suitable version of the Yosida transform.
	
	For every $\lambda\ge 1$ and every symmetric positive-definite linear operator $\U\colon X\to X^*$, we define the function $p^{\lambda,\U}_\V\colon X\times X^*\to [0,+\infty)$ as
	\begin{equation}
		p^{\lambda,\U}_\V(v,w):=\inf_{\eta\in X^*}\left\{p_\V(v,\eta)+\lambda\norm{v}_\U\norm{w-\eta}_{\U^{-1}}\right\}\,,\quad v\in X\,,\,\, w\in X^*\,.
		\label{eq:regularizedpot}	
	\end{equation}
	
	\begin{prop}\label{propYosida}
		Let $\lambda\ge 1$ and $\U$ be a symmetric positive-definite linear operator. Then $p^{\lambda,\U}_\V\in RCP_\V$.
		Furthermore, for every $v\neq 0$, one has
		\begin{equation}\label{supYosida}
			p_\V(v,w)=\sup_{\lambda\ge 1} 	p^{\lambda,\U}_\V(v,w)=\lim_{\lambda\to +\infty}	p^{\lambda,\U}_\V(v,w).
		\end{equation}
		If in addition $\RR$ is symmetric, then for every $w\in X^*$ the function $p^{\lambda,\U}_\V(\cdot,w)$ is symmetric as well.
	\end{prop}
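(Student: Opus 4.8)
The plan is to verify the four defining properties of $RCP_\V$ for $p^{\lambda,\U}_\V$ one at a time, then establish the sup/limit formula \eqref{supYosida}, and finally the symmetry statement. For property (i), one-homogeneity of $p^{\lambda,\U}_\V(\cdot,w)$ follows from the one-homogeneity of $p_\V(\cdot,\eta)$ (property (1) of $p_\V$) together with the fact that $\norm{\,\cdot\,}_\U$ is one-homogeneous: rescaling $v\mapsto tv$ for $t>0$ and substituting $\eta\mapsto t\eta$ in the infimum shows $p^{\lambda,\U}_\V(tv,w)=t\,p^{\lambda,\U}_\V(v,w)$; the case $v=0$ is immediate since both terms vanish, giving $p^{\lambda,\U}_\V(0,w)=0$. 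For property (ii), convexity of $p^{\lambda,\U}_\V(v,\cdot)$: the map $(w,\eta)\mapsto p_\V(v,\eta)+\lambda\norm{v}_\U\norm{w-\eta}_{\U^{-1}}$ is jointly convex (using convexity of $p_\V(v,\cdot)$, property (2), and convexity of the norm composed with the linear map $(w,\eta)\mapsto w-\eta$), and partial infimum over $\eta$ of a jointly convex function is convex in $w$.

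For property (iii), the lower bound $\max\{\RR(v),\langle w,v\rangle\}\le p^{\lambda,\U}_\V(v,w)$: using property (3) of $p_\V$, for any $\eta$ we have $p_\V(v,\eta)\ge \RR(v)$, hence dropping the nonnegative second term gives $p^{\lambda,\U}_\V(v,w)\ge\RR(v)$; for the bound with $\langle w,v\rangle$, estimate $p_\V(v,\eta)+\lambda\norm{v}_\U\norm{w-\eta}_{\U^{-1}}\ge \langle\eta,v\rangle+\norm{v}_\U\norm{w-\eta}_{\U^{-1}}\ge\langle\eta,v\rangle+\langle w-\eta,v\rangle=\langle w,v\rangle$, where the first inequality uses property (3) for $p_\V$ together with $\lambda\ge1$, and the second is the duality inequality between $\norm{\,\cdot\,}_\U$ and $\norm{\,\cdot\,}_{\U^{-1}}$ recalled in Section~\ref{sec:Setting}. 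The upper bound $p^{\lambda,\U}_\V(v,w)\le p_\V(v,w)$ is obtained simply by choosing $\eta=w$ in the infimum. Continuity and finiteness of $p^{\lambda,\U}_\V$ then follow: finiteness because the choice $\eta=w$ always gives a finite value when $p_\V(v,w)<\infty$, but more robustly one should pick some fixed $\eta_0\in K^*$ (so $p_\V(v,\eta_0)=\RR(v)+|v|_\V|\cdot|<\infty$, using property (4) of $p_\V$ carefully — actually $p_\V(v,\eta_0)$ is finite since $\eta_0\in K^*\subseteq K^*+(\ker\V)^\perp$), yielding $p^{\lambda,\U}_\V(v,w)\le p_\V(v,\eta_0)+\lambda\norm{v}_\U\norm{w-\eta_0}_{\U^{-1}}<\infty$ for \emph{all} $(v,w)$; continuity follows from property (iv) in the $w$-variable (proven next) plus local Lipschitzianity in $v$, which comes from one-homogeneity together with the linear growth bound this infimum inherits.

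Property (iv), the weighted Lipschitz estimate, is the heart of the matter and I expect it to be the main technical point (though it is short): given $w_1,w_2$, pick a near-optimal $\eta$ for $w_2$, i.e. $p^{\lambda,\U}_\V(v,w_2)\ge p_\V(v,\eta)+\lambda\norm{v}_\U\norm{w_2-\eta}_{\U^{-1}}-\delta$; then using this same $\eta$ as a competitor for $w_1$ and the triangle inequality for $\norm{\,\cdot\,}_{\U^{-1}}$, $p^{\lambda,\U}_\V(v,w_1)\le p_\V(v,\eta)+\lambda\norm{v}_\U\norm{w_1-\eta}_{\U^{-1}}\le p^{\lambda,\U}_\V(v,w_2)+\delta+\lambda\norm{v}_\U\norm{w_1-w_2}_{\U^{-1}}$; by symmetry in $w_1\leftrightarrow w_2$ and letting $\delta\to0$, $|p^{\lambda,\U}_\V(v,w_1)-p^{\lambda,\U}_\V(v,w_2)|\le\lambda\norm{v}_\U\norm{w_1-w_2}_{\U^{-1}}$, which after converting the $\U$-norms to the ambient norm via the constants $U,U^{-1}$ from Section~\ref{sec:Setting} gives the required form with $L:=\lambda\,\|\U\|_{\rm op}^{1/2}\,\|\U^{-1}\|_{\rm op}^{1/2}$ (or any admissible such constant). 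For the convergence formula \eqref{supYosida}: monotonicity in $\lambda$ is clear from the definition (larger $\lambda$ enlarges the penalty, hence the infimum can only increase, though one must be slightly careful — actually increasing $\lambda$ increases each term in the infimand, so indeed $\lambda\mapsto p^{\lambda,\U}_\V(v,w)$ is nondecreasing), so $\sup_\lambda=\lim_{\lambda\to\infty}$ and the sup is $\le p_\V(v,w)$ by (iii). For the reverse inequality when $v\neq0$: suppose $\lim_\lambda p^{\lambda,\U}_\V(v,w)=:\mu<p_\V(v,w)$; choosing for each $\lambda$ a near-minimizer $\eta_\lambda$, the bound $\lambda\norm{v}_\U\norm{w-\eta_\lambda}_{\U^{-1}}\le\mu+1$ with $\norm{v}_\U>0$ forces $\eta_\lambda\to w$; then by lower semicontinuity of $p_\V(v,\cdot)$ (it is convex, and finite values aside it is lsc as a conjugate-type expression — here one uses that $p_\V(v,\cdot)$ is lsc, which holds since it is the pointwise infimum defining a closed convex function, or directly from the explicit formula) one gets $\liminf_\lambda p_\V(v,\eta_\lambda)\ge p_\V(v,w)$, contradicting $p^{\lambda,\U}_\V(v,w)\ge p_\V(v,\eta_\lambda)\to$ something $\le\mu<p_\V(v,w)$. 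Hence equality holds. Finally, if $\RR$ is symmetric then $p_\V(\cdot,\eta)$ is symmetric for every $\eta$ by property (5), so replacing $v$ by $-v$ and using $\norm{-v}_\U=\norm{v}_\U$ leaves the infimand unchanged, giving symmetry of $p^{\lambda,\U}_\V(\cdot,w)$. $\qed$
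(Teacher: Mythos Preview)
Your verification of properties (i)--(iv), the sup/limit formula, and the symmetry statement are all correct and essentially match the paper's approach, including the same Lipschitz constant $L=\lambda\sqrt{\|\U\|_{\rm op}\|\U^{-1}\|_{\rm op}}$ for (iv).

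There is, however, a genuine gap in your treatment of \emph{continuity} of $p^{\lambda,\U}_\V$. You write that ``continuity follows from property (iv) in the $w$-variable \dots\ plus local Lipschitzianity in $v$, which comes from one-homogeneity together with the linear growth bound this infimum inherits''. This last claim is not valid: one-homogeneity together with a linear growth bound does \emph{not} imply continuity, let alone local Lipschitzianity. A one-homogeneous function on $\R^n$ is determined by its restriction to the unit sphere, and that restriction can be any bounded function; boundedness on the sphere (which is what linear growth gives you) does not force continuity there. Nor can you fall back on convexity in $v$: although each map $v\mapsto p_\V(v,\eta)+\lambda\|v\|_\U\|w-\eta\|_{\U^{-1}}$ is convex, the pointwise infimum over $\eta$ of convex functions need not be convex, and indeed Definition~\ref{RCP} explicitly does \emph{not} require convexity of $p(\cdot,w)$.

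The paper closes this gap with a three-step argument. First (CLAIM~1), one shows directly from the explicit formula that $p_\V$ satisfies a Lipschitz estimate in $v$ of the form $p_\V(v_1,w)\le p_\V(v_2,w)+C_1(1+\|w\|_*)\|v_1-v_2\|$. Second (CLAIM~2), for $v\neq0$ one shows that near-minimizing $\eta$'s in the definition of $p^{\lambda,\U}_\V(v,w)$ are a priori bounded by a quantity $\rho(\|v\|,\|w\|_*)$ depending on $1/\|v\|$; this follows from $\lambda\|v\|_\U\|w-\eta\|_{\U^{-1}}\le p^{\lambda,\U}_\V(v,w)+1$ together with the linear growth bound \eqref{cont0}. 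Third (CLAIM~3), combining these two facts yields a quantitative Lipschitz estimate for $p^{\lambda,\U}_\V(\cdot,w)$ away from $v=0$, while continuity at $v=0$ follows from \eqref{cont0} directly. The restriction of the infimum to bounded $\eta$'s is the essential missing ingredient in your argument: without it, the Lipschitz constant of $p_\V(\cdot,\eta)$ in CLAIM~1, which grows like $1+\|\eta\|_*$, cannot be controlled uniformly over competitors.
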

	\begin{proof}
		We first notice that $p^{\lambda,\U}_\V$ has nonnegative finite values since $p_\V$ is not identically $+\infty$ and it is nonnegative. Moreover \eqref{supYosida} is a standard property of Yosida transform (notice that $\lambda\norm{v}_\U>0$ if $v\neq 0$). Also, if $\RR$ is symmetric, symmetry of $p^{\lambda,\U}_\V(\cdot,w)$ is a straightforward byproduct of \eqref{eq:regularizedpot} since in this case $p_\V(\cdot,w)$ is symmetric.\par
		Now, we have to confirm the properties $(i)-(iv)$ of Definition~\ref{RCP}. Property $(i)$ follows by the one-homogeneity of $p_\V(\cdot,\eta)$ and of the norm.\par 
		Property $(ii)$ follows since the Yosida transform of a convex function is convex, and $p_\V (v,\cdot)$ is convex.\par 
		The right inequality in $(iii)$ is obtained by choosing $\eta=w$ in the definition of $p^{\lambda,\U}_\V$, while the left-one follows from the fact that $p_\V(v,\cdot)\ge \RR(v)$ combined with the simple inequality
		\begin{equation*}
			\scal{w}{v}=\scal{\eta}{v}+\scal{w-\eta}{v}\le p_\V(v,\eta)+\lambda\norm{v}_\U\norm{w-\eta}_{\U^{-1}}.
		\end{equation*}
		Property $(iv)$ is again a straightforward consequence of the Yosida transform: one can choose $L=\lambda\sqrt{\norm{\U}_{\rm op}\norm{\U^{-1}}_{\rm op}}$.\par
		We are only left to prove that $p^{\lambda,\U}_\V$ is continuous. We first observe that thanks to $(iv)$ it is enough to prove that $p^{\lambda,\U}_\V(\cdot,w)$ is continuous for every fixed $w\in X^*$. The continuity in $v=0$ follows easily by \eqref{cont0}; if $v\neq 0$ we need more work. We make the following claims:\\
		\textbf{CLAIM 1)} There exists a positive constant $C_1>0$ such that 
		\begin{equation}
			p_\V(v_1,w)\le p_\V(v_2,w)+C_1(1+\norm{w}_*)\norm{v_1-v_2},\quad \text{for every }v_1,\,v_2\in X,\text{ and }w\in X^*.
			\label{eq:stimapv}
		\end{equation}
		\textbf{CLAIM 2)} If $v\neq 0$, then there exists a positive constant $C_2>0$ such that 
		\begin{equation*}
			p^{\lambda,\U}_\V(v,w)=\inf\limits_{\substack{\eta\in {X^*}\\\norm{\eta}_*\le C_2 \rho(\norm{v},\norm{w}_*)}}\left\{p_\V(v,\eta)+\lambda\norm{v}_\U\norm{w-\eta}_{\U^{-1}}\right\},
		\end{equation*}
		where $\rho(\norm{v},\norm{w}_*):=1+\norm{w}_*+1/\norm{v}$.\\
		\textbf{CLAIM 3)} There exists a positive constant $C_3>0$ such that 
		\begin{equation*}
			|p^{\lambda,\U}_\V(v_1,w)-p^{\lambda,\U}_\V(v_2,w)|\le C_3 \max\{\rho(\norm{v_1},\norm{w}_*),\rho(\norm{v_2},\norm{w}_*)\}\norm{v_1-v_2},
		\end{equation*}
		for every $v_1,v_2\in X\setminus\{0\}$, and $w\in X^*$.\par
		
		From CLAIM 3) we easily deduce the continuity of $p^{\lambda,\U}_\V(\cdot,w)$ in $v\neq 0$, thus we only need to prove its validity.\par
		We start with CLAIM 1), and we observe that it is enough to prove it for $w\in K^*+(\ker\V)^\perp$. By exploiting the subaddivity of $\RR$ together with \eqref{Rbounds} and \eqref{boundedness} we easily obtain
		\begin{align*}
			p_\V(v_1,w)&=\RR(v_1)+|v_1|_\V\inf\limits_{\substack{z\in K^* \\ w-z\in (\ker\V)^\perp}}|w-z|_{\V'}\\
			&\le \RR(v_2)+\alpha^*\norm{v_1-v_2}+\big(|v_2|_\V+\sqrt{V}\norm{v_1-v_2}\big)\inf\limits_{\substack{z\in K^* \\ w-z\in (\ker\V)^\perp}}|w-z|_{\V'}\\
			&\le \RR(v_2)+|v_2|_\V\inf\limits_{\substack{z\in K^* \\ w-z\in (\ker\V)^\perp}}|w-z|_{\V'}+\big(\alpha^*+\sqrt{VV'}\norm{w}_*+\sqrt{VV'}\alpha^*\big)\norm{v_1-v_2}\\
			&=p_\V(v_2,w)+\big(\alpha^*+\sqrt{VV'}\norm{w}_*+\sqrt{VV'}\alpha^*\big)\norm{v_1-v_2},
		\end{align*}
		and CLAIM 1) is proved.\par 
		To prove CLAIM 2) it is enough to show that an infimizing sequence $\{\eta_j\}_{j\in\N}$ for $p^{\lambda,\U}_\V(v,w)$ is uniformly bounded by $\rho(\norm{v},\norm{w}_*)$, up to a multiplicative constant. Being an infimizing sequence, $\eta_j$ satisfies
		\begin{equation*}
			1+p^{\lambda,\U}_\V(v,w)\ge p_\V(v,\eta_j)+\lambda\norm{v}_\U\norm{w-\eta_j}_{\U^{-1}}\ge \norm{v}_\U\norm{w-\eta_j}_{\U^{-1}}, 
		\end{equation*}
		and thus, by using \eqref{cont0}, we infer
		\begin{align*}
			\norm{\eta_j}_*&\le \norm{w}_*+C\norm{w-\eta_j}_{\U^{-1}}\le \norm{w}_*+C \frac{1+p^{\lambda,\U}_\V(v,w)}{\norm{v}}\\
			&\le \norm{w}_*+C \frac{1+ (\alpha^*+L\norm{w}_*)\norm{v}}{\norm{v}}\le C \rho(\norm{v},\norm{w}_*).
		\end{align*}
		We now need to prove CLAIM 3). To this aim we take $\eta\in X^*$ such that $\norm{\eta}_*\le C_2 \rho(\norm{v_2},\norm{w}_*)$ and we estimate exploiting CLAIM 1):
		\begin{align*}
			p^{\lambda,\U}_\V(v_1,w)&\le p_\V(v_1,\eta)+\lambda\norm{v_1}_\U\norm{w-\eta}_{\U^{-1}}\\
			&\le p_\V(v_2,\eta)+C_1(1+\norm{w}_*)\norm{v_1{-}v_2}+\lambda \norm{v_2}_\U\norm{w{-}\eta}_{\U^{-1}}+\lambda \norm{w{-}\eta}_{\U^{-1}}\norm{v_1{-}v_2}_{\U}\\
			&\le p_\V(v_2,\eta)+\lambda \norm{v_2}_\U\norm{w{-}\eta}_{\U^{-1}}+ C\big(1+\norm{w}_*+\norm{\eta}_*\big)\norm{v_1-v_2}\\
			&\le p_\V(v_2,\eta)+\lambda \norm{v_2}_\U\norm{w{-}\eta}_{\U^{-1}}+C  \rho(\norm{v_2},\norm{w}_*)\norm{v_1-v_2}
		\end{align*}
		By using CLAIM 2), from the above inequality we deduce
		\begin{equation*}
			p^{\lambda,\U}_\V(v_1,w)\le p^{\lambda,\U}_\V(v_2,w)+C  \rho(\norm{v_2},\norm{w}_*)\norm{v_1-v_2}.
		\end{equation*}
		By interchanging the role of $v_1$ and $v_2$ we thus complete the proof of CLAIM 3) and we conclude.
	\end{proof}
	We notice that in the case $\V$ positive-definite, the contact potential $p_\V$ coincides with its Yosida approximation, if we choose $\U=\V$:
	\begin{equation}
		p^{\lambda,\V}_\V(v,w)=p_\V(v,w),\quad \text{for every }\lambda\ge 1,\,v\in X,\text{ and }w\in X^*.
	\end{equation}
	Indeed by means of the explicit formula \eqref{explicit}, it holds
	\begin{align*}
		p_\V(v,w)&\ge p^{\lambda,\V}_\V(v,w)\ge p^{1,\V}_\V(v,w)=\inf_{\eta\in X^*}\left\{p_\V(v,\eta)+\norm{v}_\V\norm{w-\eta}_{\V^{-1}}\right\}\\
		&= \inf_{\eta\in X^*}\left\{\RR(v)+\norm{v}_\V\left(\dist_{\V^{-1}}(\eta,K^*)+\norm{w-\eta}_{\V^{-1}}\right)\right\}\\
		&=\RR(v)+\norm{v}_\V\inf_{\eta\in X^*}\left\{\dist_{\V^{-1}}(\eta,K^*)+\norm{w-\eta}_{\V^{-1}}\right\}\\
		&\ge \RR(v)+\norm{v}_\V\dist_{\V^{-1}}(w,K^*)=p_\V(v,w),
	\end{align*}
	where the last inequality is a simple byproduct of the triangle inequality. This fact corroborates Remark~\ref{regpV}.\par 
	In the opposite situation $\V=0$ it is not difficult to see that the Yosida transform takes a more explicit form:
	\begin{equation}\label{virtualmotivation}
		p^{\lambda,\U}_0(v,w)=\mc R(v)+\lambda\norm{v}_\U\dist_{\U^{-1}}(w,K^*).
	\end{equation}
	Compare this last formula with \eqref{explicit}, the case of $\V$ positive-definite.

	\subsection{The inertial energy-dissipation cost}\label{sub:costfun}
	Once the notion of contact potential has been developed, we are in a position to rigorously introduce the cost function which will govern the jump transient of IBV and IVV solutions. The crucial difference with respect to the vanishing-viscosity cost of BV solutions \cite{MielkRosSav12} is its rate-dependent nature, caused by the term $\M \ddot{v}$ inside the integral which is reminiscent of the original inertial effects.
	
	\begin{defi}\label{defcost}
		For every $t\in[0,T]$ and $u_1,u_2\in X$, we define the \emph{inertial energy-dissipation cost} related to $p\in RCP_\V\cup\{p_\V\}$ as
		\begin{equation}
			c^{\mathbb{M},p}_t(u_1,u_2):=\inf\left\{\int_{-N}^{N} p(\dot{v}(r),-\mathbb{M}\ddot{v}(r)-D_x\mc E(t,v(r)))
			\,\mathrm{d}r\mid \,\, N\in\N,\, v\in V^{\mathbb{M},N}_{u_1,u_2} \right\}\,,
			\label{eq:cost}
		\end{equation}
		where
		\begin{equation*}
			V^{\mathbb{M},N}_{u_1,u_2}\!:=\!\left\{\!v\in W^{2,\infty}({-}N,N;X)\mid v({-}N)\!=\!u_1, v(N)\!=\!u_2,\, \M\dot{v}(\pm N)\!=\!0,\, \essup_{r\in [-N,N]}\|\M\ddot{v}(r)\|_* \!\le\! \overline{C}\!\right\},
		\end{equation*}
		denotes the class of the \emph{admissible curves} and $\overline{C}$ is the constant of Proposition~\ref{propbounds} and Corollary~\ref{prop:inequality1} (depending only on the data of the problem).\par 
		We also define the inertial cost directly related to the viscosity operator $\V$ by taking the supremum among the costs over all the regularized contact potentials:
		\begin{equation}
			c^{\mathbb{M},\mathbb{V}}_t(u_1,u_2):=\sup_{p\in RCP_\V} c^{\mathbb{M},p}_t(u_1,u_2).
		\end{equation}
	\end{defi}
	
	\begin{rmk}
		We point out that in the case $\V$ positive-definite Remark~\ref{regpV} yields
		\begin{equation}\label{costpv}
				c^{\mathbb{M},\mathbb{V}}_t(u_1,u_2)=	c^{\mathbb{M},p_\mathbb{V}}_t(u_1,u_2).
		\end{equation}
	This is consistent with the vanishing-viscosity analysis performed in \cite{MielkRosSav12}, in which the cost is (formally) equivalent to \eqref{costpv} by taking $\M\equiv 0$ (see \eqref{costvisc}).
	\end{rmk}
		A relevant feature of the inertial cost is that the value $c^{\mathbb{M},p}_t(u_1,u_2)$ provides an upper bound for the energy gap $\mc E(t,u_1) - \mc E(t,u_2)$ for every $p\in RCP_\V$, as shown with the following proposition.
	
	\begin{prop}\label{thmupperbound}
		For every $t\in[0,T]$ and $u_1,u_2\in X$ we have
		\begin{equation*}
			\mc E(t,u_1) - \mc E(t,u_2) \leq \inf_{p\in RCP_\V}c^{\mathbb{M},p}_t(u_1,u_2)\,.
		\end{equation*}
	\end{prop}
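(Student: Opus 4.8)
The plan is to establish the stronger pointwise bound $c^{\M,p}_t(u_1,u_2)\ge \mc E(t,u_1)-\mc E(t,u_2)$ for \emph{every} $p\in RCP_\V$, and then pass to the infimum over $p$. So fix $t\in[0,T]$, $u_1,u_2\in X$ and $p\in RCP_\V$. If $V^{\M,N}_{u_1,u_2}=\emptyset$ for every $N\in\N$, then $c^{\M,p}_t(u_1,u_2)=+\infty$ and there is nothing to prove; otherwise fix $N\in\N$ and an admissible curve $v\in V^{\M,N}_{u_1,u_2}$. Note first that, since $p$ is continuous and $\dot v$, $\ddot v$ are bounded while $r\mapsto D_x\mc E(t,v(r))$ is continuous, the integrand in \eqref{eq:cost} is bounded (e.g.\ by \eqref{cont0}), so the integral is well defined. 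The key ingredient is property $(iii)$ in Definition~\ref{RCP}: applying it with the choice $w=-\M\ddot v(r)-D_x\mc E(t,v(r))$ gives the pointwise inequality
\begin{equation*}
	p\big(\dot v(r),-\M\ddot v(r)-D_x\mc E(t,v(r))\big)\ge \scal{-\M\ddot v(r)-D_x\mc E(t,v(r))}{\dot v(r)}\,,\qquad\text{for a.e. }r\in[-N,N]\,.
\end{equation*}

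Integrating over $[-N,N]$, I would then handle the two resulting terms separately. Since $v\in W^{2,\infty}(-N,N;X)$, the map $r\mapsto \tfrac12\norm{\dot v(r)}_\M^2=\tfrac12\scal{\M\dot v(r)}{\dot v(r)}$ is Lipschitz, hence absolutely continuous, with a.e.\ derivative $\scal{\M\ddot v(r)}{\dot v(r)}$; combining the fundamental theorem of calculus with the clamping conditions $\M\dot v(\pm N)=0$ built into $V^{\M,N}_{u_1,u_2}$ (so that $\norm{\dot v(\pm N)}_\M=0$) yields
\begin{equation*}
	-\int_{-N}^{N}\scal{\M\ddot v(r)}{\dot v(r)}\d r=\tfrac12\norm{\dot v(-N)}_\M^2-\tfrac12\norm{\dot v(N)}_\M^2=0\,.
\end{equation*}
For the remaining term, assumption \ref{hyp:E2} ensures $\mc E(t,\cdot)\in C^1(X)$, hence $r\mapsto\mc E(t,v(r))$ is absolutely continuous (composition of a Lipschitz curve with a $C^1$ map on a compact set) with a.e.\ derivative $\scal{D_x\mc E(t,v(r))}{\dot v(r)}$, so that
\begin{equation*}
	-\int_{-N}^{N}\scal{D_x\mc E(t,v(r))}{\dot v(r)}\d r=\mc E(t,v(-N))-\mc E(t,v(N))=\mc E(t,u_1)-\mc E(t,u_2)\,.
\end{equation*}

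Putting the three displays together gives $\int_{-N}^{N}p(\dot v(r),-\M\ddot v(r)-D_x\mc E(t,v(r)))\d r\ge \mc E(t,u_1)-\mc E(t,u_2)$ for every admissible $v$ and every $N$; taking the infimum over $N\in\N$ and $v\in V^{\M,N}_{u_1,u_2}$ yields $c^{\M,p}_t(u_1,u_2)\ge \mc E(t,u_1)-\mc E(t,u_2)$, and finally the infimum over $p\in RCP_\V$ gives the assertion. I do not expect any real obstacle: the only points needing (routine) care are the absolute continuity of $r\mapsto\mc E(t,v(r))$ and of $r\mapsto\norm{\dot v(r)}_\M^2$, which follow from \ref{hyp:E2} and from $v\in W^{2,\infty}$ respectively. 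The conceptual heart of the argument is simply that the inertial contribution $\scal{-\M\ddot v}{\dot v}$ is an exact derivative, which is annihilated precisely by the boundary conditions $\M\dot v(\pm N)=0$ imposed in the definition of the admissible class, leaving only the telescoping energy term $\mc E(t,u_1)-\mc E(t,u_2)$.
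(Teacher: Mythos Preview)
Your proof is correct and follows essentially the same approach as the paper: fix $p\in RCP_\V$, $N$, and an admissible $v$, use property $(iii)$ of Definition~\ref{RCP} to bound the integrand below by the duality pairing, and then apply the fundamental theorem of calculus to both the kinetic term $\tfrac12\|\dot v\|_\M^2$ (which vanishes by the boundary conditions) and the potential term $\mc E(t,v(\cdot))$. The paper presents this computation slightly more compactly by writing the energy gap directly as the integral of the pairing, but the argument is identical.
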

	
	\proof
	Fix $p\in RCP_\V$ and let $N\in\N$ and $v\in V^{\mathbb{M},N}_{u_1,u_2}$. From the fundamental theorem of calculus and property $(iii)$ of regularized contact potentials we deduce:
	\begin{equation*}
		\begin{split}
			\mc E(t,u_1)-\mc E(t,u_2) & = \mc E(t,v(-N))-\mc E(t,v(N)) + \frac{1}{2} \|\dot{v}(-N)\|_{\mathbb{M}}^2 - \frac{1}{2} \|\dot{v}(N)\|_{\mathbb{M}}^2 \\
			& = \int_{-N}^{N} \langle -\mathbb{M}\ddot{v}(r)-D_x\mc{E}(t,v(r)), \dot{v}(r)\rangle\,\mathrm{d}r \\
			& \leq \int_{-N}^{N} p(\dot{v}(r),-\mathbb{M}\ddot{v}(r)-D_x\mc E(t,v(r)))
			\,\mathrm{d}r,
		\end{split}
	\end{equation*}
	and the assertion follows by the arbitrariness of $v$, $N$ and $p$.
	\endproof
	
	With the notion of inertial energy-dissipation cost at hand we can give the definition of Inertial Virtual Viscosity and Inertial Balanced Viscosity solutions.
	
	\begin{defi}
		We say that a function $u\in BV_\mc R([0,T];X)$ is an \emph{Inertial Virtual Viscosity} (IVV) solution to the rate-independent system \eqref{quasistprob}, related to $\M$ and $\V$, if it complies both with the local stability condition
		\begin{equation}\label{locstab}
			-D_x\mc E(t,u(t))\in K^*,\quad\text{for every }t\in [0,T]\setminus J_u,
		\end{equation}
		and the energy balance 
		\begin{equation}\label{eb}
			\mc E(t,u^+(t))+V_\RR(u_{\rm co};s,t)+\sum_{r\in J^{\rm e}_u\cap[s,t]}\!\!\! c^{\M,\V}_r(u^-(r),u^+(r))=	\mc E(s,u^-(s))+\int_{s}^{t}\partial_t\mc E(r,u(r))\d r\,,
		\end{equation}
		for every $0\le s\le t\le T$.\par 
		If $\V$ is positive-definite, in which case \eqref{eb} is satisfied with $c^{\M,p_\mathbb{V}}$ in place of $c^{\M,\mathbb{V}}$ (see \eqref{costpv}), we say that $u$ is an \emph{Inertial Balanced Viscosity} (IBV) solution.
	\end{defi}
	\begin{rmk}
		By Proposition~\ref{thmupperbound} we deduce that for any IVV solution there holds
		\begin{equation*}
			c^{\M,\V}_t(u^-(t),u^+(t))=\sup_{p\in RCP_\V}c^{\mathbb{M},p}_t(u^-(t),u^+(t))=\inf_{p\in RCP_\V}c^{\mathbb{M},p}_t(u^-(t),u^+(t)),\text{ for every }t\in [0,T],
		\end{equation*}
		and thus in \eqref{eb} we can actually replace $c^{\M,\V}$ with $c^{\mathbb{M},p}$ for an arbitrary $p\in RCP_\V$.\par 
		It is however not clear whether we can replace it with the cost related to the contact potential $p_\V$ itself (i.e. $c^{\mathbb{M},p_\V}$) in the case of a generic $\V$ semidefinite, despite Proposition~\ref{propYosida} shows that $p_\V$ can be always approximated (except for $v=0$) by suitable regularized contact potentials.
	\end{rmk}
	The term ``virtual'' in the definition of IVV solutions is motivated by the presence of the Yosida-type potentials \eqref{eq:regularizedpot} inside the set of regularized contact potentials $RCP_\V$. They are indeed constructed by means of a symmetric positive-definite linear operator $\mathbb{U}$, which plays the role of a virtual viscosity, since a priori it is not present in the problem under study (see in particular \eqref{virtualmotivation}). The term ``balanced'' for IBV solutions is instead inherited from \cite{MielkRosSav12}.
	
	The main result of the paper can now be stated as follows.
	\begin{thm}\label{mainthm}
		Let $\M,\V$ satisfy \eqref{mass}, \eqref{viscosity} and assume \ref{hyp:E1}--\ref{hyp:E4}, and \ref{hyp:R1}. Let $u_0^\varepsilon\to u_0$, $\eps u_1^\eps\to 0$. Then the following two assertions hold true:
		\begin{itemize}
			\item[\rm (I)] for every sequence $\epsj\to 0$ there exists a subsequence (not relabelled) along which the sequence of dynamic solutions $\xepsj$ to \eqref{mainprob} pointwise converges to an Inertial Virtual Viscosity solution of the rate-independent system \eqref{quasistprob};
			\item[\rm (II)] assume in addition \ref{hyp:E3'} and \ref{hyp:E5}; then for every sequence $(\tau_j,\epsj)\to (0,0)$ satisfying
			\begin{equation*}
				\sup\limits_{j\in\N}\frac{\tau_j}{\varepsilon_j^2}<+\infty,
			\end{equation*} 
			there exists a subsequence (not relabelled) along which the sequence of piecewise affine interpolants $\widehat{u}_{\tau_j,\epsj}$, defined in \eqref{affineinterpolant} and coming from the Minimizing Movements scheme \eqref{schemeincond}, pointwise converges to an Inertial Virtual Viscosity solution of the rate-independent system \eqref{quasistprob}.
		\end{itemize} 
		In both cases, the limit function is an Inertial Balanced Viscosity solution if $\V$ is positive-definite.
	\end{thm}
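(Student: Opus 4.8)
The proof of Theorem~\ref{mainthm} splits into the two parallel approximation schemes, the time-continuous one \eqref{mainprob} for (I) and the time-discrete one \eqref{schemeintro} for (II), but both follow the same conceptual skeleton, which mirrors the De Giorgi energy-dissipation principle encoded in \eqref{augmeb}. The plan is to produce, for each $\eps$ (and in case (II) also each $\tau$), an approximate energy-dissipation inequality of the form
\begin{equation*}
\frac{\eps^2}{2}\|\dot u^\eps(t)\|_\M^2+\mc E(t,u^\eps(t))+\int_s^t\bigl(\mc R_\eps(\dot u^\eps)+\mc R_\eps^*(w^\eps)\bigr)\d r\le \frac{\eps^2}{2}\|\dot u^\eps(s)\|_\M^2+\mc E(s,u^\eps(s))+\int_s^t\partial_t\mc E(r,u^\eps(r))\d r+o(1),
\end{equation*}
then derive from it a priori bounds: a uniform bound on the $\mc R$-variation of $u^\eps$ on $[0,T]$ (so that $u^\eps$ lies in a compact set of $BV$ by Helly's theorem), a uniform bound on the kinetic energy $\eps^2\|\dot u^\eps\|_\M^2$ at each time, and — crucially for the jump analysis — a uniform $L^2$-type bound on the viscous dissipation and on $\eps\ddot u^\eps$ near jumps, which is where the constant $\overline C$ appearing in the admissible-curve class $V^{\M,N}_{u_1,u_2}$ gets fixed (these are the contents of the cited Proposition~\ref{propbounds} and Corollary~\ref{prop:inequality1}). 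First I would extract a pointwise-convergent subsequence $u^{\epsj}\to u$ with $u\in BV_\mc R([0,T];X)$, and pass to the limit in the local stability: since $-D_x\mc E(t,u^\eps(t))-\eps^2\M\ddot u^\eps(t)\in K^*+\eps\V\dot u^\eps(t)$ at a.e.\ $t$, continuity of $D_x\mc E$, closedness of $K^*$, and the vanishing of the inertial and viscous terms in a suitable averaged sense yield $-D_x\mc E(t,u(t))\in K^*$ for $t\notin J_u$, i.e.\ \eqref{locstab}.

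**The energy balance.** The heart of the argument is \eqref{eb}. The $\le$ inequality (energy-dissipation upper bound) should come from lower semicontinuity: by the chain rule for $BV$ functions composed with $\mc E$ (Remark~\ref{rmkE} guarantees $D_x\mc E(\cdot,u)$ is absolutely continuous, enabling the chain-rule formula of \cite{CrDeCicco}), the limit of the left-hand sides of the approximate balances dominates $\mc E(t,u^+(t))+\RR(u')([s,t])$ on the continuous part, while on each essential jump $r\in J^{\rm e}_u$ one must show that the localized viscous dissipation $\int_{\text{near }r}(\mc R_\eps(\dot u^\eps)+\mc R_\eps^*(w^\eps))\d r$ in the limit is at least $c^{\M,p}_r(u^-(r),u^+(r))$ for every $p\in RCP_\V$; after rescaling time around the jump (so that the slow external time freezes at $r$ and the fast dynamics $\eps^2\M\ddot v+\eps\V\dot v+\partial\mc R(\dot v)+D_x\mc E(r,v)\ni 0$ emerges on an expanding window $[-N,N]$), the integrand converges to $p_\V(\dot v,-\M\ddot v-D_x\mc E(r,v))\ge p(\dot v,\cdot)$, and taking the infimum over admissible $v$ and then the supremum over $p\in RCP_\V$ gives exactly $c^{\M,\V}_r$. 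For this one needs the a priori $L^\infty$ bound $\|\M\ddot v\|_*\le\overline C$ to hold along the rescaled curves, which is precisely why $\overline C$ is baked into $V^{\M,N}_{u_1,u_2}$. The reverse inequality $\ge$ is the subtler direction: here I would invoke Proposition~\ref{thmupperbound}, which already gives $\mc E(r,u^-(r))-\mc E(r,u^+(r))\le\inf_p c^{\M,p}_r$, together with a gluing/upper-energy-estimate argument — constructing competitor transitions that realize the cost — to upgrade the lower-semicontinuity inequality to an equality on $[0,T]$ and on each subinterval $[s,t]$. Since left and right limits are used throughout, the balance is representative-independent, as emphasized in the introduction.

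**The discrete scheme (part II).** For (II) the additional work is to convert the minimality of $u^k_{\tau,\eps}$ in \eqref{schemeintro} into a discrete energy-dissipation inequality. Comparing the minimizer with the competitor $x=u^{k-1}_{\tau,\eps}$ yields a one-step inequality; summing over $k$, using the $\Lambda$-convexity \ref{hyp:E5} via \eqref{eq:assumption} to control the discrete second-difference error terms (this is exactly where the estimate on the rest terms mentioned in the introduction enters, and where $\sup_j\tau_j/\eps_j^2<+\infty$ is needed so that these $O(\tau/\eps^2)$ remainders stay bounded), and using \ref{hyp:E3'} with Gronwall (giving \eqref{rmkLip}) to handle the $\partial_t\mc E$ terms, produces the approximate balance for the piecewise-affine interpolant $\widehat u_{\tau_j,\epsj}$. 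From there the analysis is identical to (I): a priori $BV$-bound, Helly extraction, passage to the limit in stability and in the energy balance via the same rescaling-at-jumps mechanism (the discrete Euler-Lagrange equation of \eqref{schemeintro} is a finite-difference version of \eqref{mainprob}, so the rescaled interpolants solve an approximation of the same limiting fast system). The main obstacle throughout, in both parts, is the jump analysis: showing that the limiting localized dissipation is not merely bounded below by \emph{some} transition cost but exactly matches $c^{\M,\V}_r$ — i.e.\ that no dissipation is "lost" and that the optimal profile is attainable within the class $V^{\M,N}_{u_1,u_2}$ with the prescribed boundary conditions $\M\dot v(\pm N)=0$ — and, on the side of the regularized potentials, that the supremum over $p\in RCP_\V$ and the infimum over curves interchange correctly so that the two a priori distinct objects $c^{\M,\V}_r$ and $\inf_p c^{\M,p}_r$ coincide along the limit, as noted in the remark following the definition of IVV solutions.
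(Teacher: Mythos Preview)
Your overall strategy matches the paper's: a priori bounds from the energy identity (Proposition~\ref{propbounds} / Corollary~\ref{prop:inequality1}), Helly extraction, local stability by passing to the limit in the Euler--Lagrange inclusion, and the rescaling argument around each jump (Proposition~\ref{proposizione1}) to identify the inertial cost. Your description of the rescaling step and of the role of the bound $\overline C$ on $\eps^2\|\M\ddot u^\eps\|_*$ is accurate, and your account of part~(II) correctly locates where \ref{hyp:E5} and \ref{hyp:E3'} enter.

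Where the proposal is muddled is the organization of the energy balance \eqref{eb}, in particular the role of the chain rule and the ``$\ge$'' direction. In the paper the argument does not proceed by proving two opposite inequalities via lower semicontinuity on one side and a ``competitor construction'' on the other. Instead, Proposition~\ref{propatomic} first establishes an \emph{identity}: monotonicity of $t\mapsto \mc E(t,u^+(t))-\int_0^t\partial_t\mc E$ produces a positive Radon measure $\mu$ with $\mc E(t,u^+(t))+\mu([s,t])=\mc E(s,u^-(s))+\int_s^t\partial_t\mc E$; the $BV$ chain rule combined with the local stability \eqref{LS+} then identifies the diffuse part of $\mu$ as exactly $V_\mc R(u_{\rm co};s,t)$, so that the atoms $\mu(\{r\})$ equal the energy gaps $\mc E(r,u^-(r))-\mc E(r,u^+(r))$. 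It only remains to show each gap equals $c^{\M,\V}_r$: the upper bound is Proposition~\ref{thmupperbound}, the lower bound is the rescaling argument of Proposition~\ref{proposizione1}. No construction of optimal transitions is needed --- your ``gluing/competitor'' step is superfluous, and the chain rule belongs in the identification of the diffuse part of $\mu$, not in a lower-semicontinuity estimate. A minor correction for part~(II): the condition $\sup_j\tau_j/\eps_j^2<\infty$ is not used to control remainder terms in the discrete energy inequality (there $\tau/\eps$ bounded, cf.~\eqref{largeratio}, suffices) but rather in the jump analysis of Proposition~\ref{proposizione1bis}, via \eqref{l2convtris}, to bound the mismatch between the piecewise-affine and piecewise-quadratic interpolants when performing the rescaling.
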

	
	The proof of part (I) is carried out in Section~\ref{sec:slowload}, while part (II) is proved in Section~\ref{sec:discrete}. The rest of this section is devoted to the main properties of the inertial cost.
	
	\begin{prop}\label{propmin}
		Fix $t\in [0,T]$, $u_1,u_2\in X$ and $p\in RCP_\V$. 
		Then the inertial energy-dissipation cost related to $p$ can be computed as follows:
		\begin{equation}\label{limminN}
			c^{\mathbb{M},p}_t(u_1,u_2)=\lim\limits_{N\to +\infty}\min_{v\in V^{\mathbb{M},N}_{u_1,u_2}}\int_{-N}^{N} p(\dot{v}(r),-\mathbb{M}\ddot{v}(r)-D_x\mc E(t,v(r)))
			\,\mathrm{d}r.
		\end{equation}
	\end{prop}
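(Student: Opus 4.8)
The statement claims that the infimum defining $c^{\mathbb{M},p}_t(u_1,u_2)$ over all $N\in\mathbb{N}$ and all admissible curves is in fact attained as an increasing limit of minima over the finite-horizon classes $V^{\mathbb{M},N}_{u_1,u_2}$. Two things must be established: first, that for each fixed $N$ the finite-horizon infimum is actually a minimum; second, that the map $N\mapsto \min_{v\in V^{\mathbb{M},N}_{u_1,u_2}}\int_{-N}^N p(\dot v,-\mathbb{M}\ddot v - D_x\mathcal{E}(t,v))\,\mathrm{d}r$ is nonincreasing, so that the infimum over $N$ equals the limit as $N\to+\infty$.

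\textbf{Step 1 (existence of a minimizer for fixed $N$).} I would fix $N$ and take a minimizing sequence $\{v_k\}\subset V^{\mathbb{M},N}_{u_1,u_2}$. The essential-sup constraint $\|\mathbb{M}\ddot v_k(r)\|_*\le\overline C$ gives, since $\mathbb{M}$ is positive-definite, a uniform $L^\infty$ bound on $\ddot v_k$; combined with the fixed boundary values $v_k(-N)=u_1$, $v_k(N)=u_2$ this yields uniform bounds on $v_k$ and $\dot v_k$ in $W^{2,\infty}(-N,N;X)$. By Ascoli–Arzelà (in finite dimension) I extract a subsequence with $v_k\to v$ in $C^1$ and $\ddot v_k\rightharpoonup^* \ddot v$ weakly-$*$ in $L^\infty$; the constraints $v(\pm N)=u_1,u_2$, $\mathbb{M}\dot v(\pm N)=0$, and $\|\mathbb{M}\ddot v(r)\|_*\le\overline C$ pass to the limit, so $v\in V^{\mathbb{M},N}_{u_1,u_2}$. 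Lower semicontinuity of the integral functional along this convergence is the key analytic point: since $p(v,\cdot)$ is convex (property (ii) of $RCP_\V$) and $p$ is continuous, the integrand is a normal convex integrand in the variable $-\mathbb{M}\ddot v - D_x\mathcal{E}(t,v)$, with the $D_x\mathcal{E}(t,v_k)\to D_x\mathcal{E}(t,v)$ term converging uniformly by (E2), and $\dot v_k\to\dot v$ uniformly; standard weak lower semicontinuity for convex integrands (Ioffe-type) then gives $\int p(\dot v,-\mathbb{M}\ddot v - D_x\mathcal{E}(t,v))\le\liminf_k\int p(\dot v_k,\cdots)$, so $v$ is a minimizer.

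\textbf{Step 2 (monotonicity in $N$).} Given $N'>N$ and an admissible $v\in V^{\mathbb{M},N}_{u_1,u_2}$, I extend it to $[-N',N']$ by setting $v\equiv u_1$ on $[-N',-N]$ and $v\equiv u_2$ on $[N,N']$. On these added intervals $\dot v\equiv 0$ and $\ddot v\equiv 0$, so the constraints $\mathbb{M}\dot v(\pm N')=0$ and $\|\mathbb{M}\ddot v\|_*\le\overline C$ hold there trivially, and $\mathbb{M}\dot v$ is continuous at $\pm N$ because $\mathbb{M}\dot v(\pm N)=0$ already; hence the extension lies in $V^{\mathbb{M},N'}_{u_1,u_2}$. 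On the added intervals the integrand equals $p(0,-D_x\mathcal{E}(t,u_i))$, which vanishes by the identity $p(0,w)=0$ recorded in \eqref{cont0}. Therefore the value of the functional is unchanged, giving $\min_{V^{\mathbb{M},N'}}\le\min_{V^{\mathbb{M},N}}$; the sequence of minima is nonincreasing and bounded below by $0$, so its limit exists and equals $\inf_{N}$, which by definition is $c^{\mathbb{M},p}_t(u_1,u_2)$.

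\textbf{Main obstacle.} The delicate point is the lower semicontinuity in Step 1: the integrand $p(\dot v(r),\,-\mathbb{M}\ddot v(r)-D_x\mathcal{E}(t,v(r)))$ is convex only in the \emph{second} slot, while $\dot v_k$ appears in the first slot (where $p$ need not be convex) and also multiplies effectively through the one-homogeneity and the weighted-Lipschitz bound \eqref{cont0}. The uniform $C^1$ convergence $\dot v_k\to\dot v$ is exactly what rescues this — I would freeze the first argument at its uniform limit and control the error via property (iv) of $RCP_\V$, $|p(\dot v_k,\eta)-p(\dot v,\eta)|\le$ (bound on $\eta$) $\cdot\|\dot v_k-\dot v\|$, the $\eta$-argument being uniformly bounded by the $\overline C$-constraint and the continuity of $D_x\mathcal{E}$. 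After this reduction only convexity in $\eta$ together with the weak-$*$ convergence $\mathbb{M}\ddot v_k\rightharpoonup^*\mathbb{M}\ddot v$ is needed, which is the classical convex lower-semicontinuity theorem. This is presumably also where the role of the \emph{a priori} constant $\overline C$ (fixed by Proposition~\ref{propbounds} and Corollary~\ref{prop:inequality1}) becomes essential, since without an equi-bound on $\ddot v_k$ the extracted limit could fail to be admissible and the functional could drop in the limit.
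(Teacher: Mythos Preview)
Your proposal follows essentially the same two-step strategy as the paper's proof: direct method for fixed $N$ (compactness from the $\overline C$-bound, then lower semicontinuity by freezing the first argument and using convexity in the second), followed by monotonicity in $N$ via constant extension and $p(0,w)=0$.

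There is one slip worth flagging. In your ``Main obstacle'' paragraph you write that the error $|p(\dot v_k,\eta)-p(\dot v,\eta)|$ is controlled ``via property (iv) of $RCP_\V$''. But property (iv) is Lipschitz continuity in the \emph{second} variable, not the first; it says nothing about $|p(v_1,w)-p(v_2,w)|$ for general $p\in RCP_\V$. The paper handles this term instead by using the mere \emph{continuity} of $p$ (part of Definition~\ref{RCP}): since $(\dot v_k(r),w_k(r))$ stays in a fixed compact subset of $X\times X^*$, one takes a modulus of continuity $\omega$ for $p$ on that compact set and bounds the error by $\omega(\|\dot v_k(r)-\dot v(r)\|)$, which tends to zero uniformly by the $C^1$ convergence. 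Your argument goes through once you make this correction; the rest is identical to the paper.
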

	\begin{proof}
		For a fixed $N\in \N$, let $\{v_j\}_{j\in\N}\subseteq V^{\mathbb{M},N}_{u_1,u_2}$ be an infimizing sequence for
		\begin{equation}\label{infN}
			\inf_{v\in V^{\mathbb{M},N}_{u_1,u_2}}\int_{-N}^{N} p(\dot{v}(r),-\mathbb{M}\ddot{v}(r)-D_x\mc E(t,v(r)))
			\,\mathrm{d}r\,.
		\end{equation}
		
		By the definition of $V^{\mathbb{M},N}_{u_1,u_2}$, especially from the bound on the second derivative, we deduce that, up to a not relabelled subsequence, it holds
		\begin{equation*}
			v_j\wto v\quad\text{ weakly in }W^{2,2}(-N,N;X),\quad\text{for some }v\in V^{\mathbb{M},N}_{u_1,u_2}.
		\end{equation*}
		Fo the sake of clarity we introduce the following notation:
		\begin{equation*}
			w_j:=-\M\ddot{v}_j-D_x\mc E(t,v_j),\quad w:=-\M\ddot{v}-D_x\mc E(t,v),
		\end{equation*}
		and we notice that 
		\begin{equation*}
			w_j\wto w,\quad\text{weakly in }L^2(-N,N;X^*).
		\end{equation*}
		By $(ii)$ and $(iv)$ of Definition~\ref{RCP}, we observe that the map $w\mapsto \int_{-N}^{N} p(\dot{v}(r),w(r))\,\mathrm{d}r$ is convex and strongly continuous in $L^2(-N,N;X^*)$, and thus weakly lower semicontinuous. Hence we get
		\begin{align*}
			&\quad\int_{-N}^{N} p(\dot{v}(r),w(r))\,\mathrm{d}r\le \liminf_{j\to +\infty}\int_{-N}^{N} p(\dot{v}(r),w_j(r))\,\mathrm{d}r\\
			&\le \liminf_{j\to +\infty}\int_{-N}^{N} p(\dot{v}_j(r),w_j(r))\,\mathrm{d}r+\limsup_{j\to +\infty}\int_{-N}^{N}|p(\dot{v}_j(r),w_j(r))-p(\dot{v}(r),w_j(r))|\,\mathrm{d}r.
		\end{align*}
		Since $\{v_j\}_{j\in\N}$ is an infimizing sequence, we conclude that the minimum in \eqref{infN} is attained if we prove that the last term in the above estimate vanishes. To this aim we first notice that for almost every $r\in [-N,N]$ the sequence $(\dot{v}_j(r),w_j(r))$ is  contained in a compact subset $ K$ of $X\times X^*$; then let $\omega$ be a modulus of continuity of $p$ in $ K$. We thus obtain
		\begin{align*}
			\limsup_{j\to +\infty}\int_{-N}^{N}|p(\dot{v}_j(r),w_j(r))-p(\dot{v}(r),w_j(r))|\,\mathrm{d}r&\le \limsup_{j\to +\infty}\int_{-N}^{N}\omega(\norm{\dot{v}_j(r)-\dot{v}(r)})\d r,
		\end{align*}
		which vanishes since $\dot{v}_j\wto \dot{v}$ weakly in $W^{1,2}(-N,N;X)$, and thus strongly in $C^0([-N,N];X)$.\\
		To obtain formula \eqref{limminN} we simply notice that the map $$N\mapsto \min_{v\in V^{\mathbb{M},N}_{u_1,u_2}}\int_{-N}^{N} p(\dot{v}(r),-\mathbb{M}\ddot{v}(r)-D_x\mc E(t,v(r)))
		\,\mathrm{d}r,$$
		is nonincreasing. Indeed if $N\le M$, any minimizer $v_N$ in $[-N,N]$ can be trivially extended constant to $[-M,M]$, thus obtaining a competitor in $V^{\mathbb{M},M}_{u_1,u_2}$ (we recall that $p(0,w)=0$ for every $w\in X^*$). Hence \eqref{limminN} follows by the very definition of the inertial energy-dissipation cost \eqref{eq:cost}. 
	\end{proof}
	
	With the following Proposition, we prove that the cost function $c^{\mathbb{M},p}_t$ is a (possibly asymmetric) distance. We point out that in the vanishing-viscosity setting of \cite{MielkRosSav12} this distance is induced by a Finsler metric $F(u,\dot{u})$; in our case, the presence of inertia destroys this additional structure.
	
	\begin{prop}\label{propertiescost}
		For every $t\in[0,T]$, $u_1,u_2,u_3\in X$ and $p\in RCP_\V$ we have:
		\begin{itemize}
			\item[(1)] $c^{\mathbb{M},p}_t(u_1,u_2)=0$ if and only if $u_1=u_2$;
			\item[(2)] $c^{\mathbb{M},p}_t(u_1,u_2)\le c^{\mathbb{M},p}_t(u_1,u_3)+c^{\mathbb{M},p}_t(u_3,u_2)$.
		\end{itemize}
		If in addition $p(\cdot,w)$ is symmetric for every $w\in X^*$ (see for instance Proposition~\ref{propYosida}), then the cost is symmetric, i.e.:
		\begin{itemize}
			\item[(3)] $c^{\mathbb{M},p}_t(u_1,u_2)=c^{\mathbb{M},p}_t(u_2,u_1)$.
		\end{itemize}
	\end{prop}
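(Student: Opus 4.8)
\textbf{Proof strategy for Proposition~\ref{propertiescost}.}

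The plan is to verify the three properties in turn, relying on Proposition~\ref{propmin} (so that infima are attained by minimizers in each $V^{\mathbb{M},N}_{u_1,u_2}$), on Proposition~\ref{thmupperbound}, and on the structure of the admissible class with its clamped velocity boundary conditions $\M\dot v(\pm N)=0$, which is exactly what makes concatenation of curves admissible.

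\emph{Property (1).} If $u_1=u_2$ the constant curve $v\equiv u_1$ lies in $V^{\mathbb{M},N}_{u_1,u_1}$ for every $N$, and since $p(0,w)=0$ for all $w$ (by \eqref{cont0}), the corresponding integral is $0$; hence $c^{\mathbb{M},p}_t(u_1,u_1)=0$. Conversely, suppose $c^{\mathbb{M},p}_t(u_1,u_2)=0$. Using property $(iii)$ of regularized contact potentials, $p(v,w)\ge\RR(v)\ge\alpha_*\|v\|$, so for any admissible $v$ we get $\int_{-N}^N p(\dot v,\cdot)\,\mathrm dr\ge\alpha_*\int_{-N}^N\|\dot v(r)\|\,\mathrm dr\ge\alpha_*\|v(N)-v(-N)\|=\alpha_*\|u_2-u_1\|$. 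Thus $c^{\mathbb{M},p}_t(u_1,u_2)\ge\alpha_*\|u_1-u_2\|$, which forces $u_1=u_2$.

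\emph{Property (2) (triangle inequality).} Fix $\delta>0$. By Proposition~\ref{propmin} choose $N_1,N_2\in\N$ and minimizers $v_1\in V^{\mathbb{M},N_1}_{u_1,u_3}$, $v_2\in V^{\mathbb{M},N_2}_{u_3,u_2}$ with energies within $\delta$ of $c^{\mathbb{M},p}_t(u_1,u_3)$ and $c^{\mathbb{M},p}_t(u_3,u_2)$ respectively. The idea is to glue $v_1$ and $v_2$ at the common value $u_3$: since $\M\dot v_1(N_1)=0=\M\dot v_2(-N_2)$ and $\M$ is invertible, also $\dot v_1(N_1)=0=\dot v_2(-N_2)$, so the concatenation $v$ defined on $[-N_1,N_1]$ as $v_1$ and on $[N_1,N_1+2N_2]$ as $v_2(\cdot-N_1-N_2)$ is $W^{2,\infty}$ up to the junction, with $\dot v$ continuous (both one-sided velocities vanish) and $\M\ddot v$ still bounded by $\overline C$ in $L^\infty$. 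After translating the time interval so it is symmetric, $v\in V^{\mathbb{M},N_1+N_2}_{u_1,u_2}$, and its cost is the sum of the two costs, hence $\le c^{\mathbb{M},p}_t(u_1,u_3)+c^{\mathbb{M},p}_t(u_3,u_2)+2\delta$. Letting $\delta\to0$ gives (2). (If instead one wants to avoid invoking Proposition~\ref{propmin}, the same gluing works directly with near-infimizers from the definition \eqref{eq:cost}.)

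\emph{Property (3) (symmetry).} Assume $p(\cdot,w)$ is symmetric, i.e. $p(-v,w)=p(v,w)$ for all $v,w$. Given an admissible $v\in V^{\mathbb{M},N}_{u_1,u_2}$, consider the time-reversed curve $\tilde v(r):=v(-r)$ on $[-N,N]$. Then $\tilde v(-N)=u_2$, $\tilde v(N)=u_1$, $\dot{\tilde v}(r)=-\dot v(-r)$, $\ddot{\tilde v}(r)=\ddot v(-r)$, so $\M\dot{\tilde v}(\pm N)=0$ and $\|\M\ddot{\tilde v}\|_*\le\overline C$, giving $\tilde v\in V^{\mathbb{M},N}_{u_2,u_1}$. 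Its cost is
\begin{equation*}
\int_{-N}^{N}p\big(-\dot v(-r),-\M\ddot v(-r)-D_x\mc E(t,v(-r))\big)\,\mathrm dr=\int_{-N}^{N}p\big(-\dot v(s),-\M\ddot v(s)-D_x\mc E(t,v(s))\big)\,\mathrm ds,
\end{equation*}
after the substitution $s=-r$, and by the assumed symmetry of $p$ in its first argument this equals the cost of $v$. Taking infima over admissible curves in both directions yields $c^{\mathbb{M},p}_t(u_1,u_2)=c^{\mathbb{M},p}_t(u_2,u_1)$.

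\emph{Main obstacle.} The only genuinely delicate point is the gluing in (2): one must check that the concatenated curve really belongs to $V^{\mathbb{M},N_1+N_2}_{u_1,u_2}$, in particular that the clamped-velocity conditions at the endpoints of each piece make $\dot v$ continuous across the junction and that the uniform bound $\essup\|\M\ddot v\|_*\le\overline C$ is preserved (it is, since it holds piecewise and the set where the two pieces overlap is a single point, of measure zero). Everything else is an immediate consequence of $p(0,\cdot)=0$, the lower bound $p\ge\RR$, and elementary changes of variables.
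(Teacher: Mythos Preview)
Your proof is correct and follows essentially the same approach as the paper: constant curve and the lower bound $p\ge\RR$ for (1), concatenation exploiting the clamped-velocity boundary conditions for (2), and time reversal for (3). The only cosmetic difference is that in (2) the paper works directly with arbitrary competitors $v_1,v_2$ (no $\delta$ or appeal to Proposition~\ref{propmin} needed, as you yourself note parenthetically), and in (1) the paper uses Jensen with $\RR$ rather than passing through $\alpha_*\|\cdot\|$.
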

	
	\begin{proof}
		We start proving $(1)$. If $u_1=u_2$, then the constant function is an admissible competitor. Thus, $c^{\mathbb{M},p}_t(u_1,u_2)=0$ since $p(0,w)=0$. On the other hand, if $u_1\neq u_2$, then for every $N\in \N$ and $v\in V^{\mathbb{M},N}_{u_1,u_2}$ by exploiting $(iii)$ in Definition~\ref{RCP}, Jensen's inequality and the one-homogeneity of $\RR$ we have
		\begin{align*}
			\int_{-N}^{N} p(\dot{v}(r),-\mathbb{M}\ddot{v}(r)-D_x\mc E(t,v(r)))
			\,\mathrm{d}r\ge 	\int_{-N}^{N} \RR(\dot v(r))\d r\ge \mc R\left(\int_{-N}^{N}\dot v(r)\d r\right)=\mc R(u_2-u_1)\,.
		\end{align*}
		Hence we get
		\begin{equation*}
			c^{\mathbb{M},p}_t(u_1,u_2)\ge R(u_2-u_1)>0,
		\end{equation*}
		and $(1)$ is proved.\par 
		To show the validity of $(2)$ we fix $N_1,N_2\in \N$ and $v_1\in V^{\mathbb{M},N_1}_{u_1,u_3}$, $v_2\in V^{\mathbb{M},N_2}_{u_3,u_2}$. It is then easy to see that the concatenation
		\begin{equation*}
			v_3(s):=\begin{cases}
				v_1(s+N_2),&\text{if }s\in [-N_1-N_2, N_1-N_2],\\
				v_2(s-N_1),&\text{if }s\in (N_1-N_2,N_1+N_2],
			\end{cases}
		\end{equation*}
		belongs to $ V^{\mathbb{M},N_1+N_2}_{u_1,u_2}$, and that there holds
		\begin{align*}
			&\quad\int_{-N_1-N_2}^{N_1+N_2} p(\dot{v}_3(s),-\mathbb{M}\ddot{v}_3(s)-D_x\mc E(t,v_3(s)))
			\,\mathrm{d}s\\
			&=	\int_{-N_1}^{N_1} p(\dot{v}_1(r),-\mathbb{M}\ddot{v}_1(r)-D_x\mc E(t,v_1(r)))
			\,\mathrm{d}r+\int_{-N_2}^{N_2} p(\dot{v}_2(r),-\mathbb{M}\ddot{v}_2(r)-D_x\mc E(t,v_2(r)))
			\,\mathrm{d}r.
		\end{align*}
		This yields $(2)$.\par 
		To prove $(3)$ we fix $N\in \N$ and $v\in V^{\mathbb{M},N}_{u_2,u_1}$, and we observe that the backward function $\check{v}(r):=v(-r)$ belongs to $V^{\mathbb{M},N}_{u_1,u_2}$. Then, by exploiting the symmetry of $p(\cdot,w)$, we get
		\begin{align*}
			c^{\mathbb{M},p}_t(u_1,u_2)&\le \int_{-N}^{N} \!p(\dot{\check{v}}(r),-\mathbb{M}\ddot{\check{v}}(r)-D_x\mc E(t,\check{v}(r)))
			\mathrm{d}r=\int_{-N}^{N}\! p(-\dot{{v}}(r),-\mathbb{M}\ddot{{v}}(r)-D_x\mc E(t,{v}(r)))
			\mathrm{d}r\\
			&=\int_{-N}^{N} p(\dot{{v}}(r),-\mathbb{M}\ddot{{v}}(r)-D_x\mc E(t,{v}(r)))
			\mathrm{d}r.
		\end{align*}
		This implies $c^{\mathbb{M},p}_t(u_1,u_2)\le c^{\mathbb{M},p}_t(u_2,u_1)$, and by interchanging the role of $u_1$ and $u_2$ we conclude.
	\end{proof}

	\section{Continuous slow-loading limit}\label{sec:slowload}
	The aim of this section is to show that Inertial Balanced (and Virtual) Viscosity solutions can be obtained as slow-loading limit (i.e. as $\eps\to 0$) of dynamical solutions $\xeps$ to \eqref{mainprob}. Namely, we prove part (I) of Theorem~\ref{mainthm}. Hence, here we are assuming \ref{hyp:E1}-\ref{hyp:E4} and \ref{hyp:R1}.
	
	\subsection{Known results}We first briefly recall the known results proved in \cite{GidRiv}. In particular, for the existence of solutions to \eqref{mainprob} we refer to \cite[Theorem~3.8]{GidRiv}, where the problem is considered under more general assumptions.
	\begin{thm}
		For every pair of initial data $(u^\eps_0,u_1^\eps)\in X\times X$ there exists at least a solution $\xeps\in W^{2,\infty}(0,T;X)$ to the differential inclusion \eqref{mainprob}. Moreover, the following energy identity holds:
		\begin{equation}
			\begin{split}
				&\frac{\varepsilon^2}{2}\|\xepsd(t)\|_{\mathbb{M}}^2+ \mc E(t,\xeps(t))+\int_s^t {\mc R}(\xepsd(r))\,\mathrm{d}r  + \varepsilon\int_s^t |\xepsd(r)|_{\mathbb{V}}^2\,\mathrm{d}r \\
				& =\frac{\varepsilon^2}{2}\|\xepsd(s)\|_{\mathbb{M}}^2 +  \mc E(s,\xeps(s)) +  \int_s^t \partial_t\mc E(r,\xeps(r))\,\mathrm{d}r,
			\end{split}
			\label{eq:energyident1}
		\end{equation}
		for every $0\leq s\leq t\leq T$.
	\end{thm}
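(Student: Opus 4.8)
The plan is a classical regularization-and-compactness argument; $\eps$ is fixed throughout, so all constants below may depend on it. First I would \emph{regularize the rate-independent dissipation}: for $\delta>0$ let $(\partial\mc R)_\delta$ denote the Yosida approximation of the maximal monotone operator $\partial\mc R$ (realized through a fixed Hilbert structure on $X$), a single-valued $\tfrac1\delta$-Lipschitz map satisfying $(\partial\mc R)_\delta(0)=0$ and $(\partial\mc R)_\delta(v)\in\partial\mc R(J_\delta v)\subseteq K^*\subseteq\overline{B_{\alpha^*}}$ for every $v$, where $J_\delta=(I+\delta\partial\mc R)^{-1}$ is the resolvent. I would then consider the regularized equation $\eps^2\M\ddot u^\delta+(\partial\mc R)_\delta(\dot u^\delta)+\eps\V\dot u^\delta+D_x\mc E(t,u^\delta)=0$ with $u^\delta(0)=u_0^\eps$, $\dot u^\delta(0)=u_1^\eps$: viewed as a first-order system in $(u^\delta,\dot u^\delta)$ it has a right-hand side that is continuous in all variables (by \ref{hyp:E2} and Lipschitzianity of $(\partial\mc R)_\delta$), so Peano's existence theorem provides a local solution $u^\delta\in C^2$.

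Next I would derive \emph{$\delta$-uniform a priori bounds}. Pairing the regularized equation with $\dot u^\delta(t)$, using monotonicity to get $\scal{(\partial\mc R)_\delta(\dot u^\delta)}{\dot u^\delta}\ge0$, the identity $\scal{\M\ddot u^\delta}{\dot u^\delta}=\tfrac12\tfrac{d}{dt}\norm{\dot u^\delta}_\M^2$, and the chain rule $\scal{D_x\mc E(t,u^\delta)}{\dot u^\delta}=\tfrac{d}{dt}\mc E(t,u^\delta)-\partial_t\mc E(t,u^\delta)$ (legitimate since $u^\delta$ is absolutely continuous and $\mc E$ satisfies \ref{hyp:E1}--\ref{hyp:E2}, as recalled in the excerpt), and finally discarding the nonnegative terms $\scal{(\partial\mc R)_\delta(\dot u^\delta)}{\dot u^\delta}$ and $\eps\abs{\dot u^\delta}_\V^2$, one gets $\tfrac{d}{dt}\big(\tfrac{\eps^2}{2}\norm{\dot u^\delta}_\M^2+\mc E(t,u^\delta)\big)\le\abs{\partial_t\mc E(t,u^\delta)}\le a(\mc E(t,u^\delta))b(t)$ by \ref{hyp:E3}. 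Since $\mc E\ge0$ and $a$ is nondecreasing, a Gronwall-type argument bounds $\tfrac{\eps^2}{2}\norm{\dot u^\delta}_\M^2+\mc E(\cdot,u^\delta)$ on $[0,T]$ uniformly in $\delta$, hence also $\norm{\dot u^\delta}_{L^\infty}$ and $\norm{u^\delta}_{L^\infty}$; in particular the solutions extend to all of $[0,T]$. Then $\norm{D_x\mc E(\cdot,u^\delta)}_{L^\infty}$ is bounded by continuity of $D_x\mc E$ on compacts, and solving the equation for $\ddot u^\delta$ and using $\norm{(\partial\mc R)_\delta(\dot u^\delta)}_*\le\alpha^*$ yields a bound on $\norm{\ddot u^\delta}_{L^\infty}$. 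Thus $\{u^\delta\}_\delta$ is bounded in $W^{2,\infty}(0,T;X)$.

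I would then \emph{pass to the limit $\delta\to0$}. By finite dimensionality, along a subsequence $u^\delta\to\xeps$ in $C^1([0,T];X)$ and $\ddot u^\delta\wto\xepsdd$ weakly-$*$ in $L^\infty$, for some $\xeps\in W^{2,\infty}(0,T;X)$ with $\xeps(0)=u_0^\eps$, $\xepsd(0)=u_1^\eps$; moreover $D_x\mc E(\cdot,u^\delta)\to D_x\mc E(\cdot,\xeps)$ uniformly. Hence $\xi^\delta:=(\partial\mc R)_\delta(\dot u^\delta)=-\eps^2\M\ddot u^\delta-\eps\V\dot u^\delta-D_x\mc E(\cdot,u^\delta)$ converges weakly-$*$ in $L^\infty$ to $\xi:=-\eps^2\M\xepsdd-\eps\V\xepsd-D_x\mc E(\cdot,\xeps)$. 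Since $\norm{J_\delta\dot u^\delta-\dot u^\delta}\le C\delta\to0$ uniformly, also $J_\delta\dot u^\delta\to\xepsd$ strongly in $L^2(0,T;X)$, and as $(J_\delta\dot u^\delta(t),\xi^\delta(t))$ belongs to the graph of $\partial\mc R$ for a.e.\ $t$, the strong--weak closedness of the maximal monotone realization of $\partial\mc R$ on $L^2(0,T;X)$ gives $\xi(t)\in\partial\mc R(\xepsd(t))$ for a.e.\ $t$; that is, $\xeps$ solves \eqref{mainprob}.

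Finally, for the \emph{energy identity}: the selection $\xi$ constructed above automatically satisfies $\scal{\xi(t)}{\xepsd(t)}=\mc R(\xepsd(t))$ a.e., by the characterization \eqref{2.2mielke} of the subdifferential of the one-homogeneous $\mc R$; pairing \eqref{mainprob} with $\xepsd(t)$, using once more $\scal{\M\xepsdd}{\xepsd}=\tfrac12\tfrac{d}{dt}\norm{\xepsd}_\M^2$ and the chain rule for $\mc E$, and integrating over $[s,t]$ yields \eqref{eq:energyident1}. I expect the two genuinely delicate points to be: (i) the \emph{global} a priori bound, i.e.\ ruling out finite-time blow-up of the energy through the Gronwall argument built on \ref{hyp:E3}, so that the approximate and limit solutions live on all of $[0,T]$; and (ii) the identification of the weak limit $\xi$ of the nonlinearity with an element of $\partial\mc R(\xepsd)$ — because $D_x\mc E$ is only continuous, not Lipschitz, in the state variable, a contraction/uniqueness route is unavailable and one is forced into this compactness plus maximal-monotone-closedness step, using the resolvent-corrected sequence $J_\delta\dot u^\delta$ to recover strong $L^2$ convergence of the velocities.
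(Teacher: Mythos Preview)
Your regularization-and-compactness argument is correct and self-contained. Note, however, that the paper itself does \emph{not} prove this statement: it is quoted among the ``known results'' at the start of Section~\ref{sec:slowload}, with the proof delegated to \cite[Theorem~3.8]{GidRiv}. So there is no in-paper argument to compare against directly.

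As a brief remark on alternatives: the route more consonant with the present paper (cf.\ Section~\ref{sec:discrete}, and presumably closer to \cite{GidRiv}) is time-discretization via a minimizing-movements scheme followed by passage to the limit as the time step vanishes with $\eps$ fixed, rather than a Yosida regularization of $\partial\mc R$. Both strategies yield the same a priori bounds and face the same closure problem for the nonlinear term; your approach has the advantage of remaining in continuous time and invoking only the standard strong--weak closedness of maximal monotone operators on $L^2$, while the discrete route provides a variational selection at each step and interpolates. Your identification of the delicate points is accurate; concerning (i), observe that under \ref{hyp:E3} with a general nondecreasing $a$ the scalar comparison ODE $y'=a(y)b(t)$ need not be globally solvable on $[0,T]$, so the global energy bound --- and with it global existence --- tacitly requires this (a Bihari--Osgood condition on $a$, or simply the linear choice \ref{hyp:E3'}); this is implicit in the theorem as stated and is handled in the cited reference.
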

	
	\begin{prop}
		Let $\xeps$ be a solution to problem \eqref{mainprob}, and assume $u^\eps_0$, $\eps u^\eps_1$ to be uniformly bounded. Then there exists a positive constant $\overline{C}>0$ such that for every $\varepsilon>0$ the following a priori bounds hold:
		\begin{enumerate}
			\item[\rm (a)] $\max\limits_{t\in [0,T]}\|\xeps(t)\|\leq \overline{C}$;
			\item[\rm (b)] $\max\limits_{t\in [0,T]}\varepsilon\|\xepsd(t)\|_\M\leq \overline{C}$;
			\item[\rm (c)] $\essup\limits_{t\in [0,T]}\varepsilon^2\|\M\xepsdd(t)\|_*\leq \overline{C}$;
			\item[\rm (d)] $\displaystyle\int_0^T\mc R(\xepsd(r))\,\mathrm{d}r\leq \overline{C}$.
		\end{enumerate}
		\label{propbounds}
	\end{prop}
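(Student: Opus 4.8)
The plan is to squeeze everything out of the energy identity \eqref{eq:energyident1}, evaluated between $s=0$ and a generic $t\in[0,T]$, using \ref{hyp:E1}--\ref{hyp:E4} and \ref{hyp:R1}. Introduce the total energy $\mathcal{G}^\eps(t):=\frac{\eps^2}{2}\|\xepsd(t)\|_\M^2+\mc E(t,\xeps(t))$. Since the two dissipative terms in \eqref{eq:energyident1} are nonnegative and $\mc E\ge0$,
\begin{equation*}
	\mathcal{G}^\eps(t)\le\mathcal{G}^\eps(0)+\int_0^t|\partial_t\mc E(r,\xeps(r))|\,\d r .
\end{equation*}
The initial datum is bounded uniformly in $\eps$: indeed $\eps^2\|\xepsd(0)\|_\M^2=\|\eps u_1^\eps\|_\M^2\le M\|\eps u_1^\eps\|^2$ is bounded by assumption, and $\mc E(0,u_0^\eps)$ is bounded because $\mc E(0,\cdot)$ is continuous and $\{u_0^\eps\}$ is bounded. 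For the remaining integral I would use \ref{hyp:E3}: as $0\le\mc E(r,\xeps(r))\le\mathcal{G}^\eps(r)$ and $a$ is nondecreasing, $|\partial_t\mc E(r,\xeps(r))|\le a(\mathcal{G}^\eps(r))\,b(r)$, so that a Gronwall-type argument (in its nonlinear, Bihari, form) yields a constant $\overline{C}$, depending only on the bounds on the data, on $\|b\|_{L^1(0,T)}$ and on $a$, with $\mathcal{G}^\eps(t)\le\overline{C}$ for every $t\in[0,T]$ and every $\eps>0$ --- essentially the same computation that underlies the absolute continuity of $t\mapsto\mc E(t,u(t))$ recalled after \ref{hyp:E4}. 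Since $\mc E\ge0$, this already proves (b) (with $\overline{C}$ possibly enlarged), and also gives the uniform bound $\sup_{\eps,\,t}\mc E(t,\xeps(t))<+\infty$.

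Bounds (d) and (a) then follow quickly. Taking $t=T$ in \eqref{eq:energyident1} and now retaining the term $\int_0^T\mc R(\xepsd)$,
\begin{equation*}
	\int_0^T\mc R(\xepsd(r))\,\d r\le\mathcal{G}^\eps(0)+\int_0^T a(\mc E(r,\xeps(r)))\,b(r)\,\d r\le\overline{C}+\Big(\max_{[0,\overline{C}]}a\Big)\|b\|_{L^1(0,T)},
\end{equation*}
by the uniform energy bound and the continuity of $a$; this is (d). Next, by the fundamental theorem of calculus and the lower bound $\mc R(v)\ge\alpha_*\|v\|$ in \eqref{Rbounds},
\begin{equation*}
	\|\xeps(t)\|\le\|u_0^\eps\|+\int_0^t\|\xepsd(r)\|\,\d r\le\|u_0^\eps\|+\frac1{\alpha_*}\int_0^T\mc R(\xepsd(r))\,\d r,
\end{equation*}
which gives (a) since $\{u_0^\eps\}$ is bounded.

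For (c) I would use the differential inclusion \eqref{mainprob} directly: for a.e. $t$ one has
\begin{equation*}
	\eps^2\M\xepsdd(t)=-\eps\V\xepsd(t)-\xi(t)-D_x\mc E(t,\xeps(t)),\qquad \xi(t)\in\partial\mc R(\xepsd(t)) .
\end{equation*}
By \eqref{2.2mielke} and \eqref{boundedness} one has $\|\xi(t)\|_*\le\alpha^*$; by (a) the point $\xeps(t)$ stays in a fixed ball, so $\|D_x\mc E(t,\xeps(t))\|_*$ is bounded uniformly in $t$ and $\eps$ by continuity of $D_x\mc E$ (assumption \ref{hyp:E2}); and, $\V$ being a bounded linear operator in finite dimension, $\eps\|\V\xepsd(t)\|_*\le C\,\eps\|\xepsd(t)\|\le C\sqrt{M}\,\eps\|\xepsd(t)\|_\M$, which is bounded by (b). Adding the three estimates and taking the largest of the constants found so far as $\overline{C}$ gives (c) and concludes.

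The one genuinely delicate point is the first step: estimating $\int_0^t\partial_t\mc E$ along the trajectory needs an a priori bound on $\mc E(t,\xeps(t))$, which is part of the quantity being estimated, and this is closed only through the nonlinear Gronwall inequality; some care is required there so that the general nondecreasing $a$ allowed by \ref{hyp:E3} still produces a bound uniform on all of $[0,T]$ and independent of $\eps$. Once the energy bound is secured, everything else is elementary.
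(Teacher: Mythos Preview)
Your argument is correct and follows the same route as the paper. The paper's own proof is essentially a pointer: it cites \cite[Corollary~3.4]{GidRiv} for (a), (b), (d) and then derives (c) from the differential inclusion together with \eqref{boundedness}, which is exactly what you do in detail --- energy identity plus a Gronwall-type bound for the total energy to get (b) and (d), then the $\mc R$-variation bound for (a), and finally reading off (c) from \eqref{mainprob} using \eqref{2.2mielke}--\eqref{boundedness}, (a), \ref{hyp:E2}, and (b). Your flagged caveat about the nonlinear Gronwall/Bihari step under the bare assumption \ref{hyp:E3} is the only genuinely nontrivial point, and is precisely what is handled in the reference.
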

	
	\proof
	See \cite[Corollary~3.4]{GidRiv} for (a), (b) and (d). Assertion (c) follows by exploiting the differential inclusion solved by $\xeps$ together with \eqref{boundedness}.
	\endproof
	
	With the a priori bounds of Proposition~\ref{propbounds} at hand, an argument based on Helly's Selection Theorem provides the existence of a convergent subsequence of dynamic solutions $\xeps$. That is the content of the following proposition, whose proof is given in \cite[Theorem~6.1]{GidRiv} and thus is here omitted.
	
	\begin{prop}\label{thmlimitu}
		Let $u_0^\varepsilon$ and $\varepsilon u_1^\varepsilon$ be uniformly bounded. Then for every sequence $\epsj\to 0$ there exists a subsequence (not relabelled)
		and a function $u\in BV_{\mathcal{R}}([0;T];X)$ such that:
		\begin{enumerate}
			\item[\rm (i)] $u^{\varepsilon_j}(t)\to u(t)$, for every $t\in[0,T]$;
			\item[\rm (ii)] $\displaystyle V_{\mathcal{R}}(u;s,t)\leq \mathop{\lim\inf}_{j\to+\infty}\int_s^t\mathcal{R}(\dot{u}^{\varepsilon_j}(r))\,\mathrm{d}r$, for every $0\leq s\le t\leq T$;
			\item[\rm (iii)] $\varepsilon_j\|\dot{u}^{\varepsilon_j}(t)\|_{\mathbb{M}}\to0$ for every $t\in(0,T]\backslash J_u$, where $J_u$ is the jump set of $u$. 
		\end{enumerate}
	\end{prop}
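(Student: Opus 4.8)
The plan is to combine a Helly-type selection with the a priori bounds of Proposition~\ref{propbounds} and then to extract the kinetic energy from the energy identity \eqref{eq:energyident1}. Set $\mathcal{V}_j(t):=\int_0^t\RR(\xepsjd(r))\,\d r$. By bound (d) together with \eqref{Rbounds} the family $\{\xepsj\}$ is bounded in $BV([0,T];X)$ and, by (a), uniformly bounded; hence Helly's selection theorem produces a subsequence (not relabelled) and a function $u\in BV_\RR([0,T];X)$ with $\xepsj(t)\to u(t)$ for every $t$, which is (i), and, passing to a further subsequence, also $\mathcal{V}_j(t)\to\mathcal{V}(t)$ for every $t$ with $\mathcal{V}$ nondecreasing. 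For (ii), given any partition $s=t_0<\dots<t_n=t$, pointwise convergence and continuity of $\RR$ give $\sum_k\RR(u(t_k)-u(t_{k-1}))=\lim_j\sum_k\RR(\xepsj(t_k)-\xepsj(t_{k-1}))\le\liminf_j V_\RR(\xepsj;s,t)=\liminf_j\int_s^t\RR(\xepsjd(r))\,\d r$, the last equality being \eqref{contvar} applied to the absolutely continuous curve $\xepsj$; taking the supremum over partitions yields (ii).

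The delicate point is (iii). Writing $e_j(t):=\tfrac{\eps_j^2}{2}\|\xepsjd(t)\|_\M^2$, bound (b) gives $e_j(t)\le\tfrac12\overline C^2$, while (d) and \eqref{Rbounds} give $\int_0^T\eps_j\|\xepsjd(r)\|_\M\,\d r\le C\eps_j\to0$, so $e_j\to0$ in $L^1(0,T)$ and hence $e_j(t)\to0$ for a.e.\ $t$. The issue is that a priori the kinetic energy could concentrate at the (countably many) jump times, and one must rule this out at \emph{every} $t\notin J_u$, where $L^1$-convergence says nothing. I would read \eqref{eq:energyident1} with $s=0$ as
\[ e_j(t)=C_0^j-\mc E(t,\xepsj(t))-\mathcal{V}_j(t)-\eps_j\int_0^t|\xepsjd(r)|_\V^2\,\d r+\int_0^t\partial_t\mc E(r,\xepsj(r))\,\d r, \]
with $C_0^j:=e_j(0)+\mc E(0,u_0^{\eps_j})$ equibounded by the assumptions on the data. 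Since $\xepsj(r)\in\overline{B_{\overline C}}$ by (a) and $\mc E$ is continuous (as recalled above), $\mc E(r,\xepsj(r))\le\overline E$ uniformly, so by \ref{hyp:E3} the functions $\partial_t\mc E(\cdot,\xepsj(\cdot))$ are dominated by $a(\overline E)b(\cdot)\in L^1$; using \ref{hyp:E4} (so that $\partial_t\mc E(r,\cdot)$ is continuous for a.e.\ $r$) and (i), dominated convergence gives $\int_0^t\partial_t\mc E(r,\xepsj(r))\,\d r\to\int_0^t\partial_t\mc E(r,u(r))\,\d r$. Moreover $\mc E(t,\xepsj(t))\to\mc E(t,u(t))$ by continuity, $\mathcal{V}_j(t)\to\mathcal{V}(t)$, the nondecreasing equibounded maps $t\mapsto\eps_j\int_0^t|\xepsjd(r)|_\V^2\,\d r$ converge along a further subsequence to a nondecreasing $\mathcal{D}$, and $C_0^j\to C_0$.

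Along this subsequence $e_j(t)\to\ell(t):=C_0-\mc E(t,u(t))-\mathcal{V}(t)-\mathcal{D}(t)+\int_0^t\partial_t\mc E(r,u(r))\,\d r\ge0$ for every $t$, and $\ell=0$ a.e.\ by the $L^1$-bound above. Now fix $t\notin J_u$: then $u$ is continuous at $t$, hence $\Theta(t):=C_0-\mc E(t,u(t))+\int_0^t\partial_t\mc E(r,u(r))\,\d r$ is continuous at $t$, while $\mathcal{V}+\mathcal{D}$ is nondecreasing and equals $\Theta$ at a.e.\ point; letting $s\to t$ through such points forces the left and right limits of $\mathcal{V}+\mathcal{D}$ at $t$ to equal $\Theta(t)$, so by monotonicity $(\mathcal{V}+\mathcal{D})(t)=\Theta(t)$, i.e.\ $\ell(t)=0$. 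Finally, since every subsequence of the one selected in (i) admits, by the argument just given, a further subsequence along which $e_j(t)\to0$, the whole selected sequence satisfies $e_j(t)\to0$ for each fixed $t\notin J_u$, which is (iii). The main obstacle is precisely this last upgrade from almost-everywhere to everywhere-off-$J_u$: it is where the monotonicity of $\mathcal{V}+\mathcal{D}$ and the continuity of $\Theta$ outside the jump set — both consequences of the energy identity \eqref{eq:energyident1} — are essential.
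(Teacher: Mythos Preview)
Your argument is correct and aligns with the approach the paper indicates: the paper omits the proof, referring to \cite[Theorem~6.1]{GidRiv}, but explicitly states it is ``an argument based on Helly's Selection Theorem'' for (i)--(ii), and the discrete analogue (Proposition~\ref{prop:compactnessbis}) confirms that (iii) is obtained exactly as you do, by reading the kinetic energy off the energy identity \eqref{eq:energyident1}. Your handling of the upgrade from a.e.\ to every $t\notin J_u$ via the monotonicity of $\mathcal{V}+\mathcal{D}$ and the continuity of $\Theta$, together with the sub-subsequence argument, is the right mechanism; just note that the passage from $\int_0^T\eps_j\|\xepsjd\|_\M\,\d r\to 0$ to $e_j\to 0$ in $L^1$ implicitly uses the pointwise bound (b) via $e_j(t)\le\tfrac12\overline C\cdot\eps_j\|\xepsjd(t)\|_\M$, and that $C_0^j\to C_0$ requires passing to a further subsequence (only boundedness of $u_0^\eps$ and $\eps u_1^\eps$ is assumed here).
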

	
	In addition, arguing as for \cite[Propositions~6.2-6.3]{GidRiv}, it can be proven that the limit evolution $u$ above complies with the local stability condition \eqref{locstab} and a suitable energy inequality. We highlight that a function fulfilling such properties is usually called an \emph{a.e. local solution} to the rate-independent system \eqref{quasistprob} (see for instance \cite[Chapter 3]{MielkRoubbook}).
	
	\begin{prop}\label{prop:stabilityandineq}
		Let $u_0^\varepsilon$ and $\varepsilon u_1^\varepsilon$ be uniformly bounded, and $u$ be as in Proposition~\ref{thmlimitu}. Then the inequality
		\begin{equation*}
			\int_s^t \mathcal{R}(v)+\langle D_x\mathcal{E}(r, u(r)),v\rangle\,\mathrm{d}r\geq0,
		\end{equation*}
		holds for every $v\in X$ and for every $0\leq s\leq t\leq T$. In particular, the left and right limits of $u$ are locally stable; i.e., they fulfill the inequalities
		\begin{align}
			&\mathcal{R}(v)+\langle D_x\mathcal{E}(t, u^-(t)),v\rangle\geq 0\,,\quad \mbox{ for every $v\in X$ and for every $t\in(0,T]$;}\label{LS-}\\
			&\mathcal{R}(v)+\langle D_x\mathcal{E}(t, u^+(t)),v\rangle\geq 0\,,\quad \mbox{ for every $v\in X$ and for every $t\in[0,T]$,}\label{LS+}
		\end{align}
		or equivalently
		\begin{align*}
			&-D_x\mc E(t,u^-(t))\in  K^*,\quad\mbox{ for every }t\in (0,T];\\
			&-D_x\mc E(t,u^+(t))\in  K^*,\quad\mbox{ for every }t\in [0,T].
		\end{align*}
		Moreover, the energy inequality
		\begin{equation}
			\mathcal{E}(t,u^+(t))+ V_{\mathcal{R}}(u;s-,t+)\leq \mathcal{E}(s,u^-(s)) + \int_s^t\partial_t \mathcal{E}(r,u(r))\,\mathrm{d}r,
			\label{LEE}
		\end{equation}
		holds for every $0<s\leq t\leq T$. If in addition $\varepsilon u_1^\varepsilon\to0$, then \eqref{LEE} holds true also for $s=0$.
	\end{prop}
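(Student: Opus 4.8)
The plan is to obtain Proposition~\ref{prop:stabilityandineq} by passing to the limit in the energy identity \eqref{eq:energyident1}, following closely the strategy of \cite[Propositions~6.2--6.3]{GidRiv}. The starting point is the local stability: fix $v\in X$ and use $\xepsd+\lambda v$ as a comparison in the differential inclusion \eqref{mainprobequiv} (or, more directly, test \eqref{mainprob} against an arbitrary direction), exploiting one-homogeneity of $\mc R$ to get, for a.e.\ $r$, the inequality $\mc R(v)+\langle \eps^2\M\xepsdd(r)+\eps\V\xepsd(r)+D_x\mc E(r,\xeps(r)),v\rangle\ge 0$. Integrating on $[s,t]$, using bound (c) of Proposition~\ref{propbounds} to kill the inertial term (which is $O(\eps^2)\cdot\|\ddot u^\eps\|$, hence $O(1)$ but weighted so that $\int_s^t\eps^2\langle\M\xepsdd,v\rangle\le 2\eps\cdot\sup\eps\|\xepsd\|_\M\|v\|\to 0$ after an integration by parts, using (b) and the fact that the endpoint terms vanish away from jumps by Proposition~\ref{thmlimitu}(iii)), and using the Cauchy--Schwarz/energy bound $\eps\int|\xepsd|_\V^2\le\overline C$ together with Cauchy--Schwarz to bound $\eps\int\langle\V\xepsd,v\rangle$ by $(\eps\int|\xepsd|_\V^2)^{1/2}(\eps\int|v|_\V^2)^{1/2}=O(\sqrt\eps)\to0$, one passes to the limit via dominated convergence (here \ref{hyp:E2} gives $D_x\mc E(r,\xepsj(r))\to D_x\mc E(r,u(r))$ pointwise, and \eqref{boundedness}-type bounds on $D_x\mc E$ on the compact set $\overline{B_{\overline C}}$ give domination) to obtain $\int_s^t \mc R(v)+\langle D_x\mc E(r,u(r)),v\rangle\d r\ge 0$. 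Shrinking $[s,t]$ to a point of continuity of $r\mapsto D_x\mc E(r,u^\pm(r))$ and using regularity of $u$ at $t$ (left/right limits) yields \eqref{LS-}--\eqref{LS+}; the equivalent inclusions into $K^*$ are then just \eqref{2.2mielke}.

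For the energy inequality \eqref{LEE}, I would discard the nonnegative viscous dissipation term $\eps\int_s^t|\xepsd|_\V^2$ and the nonnegative kinetic term $\frac{\eps^2}{2}\|\xepsd(t)\|_\M^2$ from the left side of \eqref{eq:energyident1}, keeping
\begin{equation*}
\mc E(t,\xeps(t))+\int_s^t\mc R(\xepsd(r))\d r\le \frac{\eps^2}{2}\|\xepsd(s)\|_\M^2+\mc E(s,\xeps(s))+\int_s^t\partial_t\mc E(r,\xeps(r))\d r.
\end{equation*}
On the right, $\frac{\eps^2}{2}\|\xepsd(s)\|_\M^2\to 0$ for $s\notin J_u$ by Proposition~\ref{thmlimitu}(iii); for $s=0$ this requires precisely the extra hypothesis $\eps u_1^\eps\to 0$. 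The potential-energy term converges pointwise by continuity of $\mc E$ (a consequence of \ref{hyp:E1}--\ref{hyp:E2}, as recalled after \ref{hyp:E4}). The $\partial_t\mc E$ integral is handled by \ref{hyp:E3}: since $\mc E(r,\xepsj(r))$ is uniformly bounded (from \eqref{eq:energyident1} itself, Gronwall on \ref{hyp:E3}, and the a priori bounds), $|\partial_t\mc E(r,\xepsj(r))|\le a(\text{const})\,b(r)$ with $b\in L^1$, so dominated convergence applies once one knows $\partial_t\mc E(r,\xepsj(r))\to\partial_t\mc E(r,u(r))$ for a.e.\ $r$ — which holds at continuity points of $u$ since $\xepsj(r)\to u(r)$ and, by \ref{hyp:E4} plus Remark~\ref{rmkE}, $\partial_t\mc E(r,\cdot)$ is continuous for a.e.\ $r$ (the countable jump set of $u$ is $\mc L^1$-null). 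Finally, on the left, $\mc E(t,\xepsj(t))\to\mc E(t,u(t))$; but \eqref{LEE} wants $\mc E(t,u^+(t))$, so one should first localize at $t+\delta\notin J_u$, pass to the limit, then let $\delta\searrow 0$ using right-continuity. The dissipation term is dealt with by Proposition~\ref{thmlimitu}(ii), $V_\mc R(u;s,t)\le\liminf\int_s^t\mc R(\xepsjd)$; to upgrade from $V_\mc R(u;s,t)$ to the essential-variation-type quantity $V_\mc R(u;s-,t+)$ one again enlarges the interval to nearby continuity points $s-\delta,t+\delta$ and uses monotonicity of $V_\mc R$ together with the convention $V_\mc R(u;s-,t+)=\lim V_\mc R(u;s-\delta,t+\delta)$ recorded before \eqref{eq: representation}.

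The main obstacle I anticipate is the careful handling of the jump set and the asymmetry between the pointwise values $\xepsj(t)$ and the left/right limits $u^\pm(t)$: one cannot pass to the limit directly at a jump time, so every estimate must be first established on intervals with continuity-point endpoints and then the endpoints must be pushed to $s,t$ by a limiting argument exploiting regulated-function structure and monotonicity of $r\mapsto V_\mc R(u;\cdot,\cdot)$. A secondary technical point is justifying the uniform bound on $\mc E(r,\xepsj(r))$ needed to invoke \ref{hyp:E3} with the dominating function $a(\cdot)b(\cdot)$: this is circular-looking since that bound is used inside the very energy identity one is estimating, and the clean way out is a Gronwall argument applied to $t\mapsto \mc E(t,\xepsj(t))$ using \ref{hyp:E3} and the discarded-nonnegativity form of \eqref{eq:energyident1}, exactly as in \cite[Corollary~3.4]{GidRiv}. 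Once these bookkeeping issues are settled, everything else is routine lower semicontinuity and dominated convergence, and the $s=0$ case follows verbatim once $\eps u_1^\eps\to 0$ is assumed so that the initial kinetic energy vanishes in the limit.
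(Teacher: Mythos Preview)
Your proposal is correct and follows essentially the same route as the paper, which explicitly defers to \cite[Propositions~6.2--6.3]{GidRiv} for the argument. One small clarification: for the integrated stability inequality, after integration by parts the inertial boundary term $\eps^2\langle\M\xepsd(r),v\rangle\big|_s^t$ is bounded by $2\eps\cdot\overline{C}\,\|v\|$ using (b) of Proposition~\ref{propbounds} alone, so it vanishes \emph{uniformly} in $s,t$ and you do not need to invoke Proposition~\ref{thmlimitu}(iii) at this stage; the restriction to continuity-point endpoints is only needed later, in the energy inequality, exactly as you describe.
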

	
	It is worth mentioning that, under the additional assumption of (uniform) convexity on the energy $\mc E$, in \cite{GidRiv} the authors were able to deduce that the limit function $u$ is continuous and that \eqref{LEE} is actually an energy equality. They thus obtained in the limit an Energetic solution of the rate-independent problem.
	\subsection{Characterization of the energy loss at jumps}
	In the nonconvex setting continuity of the limit function is no more reasonable, hence the gap of the energy in \eqref{LEE} has to be characterized. This first proposition shows that, as expected, the peculiar behaviour of the limit function $u$ is restricted to its (essential) jump set.
	
	\begin{prop}\label{propatomic}
		Let $u_0^\varepsilon\to u_0$, $\eps u_1^\eps\to 0$, and $u$ be as in Proposition~\ref{thmlimitu}. Then there exists a positive Radon measure $\mu$ such that for every $0\le s\le t\le T$ there holds
		\begin{equation*}
			\mc E(t,u^+(t))+V_\RR(u_{\rm co};s,t)+\sum_{r\in J^{\rm e}_u\cap[s,t]}\mu (\{r\})=	\mc E(s,u^-(s))+\int_{s}^{t}\partial_t\mc E(r,u(r))\d r.
		\end{equation*}
		In particular, $\mc E(t,u^-(t))-\mc E(t,u^+(t))=\mu (\{t\})\ge 0$ for every $t\in J^{\rm e}_u$.
	\end{prop}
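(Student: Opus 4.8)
The plan is to start from the energy inequality \eqref{LEE} of Proposition~\ref{prop:stabilityandineq} and to upgrade it to an equality modulo a purely atomic defect concentrated on $J^{\rm e}_u$. First, I would rewrite \eqref{LEE} using the decomposition \eqref{essvar} of the essential $\RR$-variation: since $V_\RR(u;s-,t+)\ge \RR(u')([s,t]) = V_\RR(u_{\rm co};s,t) + \sum_{r\in J^{\rm e}_u\cap[s,t]}\RR(u^+(r)-u^-(r))$, we obtain
\begin{equation*}
	\mc E(t,u^+(t)) + V_\RR(u_{\rm co};s,t) + \sum_{r\in J^{\rm e}_u\cap[s,t]}\RR(u^+(r)-u^-(r)) \le \mc E(s,u^-(s)) + \int_s^t \partial_t\mc E(r,u(r))\d r
\end{equation*}
for all $0\le s\le t\le T$. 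Define, for $0\le s\le t\le T$, the (nonnegative) defect
\begin{equation*}
	\mu([s,t]) := \mc E(s,u^-(s)) + \int_s^t \partial_t\mc E(r,u(r))\d r - \mc E(t,u^+(t)) - V_\RR(u_{\rm co};s,t) - \sum_{r\in J^{\rm e}_u\cap[s,t]}\RR(u^+(r)-u^-(r)).
\end{equation*}
I would then verify that $\mu$ is additive over adjacent intervals $[s,r]\cup[r,t]$ (the telescoping of the $\mc E(\cdot,u^\pm(\cdot))$ terms through the intermediate point $r$ works precisely because both $u^-(r)$ and $u^+(r)$ enter, so there is no mismatch at the splitting point), and hence extends to a nonnegative, finitely additive set function; combined with regularity of $t\mapsto\mc E(t,u(t))$ for BV $u$ (recalled after \ref{hyp:E4}) and the fact that $V_\RR(u_{\rm co};\cdot,\cdot)$ is the variation of a continuous BV function, one upgrades $\mu$ to a genuine positive Radon measure on $[0,T]$.

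The heart of the matter is then to show that $\mu$ is purely atomic with atoms only on $J^{\rm e}_u$; equivalently, that its diffuse part vanishes. The natural tool here is a chain-rule argument for the absolutely continuous and Cantor parts: by Remark~\ref{rmkE}, $D_x\mc E(\cdot,u)$ is absolutely continuous for fixed $u$, and together with \ref{hyp:E2} this gives enough regularity to apply the BV chain rule of \cite{CrDeCicco} (exactly the reference flagged in Remark~\ref{rmkE}) to the composition $r\mapsto \mc E(r,u(r))$. On the diffuse part $\lambda = \mc L^1 + |u'_{\rm Ca}|$ this yields
\begin{equation*}
	\frac{\d}{\d\lambda}\big(\mc E(r,u(r))\big) - \frac{\d}{\d\lambda}\Big(\int_0^r\partial_t\mc E\Big) = \Big\langle D_x\mc E(r,u(r)), \frac{\d u'_{\rm co}}{\d\lambda}(r)\Big\rangle \ge -\RR\Big(\frac{\d u'_{\rm co}}{\d\lambda}(r)\Big) \quad \lambda\text{-a.e.},
\end{equation*}
where the inequality is the local stability \eqref{locstab} (valid off $J_u$, hence $\lambda$-a.e. since $\lambda$ does not charge $J_u$) applied to the vector $v = \tfrac{\d u'_{\rm co}}{\d\lambda}(r)$. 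Integrating against $\lambda$ over $[s,t]$ and using \eqref{contvar} shows that the diffuse part of $\mu$ is $\le 0$; being also $\ge 0$, it is zero. Therefore $\mu = \sum_{r\in J_u}\mu(\{r\})\,\delta_r$; and since for $r\notin J^{\rm e}_u$ one has $u^-(r)=u^+(r)$ and the chain rule controls the jump contribution to give $\mu(\{r\})=0$ there too, the atoms live on $J^{\rm e}_u$. Substituting $\mu([s,t]) = \sum_{r\in J^{\rm e}_u\cap[s,t]}\mu(\{r\})$ back into the definition and absorbing the $\RR(u^+(r)-u^-(r))$ terms into the atoms (renaming $\mu(\{r\}) + \RR(u^+(r)-u^-(r))$ as the new atom, still nonnegative) yields the stated identity. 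Finally, choosing $s=t\in J^{\rm e}_u$ and using continuity of $V_\RR(u_{\rm co};\cdot,\cdot)$ and of the integral term collapses everything to $\mc E(t,u^-(t)) - \mc E(t,u^+(t)) = \mu(\{t\}) \ge 0$.

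The step I expect to be the main obstacle is the rigorous application of the BV chain rule to $r\mapsto\mc E(r,u(r))$: one must check that the joint regularity of $\mc E$ in \ref{hyp:E1}--\ref{hyp:E4} (in particular the absolute continuity of $D_x\mc E(\cdot,u)$ from Remark~\ref{rmkE} and the continuity of $D_x\mc E$ from \ref{hyp:E2}) matches exactly the hypotheses of the chain-rule result in \cite{CrDeCicco}, and that the partial-time derivative $\partial_t\mc E(r,u(r))$ really accounts for the "explicit time" contribution with no extra Cantor-type correction coming from the time variable. Once the chain rule is in place, the sign argument and the passage to atoms are routine.
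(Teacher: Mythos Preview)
Your proposal is correct and uses the same ingredients as the paper: the BV chain rule of \cite{CrDeCicco}, local stability applied with $v=\tfrac{\mathrm{d}u'_{\rm co}}{\mathrm{d}\lambda}$, and the energy inequality \eqref{LEE}. The only organizational difference is in how the measure is built. The paper first shows that $t\mapsto \mc E(t,u^+(t))-\int_0^t\partial_t\mc E(r,u(r))\,\mathrm{d}r$ is nonincreasing, so that its negative distributional derivative is directly a positive Radon measure $\mu$ satisfying $\mc E(t,u^+(t))+\mu([s,t])=\mc E(s,u^-(s))+\int_s^t\partial_t\mc E$; it then proves the two-sided identity $\mu([s,t]\setminus J^{\rm e}_u)=V_\RR(u_{\rm co};s,t)$ by pairing the chain-rule/stability bound $V_\RR(u_{\rm co};s,t)\ge \mu([s,t]\setminus J^{\rm e}_u)$ with the reverse inequality $\mu\ge \RR(u')$ coming from \eqref{LEE}. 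You instead subtract the essential $\RR$-variation from the outset and show the remainder is purely atomic, then reabsorb the $\RR$-jumps into the atoms. The two arguments are equivalent (your defect measure is exactly the paper's $\mu-\RR(u')$), but the paper's monotonicity route spares you the additivity-and-extension step in constructing the Radon measure, which you leave somewhat informal.
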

	\begin{proof}
		By reasoning as in \cite[Theorem 5.4]{ScilSol} it is easy to see that the map $t\mapsto \mc E(t,u^+(t))-\int_{0}^{t}\partial_t\mc E(r,u(r))\d r$ is nonincreasing; it essentially follows from the energy balance \eqref{eq:energyident1} by dropping the dissipated energy (i.e. the terms with $\RR(\cdot)$ and $|{\cdot}|_\V$) and controlling the kinetic energy in the limit $\eps\to 0$ by means of (iii) in Proposition~\ref{thmlimitu}. This implies the existence of a positive Radon measure ${\mu}$ for which
		\begin{equation}\label{mu}
			\mc E(t,u^+(t))+{\mu}([s,t])=	\mc E(s,u^-(s))+\int_{s}^{t}\partial_t\mc E(r,u(r))\d r,\,\text{ for every }0\le s\le t\le T.
		\end{equation}
		This in particular yields that the distributional derivative of $t\mapsto \mc E(t,u(t))$, denoted by $\mc E(\cdot,u(\cdot))'$, fulfils the relation
		\begin{equation}\label{distrder}
			\mc E(\cdot,u(\cdot))'=-{\mu}+\partial_t\mc E(\cdot,u(\cdot))\mc{L}^1.
		\end{equation}
		On the other hand, by the chain-rule formula in BV (see for instance \cite[Theorem 4.1]{CrDeCicco}, and recall Remark~\ref{rmkE}), it holds
		\begin{equation}\label{chainrule}
			\begin{aligned}
				\mc E(\cdot,u(\cdot))'&=\partial_t\mc E(\cdot,u(\cdot))\mc{L}^1+\scal{D_x\mc E(\cdot,u(\cdot))}{\dot{u}(\cdot)}\mc L^1\\
				&+\scal{D_x\mc E(\cdot,u(\cdot))}{\frac{\d u'_{\rm Ca}}{\d\lambda}(\cdot)}\lambda+\left[\mc E(\cdot,u^+(\cdot))-\mc E(\cdot,u^-(\cdot))\right]\mc H^0\llcorner J^{\rm e}_u,
			\end{aligned}
		\end{equation}
		where $\lambda=\mc L^1+|u'_{\rm Ca}|$ and we recall $u'=u'_{\rm co}+u'_{\rm J}=\dot{u}\mc L^1+u'_{\rm Ca}+u'_{\rm J}$ (see \eqref{eq:decompositionm}).\\
		By combining \eqref{distrder} and \eqref{chainrule} we obtain
		\begin{align*}
			{\mu}&=-\scal{D_x\mc E(\cdot,u(\cdot))}{\dot{u}(\cdot)\frac{\d \mc L^1}{\d\lambda}(\cdot)+\frac{\d u'_{\rm Ca}}{\d\lambda}(\cdot)}\lambda-\left[\mc E(\cdot,u^+(\cdot))-\mc E(\cdot,u^-(\cdot))\right]\mc H^0\llcorner J^{\rm e}_u\\
			&=-\scal{D_x\mc E(\cdot,u(\cdot))}{\frac{\d u'_{\rm co}}{\d\lambda}(\cdot)}\lambda-\left[\mc E(\cdot,u^+(\cdot))-\mc E(\cdot,u^-(\cdot))\right]\mc H^0\llcorner J^{\rm e}_u.
		\end{align*}
		Last equality yields
		\begin{equation}\label{mudiff}
			\frac{\d {\mu}_{\rm co}}{\d\lambda}(t)=-\scal{D_x\mc E(t,u(t))}{\frac{\d u'_{\rm co}}{\d\lambda}(t)},\quad \text{ for }\lambda{-}a.e.\,t\in[0,T],
		\end{equation}
		where we define ${\mu}_{\rm co}:={\mu}+\left[\mc E(\cdot,u^+(\cdot))-\mc E(\cdot,u^-(\cdot))\right]\mc H^0\llcorner J^{\rm e}_u$.\\
		By choosing $v=\frac{\d u'_{\rm co}}{\d\lambda}(t)$ in the local stability condition \eqref{LS+}, and using \eqref{mudiff} we deduce that 
		\begin{equation*}
			\RR\left(\frac{\d u'_{\rm co}}{\d\lambda}(t)\right)\ge-\scal{D_x\mc E(t,u(t))}{\frac{\d u'_{\rm co}}{\d\lambda}(t)}=\frac{\d {\mu}_{\rm co}}{\d\lambda}(t),\quad \text{ for }\lambda{-}a.e.\,t\in[0,T].
		\end{equation*}
		By integrating the above inequality in $[s,t]\subseteq [0,T]$, recalling \eqref{contvar}, we finally get
		\begin{equation}\label{ineq1}
			V_\RR(u_{\rm co};s,t)=\int_{s}^{t}\RR\left(\frac{\d u'_{\rm co}}{\d\lambda}(r)\right)\d \lambda(r)\ge \int_{s}^{t}\frac{\d {\mu}_{\rm co}}{\d\lambda}(r)\d \lambda(r)={\mu}([s,t]\setminus J^{\rm e}_u).
		\end{equation}
		To obtain the reverse inequality we combine \eqref{mu} and \eqref{LEE}, and use \eqref{eq: representation} and \eqref{essvar} to get
		\begin{align*}
			\mu([s,t])=\mc E(s,u^-(s))-\mc E(t,u^+(t))+\int_{s}^{t}\partial_t\mc E(r,u(r))\d r\ge V_\mc R(u;s-,t+)\ge \RR(u')([s,t]).	
		\end{align*}
		Since both $\mu$ and $\mc R(u')$ are Radon measures, the above inequality implies
		\begin{equation*}
			\mu(B)\ge \RR(u')(B),\quad\text{for every Borel set }B\subseteq[0,T].
		\end{equation*}
		In particular we deduce
		\begin{equation}\label{ineq2}
			\mu([s,t]\setminus J^{\rm e}_u)\ge \RR(u')([s,t]\setminus J^{\rm e}_u)=\int_{s}^{t}	\RR\left(\frac{\d u'_{\rm co}}{\d\lambda}(r)\right)\d \lambda(r)=V_\RR(u_{\rm co};s,t).
		\end{equation}
		By joining \eqref{ineq1} with \eqref{ineq2} we finally obtain
		\begin{equation*}
			\mu([s,t])=\mu ([s,t]\setminus J^{\rm e}_u)+\mu(J^{\rm e}_u\cap[s,t])=V_\RR(u_{\rm co};s,t)+\sum_{r\in J^{\rm e}_u\cap[s,t]}\mu(\{r\}),
		\end{equation*}
		and we conclude.
	\end{proof}
	
	Thanks to Proposition~\ref{thmupperbound} we already know that the inertial cost $c^{\mathbb{M},p}_t(u^-(t),u^+(t))$ is an upper bound bound for $\mu(\{t\})$ for every $p\in RCP_\V$. We now prove that it is a lower bound as well, thus concluding the proof of part (I) of Theorem~\ref{mainthm}.
	
	\begin{prop}
		Let $u_0^\varepsilon\to u_0$, $\eps u_1^\eps\to 0$, and $u$ be as in Proposition~\ref{thmlimitu}. Then for every $t\in [0,T]$ it holds
		\begin{equation}
			\mc E(t,u^-(t))-\mc E(t,u^+(t))\geq \sup_{p\in RCP_\V}c^{\mathbb{M},p}_t(u^-(t),u^+(t))\,.
			\label{lowbounddiss}
		\end{equation}
		\label{proposizione1}
	\end{prop}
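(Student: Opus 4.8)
The plan is to fix a jump time $t\in J^{\rm e}_u$ (the inequality being trivial otherwise, since both sides vanish when $u^-(t)=u^+(t)$) and a regularized contact potential $p\in RCP_\V$, and to \emph{construct} an admissible curve $v\in V^{\mathbb{M},N}_{u^-(t),u^+(t)}$ for suitably large $N$ whose energy-dissipation integral is controlled from above by $\mc E(t,u^-(t))-\mc E(t,u^+(t))$, up to an error vanishing as $N\to\infty$. By Proposition~\ref{propmin} this will give $c^{\mathbb{M},p}_t(u^-(t),u^+(t))\le \mc E(t,u^-(t))-\mc E(t,u^+(t))$, and taking the supremum over $p\in RCP_\V$ yields \eqref{lowbounddiss}. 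The natural candidate for the competitor comes from the dynamic solutions themselves: near the jump time $t$, after the blow-up $\tau\mapsto \xepsj(t+\epsj\tau)$ (or a similar rescaling dictated by the structure of \eqref{mainprob}), the rescaled trajectories $y^j(\tau):=\xepsj(t+\epsj\tau)$ satisfy $\ddot{y}^j + \mathbb{V}\dot{y}^j/\epsj \cdot \epsj + \dots$; more precisely, writing out \eqref{mainprob} in the fast time scale one sees $\mathbb{M}\ddot{y}^j(\tau)+\mathbb{V}\dot y^j(\tau)+\partial\RR(\dot y^j(\tau))\cdot(1/\epsj)\cdot\epsj+D_x\mc E(t+\epsj\tau,y^j(\tau))\ni 0$, so in the limit one expects a limit profile $v$ connecting $u^-(t)$ to $u^+(t)$ with $\mathbb{M}\ddot v + \partial\RR(\dot v)+D_x\mc E(t,v)\ni \text{(viscous correction)}$, matching the Euler--Lagrange structure behind the definition of $c^{\mathbb{M},p}_t$.

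Concretely, I would proceed as follows. \textbf{Step 1 (blow-up):} For each small $\delta>0$ choose times $s_\delta<t<t_\delta$ with $s_\delta,t_\delta\to t$ along which $\xepsj(s_\delta)\to u^-(t)$, $\xepsj(t_\delta)\to u^+(t)$, and apply the energy identity \eqref{eq:energyident1} on $[s_\delta,t_\delta]$. Using assumption \ref{hyp:E4}/\ref{hyp:E3} to bound $\int \partial_t\mc E$ and the continuity of $\mc E$, the left-over of \eqref{eq:energyident1} in the limit $\epsj\to0$ then $\delta\to 0$ reads
\begin{equation*}
\mc E(t,u^-(t))-\mc E(t,u^+(t)) \ge \limsup_{\delta\to0}\limsup_{j\to\infty}\left(\tfrac{\epsj^2}{2}\|\xepsjd(t_\delta)\|_\M^2-\tfrac{\epsj^2}{2}\|\xepsjd(s_\delta)\|_\M^2+\int_{s_\delta}^{t_\delta}\RR(\xepsjd)+\epsj|\xepsjd|_\V^2\,\mathrm{d}r\right).
\end{equation*}
\textbf{Step 2 (rescaling and compactness):} Introduce the fast variable; the a priori bounds in Proposition~\ref{propbounds}(b),(c) provide, after rescaling, uniform $W^{2,\infty}$-type bounds on the rescaled profiles on compact $\tau$-intervals $[-N,N]$, so that (up to subsequences and a diagonal argument in $N$) they converge in $C^1_{\rm loc}$ to a limit profile $v$ with $\essup\|\M\ddot v\|_*\le\overline{C}$; the boundary/velocity conditions $\M\dot v(\pm N)=0$ are recovered (or enforced by a cut-off modification with negligible cost) using Proposition~\ref{thmlimitu}(iii), which forces $\epsj\|\xepsjd\|_\M\to0$ away from $t$. \textbf{Step 3 (lower bound via Fenchel):} On the rescaled integral, use \eqref{Fenchel} together with $\RR_\eps(v)+\RR_\eps^*(w)\ge\langle w,v\rangle$ and the very definition of $p_\V=\inf_\eps(\RR_\eps+\RR_\eps^*)$, so that the rescaled dissipation integrand dominates $p_\V(\dot y^j,-\M\ddot y^j-D_x\mc E(\cdot,y^j))$ up to lower-order terms; passing to the liminf (lower semicontinuity as in Proposition~\ref{propmin}, using properties (ii),(iv) of $p$ and $p\le p_\V$) gives $\int_{-N}^N p(\dot v,-\M\ddot v -D_x\mc E(t,v))\,\mathrm{d}r$ as a lower bound. \textbf{Step 4:} Since $v\in V^{\mathbb{M},N}_{u^-(t),u^+(t)}$, this integral is $\ge$ the minimum in \eqref{limminN}, and letting $N\to\infty$ and then taking $\sup_{p}$ concludes.

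\textbf{Main obstacle.} The delicate point is Step 2–3: matching the \emph{endpoint velocity} constraints $\M\dot v(\pm N)=0$ in the admissible class with the genuine behaviour of the rescaled dynamic solutions, and simultaneously controlling the boundary kinetic terms $\tfrac{\epsj^2}{2}\|\xepsjd(t_\delta)\|_\M^2$ which need not vanish at the \emph{edges} of the fast window even though they do far from $t$. One must choose the window endpoints $s_\delta,t_\delta$ and the rescaling carefully (a double limit, $j\to\infty$ then widening the window $N\to\infty$, with the endpoints chosen so that the velocities there are small), and possibly interpose short "connecting arcs" whose $p$-cost is estimated by \eqref{cont0} and shown to be negligible. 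Also, since $p_\V$ may take the value $+\infty$, one must work throughout with a fixed $p\in RCP_\V$ (finite, continuous, weighted-Lipschitz in $w$) and only at the very end invoke $\sup_{p\in RCP_\V}$, exactly as the statement is phrased; the passage from the rescaled viscous dissipation to $p$ rather than to $p_\V$ uses property (iii) of Definition~\ref{RCP} in the direction $p\le p_\V$ together with a recovery-sequence argument for the reverse bound at the level of the cost.
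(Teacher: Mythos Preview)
Your proposal contains the right ingredients---the rescaling to fast time, the passage from $\RR_\eps+\RR_\eps^*$ to $p_\V$ and then to a fixed $p\in RCP_\V$ via property~(iii), the freezing of the time variable in $D_x\mc E$ using the weighted Lipschitz property~(iv), and the use of short connecting arcs to enforce the admissibility constraints---and you correctly identify the endpoint matching as the delicate point. However, the specific architecture you propose in Steps~2--3 introduces an unnecessary and problematic detour: the extraction of a limit profile $v$ on $[-N,N]$ via compactness followed by lower semicontinuity.

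The paper does \emph{not} extract any limit profile. Instead it chooses, by a diagonal argument, $j$-dependent endpoints $t_j^\pm$ with $t_j^-\nearrow t$, $t_j^+\searrow t$ such that simultaneously $u^{\eps_j}(t_j^\pm)\to u^\pm(t)$ and $\eps_j\dot u^{\eps_j}(t_j^\pm)\to 0$. It then rescales $v_j(\tau):=u^{\eps_j}(\eps_j\tau+t_j^-)$ on $[0,\sigma_j]$ with $\sigma_j=(t_j^+-t_j^-)/\eps_j$, and extends $v_j$ by explicit cubic arcs on $[-1,0]$ and $[\sigma_j,\sigma_j+1]$ matching the endpoint positions $u^\pm(t)$ and zero velocities. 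Because of the choice of $t_j^\pm$ these arcs have $C^1$-norm tending to zero, so by \eqref{cont0} their $p$-cost vanishes. After a time translation the modified $v_j$ lies in $V^{\M,N_j}_{u^-(t),u^+(t)}$ for some $N_j$, hence its $p$-integral dominates $c^{\M,p}_t(u^-(t),u^+(t))$ \emph{at each $j$}, and one simply takes the limsup. Your limit-profile route, by contrast, has a real gap: if you rescale about $t$ and pass $j\to\infty$ on a fixed window $[-N,N]$, the limit satisfies $v(\pm N)=\lim_j u^{\eps_j}(t\pm N\eps_j)$, which has no reason to equal $u^\pm(t)$ since $t\pm N\eps_j\to t$ and $t$ is a jump point. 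Thus $v|_{[-N,N]}\notin V^{\M,N}_{u^-(t),u^+(t)}$, and adding connecting arcs at the limit level would require the asymptotic information $v(\tau)\to u^\pm(t)$ as $\tau\to\pm\infty$, which you have not established and which is in fact bypassed entirely by the paper's direct approach. Your final remark about a ``recovery-sequence argument for the reverse bound'' is also unnecessary: only the one-sided inequality $p\le p_\V$ is used.
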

	
	\proof 
	Let $u^{\varepsilon_j}$ be the subsequence obtained in Proposition~\ref{thmlimitu}. We restrict to the case $t\in J^{\rm e}_u$, since for any $t\in [0,T]\backslash J^{\rm e}_u$ inequality \eqref{lowbounddiss} holds as a trivial equality in view of $(1)$ in Proposition~\ref{propertiescost}. If $t=0$ we  convene that the function $u^{\varepsilon_j}$ is extended to a left neighborhood of $0$ with an affine function of constant slope $u^{\varepsilon_j}_1$. Reasoning as in \cite[Proposition 5.8]{ScilSol}, by a diagonal argument we can find sequences $t_j^-\nearrow t$ and $t_j^+\searrow t$ and a (further) subsequence, still denoted by $\varepsilon_j$, such that
	\begin{align}
		&u^{\varepsilon_j}(t_j^-)\to u^-(t),\quad u^{\varepsilon_j}(t_j^+)\to u^+(t),\label{hyp1}\\
		&\varepsilon_j\dot{u}^{\varepsilon_j}(t_j^-)\to 0,\quad \varepsilon_j\dot{u}^{\varepsilon_j}(t_j^+)\to 0\,,\label{hyp2}
	\end{align}
	as $j\to+\infty$.\\
	By exploiting \eqref{augmeb} and from the definition of the contact potential $p_\V$ we thus infer:
	\begin{equation*}
		\begin{split}
			&\qquad\mc E(t,u^-(t))-\mc E(t,u^+(t))\\
			&=\displaystyle\lim_{j\to+\infty}\left[\mc E(t_j^-,u^{\varepsilon_j}(t_j^-)){-}\mc E(t_j^+,u^{\varepsilon_j}(t_j^+))
			+ \frac{\varepsilon_j^2}{2}\|\dot{u}^{\varepsilon_j}(t_j^-)\|_{\mathbb{M}}^2{-}\frac{\varepsilon_j^2}{2}\|\dot{u}^{\varepsilon_j}(t_j^+)\|_{\mathbb{M}}^2+\int_{t_j^-}^{t_j^+}\!\!\!\partial_t \mc E(r,u^{\varepsilon_j}(r))\,\mathrm{d}r\right]\\
			&=\lim_{j\to+\infty}\int_{t_j^-}^{t_j^+}{\mc R}_{\eps_j}(\dot{u}^{\varepsilon_j}(r))+\RR_{\eps_j}^*(w^{\epsj}(r))\d r\ge \limsup_{j\to +\infty}\int_{t_j^-}^{t_j^+}p_\V(\dot{u}^{\varepsilon_j}(r),w^{\epsj}(r))\d r\,.
		\end{split}
	\end{equation*}
	
	We now take any $p\in RCP_\V$, and from $(iii)$ in Definition~\ref{RCP} we can continue the previous inequality, getting
	\begin{equation}\label{tofreeze}
		\begin{split}
			\mc E(t,u^-(t))-\mc E(t,u^+(t))&\ge\limsup_{j\to +\infty}\int_{t_j^-}^{t_j^+}p(\dot{u}^{\varepsilon_j}(r),w^{\epsj}(r))\d r\\
			&= \limsup_{j\to +\infty}\int_{t_j^-}^{t_j^+}p(\dot{u}^{\varepsilon_j}(r),- \varepsilon_j^2 \mathbb{M}\ddot{u}^{\varepsilon_j}(r) - D_x\mc E(r,{u}^{\varepsilon_j}(r)))\d r.
		\end{split}
	\end{equation}
	
	Then, by using the Lipschitzianity of $p$ in the second variable (property $(iv)$ in Definition~\ref{RCP}), we notice that
	\begin{align*}
		&\int_{t_j^-}^{t_j^+}|p(\dot{u}^{\varepsilon_j}(r),- \varepsilon_j^2\mathbb{M}\ddot{u}^{\varepsilon_j}(r) - D_x\mc E(r,{u}^{\varepsilon_j}(r)))-p(\dot{u}^{\varepsilon_j}(r),- \varepsilon_j^2 \mathbb{M}\ddot{u}^{\varepsilon_j}(r) - D_x\mc E(t,{u}^{\varepsilon_j}(r)))|\d r\\
		&\le L \int_{t_j^-}^{t_j^+}\norm{\dot{u}^{\varepsilon_j}(r)}\norm{D_x\mc E(t,{u}^{\varepsilon_j}(r))-D_x\mc E(r,{u}^{\varepsilon_j}(r))}_*\d r\,.
	\end{align*}
	If we denote by $\omega$ a modulus of continuity for $D_x\mc E$ on $[0,T]\times B_{\overline C}$, where $\overline{C}$ is the constant of Proposition~\ref{propbounds}, we can bound the last term in the above inequality by
	\begin{equation*}
		L\,\omega(\max\{|t_j^+-t|, |t-t_j^-|\})\int_{t_j^-}^{t_j^+}\norm{\dot{u}^{\varepsilon_j}(r)}\d r\,,
	\end{equation*}
	which vanishes as $j\to +\infty$ thanks to the uniform bound (d) in Proposition~\ref{propbounds}.\\
	This means that we can freeze the time $t$ in $D_x\mc E$ of \eqref{tofreeze}, getting
	\begin{equation}\label{freezed}
		\mc E(t,u^-(t))-\mc E(t,u^+(t))\ge \limsup_{j\to +\infty}\int_{t_j^-}^{t_j^+}p(\dot{u}^{\varepsilon_j}(r),- \varepsilon_j^2 \mathbb{M}\ddot{u}^{\varepsilon_j}(r) - D_x\mc E(t,{u}^{\varepsilon_j}(r)))\d r\,.
	\end{equation}	
	Following \cite[Proposition 5.8]{ScilSol}, we now set
	\begin{equation}\label{defvk}
		v_j(\tau):=u^{\varepsilon_j}(\varepsilon_j\tau+t_j^-),\quad \text{ for every }\tau\in[0,\sigma_j]\,,
	\end{equation}
	where we denoted by $\sigma_j$ the ratio $\frac{t_j^{+}-t_j^{-}}{\varepsilon_j}$. Then, through the change of variables $r=\varepsilon_j\tau+t_j^{-}$ and recalling the one-homogeneity of $p$ with respect to the first variable we obtain
	\begin{equation*}
		\begin{split}
			\quad\int_{t_j^-}^{t_j^+}p(\dot{u}^{\varepsilon_j}(r),- \varepsilon_j^2 \mathbb{M}\ddot{u}^{\varepsilon_j}(r) - D_x\mc E(t,{u}^{\varepsilon_j}(r)))\d r= \int_{0}^{\sigma_j}p(\dot{v}_{j}(\tau),-\mathbb{M}\ddot{v}_{j}(\tau)-D_x\mc E(t,v_{j}(\tau)))\,\mathrm{d}\tau\,.
		\end{split}
	\end{equation*}
	
	We also notice that \eqref{hyp1}-\eqref{hyp2} can be re-read for $v_j$ as
	\begin{equation}\label{rewrite}
		\begin{split}
			&\|v_j(0)-u^-(t)\|\to 0\,, \quad \|v_j(\sigma_j)-u^+(t)\|\to 0\,,\\
			&\|\dot{v}_j(0)\|\to 0\,, \quad \|\dot{v}_j(\sigma_j)\|\to 0\,,
		\end{split}
	\end{equation}
	as $j\to +\infty$. \\	
	We now introduce the functions
	\begin{equation*}
		g(x)=3x^2-2x^3\,,\quad h(x)=-x^2(1-x)\,,\quad x\in[0,1]\,,
	\end{equation*} and the competitor
	\begin{equation*}
		\tilde{v}_j(\tau)=
		\begin{cases}
			u^-(t), & \tau\leq-1\,,\\
			u^-(t)+g(\tau+1)(v_j(0)-u^-(t))+h(\tau+1)\dot{v}_j(0)\,, & \tau\in[-1,0]\,,\\
			v_j(\tau), & \tau\in[0,\sigma_j]\,,\\
			v_j(\sigma_j)+g(1+\sigma_j-\tau) (u^+(t)-v_j(\sigma_j))-h(1+\sigma_j-\tau)\dot{v}_j(\sigma_j)\,, & \tau\in[\sigma_j,\sigma_j+1]\,,\\
			u^+(t), & \tau\geq \sigma_j+1\,.
		\end{cases}
	\end{equation*}
	For the sake of clarity we denote by $\alpha_j(\tau)$ and $\beta_j(\tau)$ the expressions of $\tilde{v}_j$ in $[-1,0]$ and $[\sigma_j,\sigma_j+1]$, respectively, and we notice that by \eqref{rewrite} $\alpha_j$ and $\beta_j$ are uniformly bounded and there holds
	\begin{equation}\label{alphabeta}
		\lim\limits_{j\to +\infty}\left(\max_{\tau\in[-1,0]}(\norm{\dot{\alpha}_j(\tau)}+\norm{\ddot{\alpha}_j(\tau)})+\max_{\tau\in[\sigma_j,\sigma_j+1]}\left(\norm{\dot{\beta}_j(\tau)}+\norm{\ddot{\beta}_j(\tau)}\right)\right)=0.
	\end{equation}
	Fix now an arbitrary $N_j \in \mathbb{N}$ with $2N_j-1>\sigma_j+1$, and recalling that $p(0,w)=0$ observe that
	\begin{align*}
		&\quad\,\,\int_{-1}^{2N_j-1}p(\dot{\tilde{v}}_{j}(\tau),-\mathbb{M}\ddot{\tilde{v}}_{j}(\tau)-D_x\mc E(t,\tilde v_{j}(\tau)))\,\mathrm{d}\tau\\
		&=\int_{0}^{\sigma_j}p(\dot{v}_{j}(\tau),-\mathbb{M}\ddot{v}_{j}(\tau)-D_x\mc E(t,v_{j}(\tau)))\,\mathrm{d}\tau\\
		&+\int_{-1}^{0}p(\dot{\alpha}_j(\tau), -\mathbb{M}\ddot{\alpha}_{j}(\tau)-D_x\mc E(t,\alpha_{j}(\tau)) )\d\tau+\int_{\sigma_j}^{\sigma_j+1}p(\dot{\beta}_j(\tau), -\mathbb{M}\ddot{\beta}_{j}(\tau)-D_x\mc E(t,\beta_{j}(\tau)))\d\tau\,.
	\end{align*}
	By means of \eqref{cont0} and  \eqref{alphabeta} it is easy to see that both terms in the last line above vanish as $j\to +\infty$. This allows us to continue \eqref{freezed} getting
	\begin{equation}\label{almostlast}
		\mc E(t,u^-(t))-\mc E(t,u^+(t))\ge \limsup_{j\to +\infty}\int_{-1}^{2N_j-1}p(\dot{\tilde{v}}_{j}(\tau),-\mathbb{M}\ddot{\tilde{v}}_{j}(\tau)-D_x\mc E(t,\tilde v_{j}(\tau)))\,\mathrm{d}\tau\,.
	\end{equation}
	With the time translation $\hat v_j(s)=\tilde{v}_j(s+N_j-1)$, we finally construct a function belonging to $V^{\mathbb{M},N_j}_{u^-(t),u^+(t)}$ (indeed notice that the bound on the second derivative follows from \eqref{alphabeta}, \eqref{defvk} and (c) in Proposition~\ref{propbounds}). From \eqref{almostlast} we thus get
	\begin{align*}
		\mc E(t,u^-(t))-\mc E(t,u^+(t))&\ge \limsup_{j\to +\infty}\int_{-1}^{2N_j-1}p(\dot{\tilde{v}}_{j}(\tau),-\mathbb{M}\ddot{\tilde{v}}_{j}(\tau)-D_x\mc E(t,\tilde v_{j}(\tau)))\,\mathrm{d}\tau\\
		&=\limsup_{j\to +\infty}\int_{-N_j}^{N_j}p(\dot{\hat{v}}_{j}(s),-\mathbb{M}\ddot{\hat{v}}_{j}(s)-D_x\mc E(t,\hat v_{j}(s)))\,\mathrm{d}s\\
		&\ge c^{\mathbb{M},p}_t(u^-(t),u^+(t))\,,
	\end{align*}
	and by the arbitrariness of $p\in RCP_\V$ we conclude.
	
	\endproof

	\section{Incremental minimization scheme}\label{sec:discrete}
	
		This last section is devoted to the proof of part (II) of Theorem~\ref{mainthm}: we namely show that IBV and IVV solutions can be also obtained as a limit of time-discrete solutions when $\eps$ and the time step $\tau$ vanish simultaneously (with a certain rate). For this, in addition to the assumptions of previous section, we need to require \ref{hyp:E3'} and \ref{hyp:E5}.
	
	Let $T>0$ and $\tau\in (0,1)$ be a fixed time step such that $\frac{T}{\tau}\in\mathbb{N}$. We consider the corresponding induced partition $\Pi_\tau:=\{t^k\}_k$ of the time-interval $[0,T]$, defined by $t^k:=k\tau$ where $k=0,1,\dots,T/\tau$. For future use we also define $t^{-1}:=-\tau$ and we set $\mathcal{K}_\tau:=\{1,\dots,T/\tau\}$ and $\mc K_\tau^0:=\mc K_\tau\cup\{0\}$. \par	
	We construct a recursive sequence $\{u^k_{\tau,\varepsilon}\}_{k\in \mc K_\tau}$ by solving the following iterated minimum problem à la Minimizing Movements:
	\begin{subequations}\label{schemeincond}
			\begin{equation}
			u^k_{\tau,\varepsilon}\in \mathop{\rm arg\,min}\limits_{x\in X}\mathcal{F}_{\tau,\varepsilon}(t^k,x, u^{k-1}_{\tau,\varepsilon}, u^{k-2}_{\tau,\varepsilon}),\quad k\in \mc K_\tau,
			\label{scheme}
		\end{equation}
		with initial conditions 
		\begin{equation}
			u^0_{\tau,\varepsilon}:=u_0^\varepsilon\,,\quad u^{-1}_{\tau,\varepsilon}:= u_0^\eps-\tau u_1^\varepsilon\,,
			\label{eq:condin}
		\end{equation}
	\end{subequations}
	where
	\begin{align*}
		\mathcal{F}_{\tau,\varepsilon}(t^k,x, u^{k-1}_{\tau,\varepsilon},u^{k-2}_{\tau,\varepsilon}):=& \frac{\varepsilon^2}{2\tau^2}\|x-2u_{\tau,\varepsilon}^{k-1}+u_{\tau,\varepsilon}^{k-2}\|^2_{\mathbb{M}}+\frac{\eps}{2\tau}|x-u_{\tau,\varepsilon}^{k-1}|^2_{\mathbb{V}}+\mc R\left({x-u_{\tau,\varepsilon}^{k-1}}\right) + \mc E(t^{k},x)\\
		&+\frac{\Lambda_\V}{4}\|x-u_{\tau,\varepsilon}^{k-1}\|^2_\mathbb{I},
	\end{align*}
	and
	\begin{equation}\label{lambdav}
		\Lambda_\mathbb V:=\begin{cases}
			0,&\text{if }\mathbb{V} \text{ is positive-definite}, \\
			\Lambda,&\text{otherwise}.
		\end{cases}
	\end{equation}
	with $\Lambda$ and $\mathbb{I}$ from \ref{hyp:E5}.\par 
	The addition in the functional $\mc F_{\tau,\eps}$ of the last fictitious viscous term, which by definition \eqref{lambdav} of $\Lambda_\V$ is present only if $\V$ is not positive-definite, is needed to deal with the $\Lambda$-convexity assumption \ref{hyp:E5}. If $\V$ is positive-definite and the ratio $\frac \eps\tau$ is the large enough (see \eqref{largeratio}), the second term will be enough to keep $\Lambda$-convexity under control.
	
	We observe that the existence of a minimum in \eqref{scheme} easily follows from the direct method. Furthermore, if $\frac{\eps^2}{\tau^2}$ is large enough (this is the case under the assumption \eqref{eq:vanratio} needed to conclude the whole argument), the minimum is unique by strict convexity of the functional.\par  
	By defining $v_{\tau,\varepsilon}^k:=\frac{u_{\tau,\varepsilon}^k-u_{\tau,\varepsilon}^{k-1}}{\tau}$, we notice that the Euler Lagrange equation solved by $u_{\tau,\varepsilon}^k$ reads as
	\begin{equation}\label{EL}
		\eps^2 \mathbb{M}\frac{v_{\tau,\varepsilon}^k-v^{k-1}_{\tau,\varepsilon}}{\tau}+\eps\mathbb{V} v_{\tau,\varepsilon}^k +\partial\mc R(v_{\tau,\varepsilon}^k)+D_x\mc E(t^k,u_{\tau,\varepsilon}^k)+\frac{\Lambda_\V}{2} \tau \mathbb{I} v_{\tau,\varepsilon}^k\ni 0.
	\end{equation}
	We also observe that by \eqref{eq:condin} one has $v^0_{\tau,\eps}=u_1^\eps$. Thus, in the limit as $\tau\to 0$ with $\eps$ fixed, we formally (but this could be actually made rigorous, see for instance \cite{RosThom}) recover the dynamic problem \eqref{mainprob}.\medskip
 
	In order to enlighten the notation, from now on we will drop the dependence on $\tau,\varepsilon$ in $u^k_{\tau,\varepsilon}$ and $v^k_{\tau,\varepsilon}$, and we will simply write $u^k$, $v^k$.\par 
	As in the continuous counterpart developed in Section~\ref{sec:slowload}, the first step in the analysis consists in finding uniform a priori estimates, which usually follows by combining an energy inequality together with Gr\"onwall's Lemma. In the discrete setting, we employ the following version of the discrete Gr\"onwall's inequality, whose proof can be found for instance in \cite[Appendix A]{Krusebook}.\par 
	We want to stress that here and hencefort we adopt the convention that an empty sum is equal to $0$.	
	
	\begin{lemma}[\textbf{Gr\"onwall}]\label{discrgron}
		Let $\{\gamma^n\}_{n\in\N}$, $\{f^n\}_{n\in\N}$ be two nonnegative sequences, and let $c\ge 0$. If
		\begin{equation*}
			\gamma^n\le c+\sum_{k=1}^{n-1}f^k\gamma^k,\quad\text{for every }n\in\N,
		\end{equation*}
		then one has
		\begin{equation*}
			\gamma^n\le c\, {\rm exp} {\left(\sum_{k=1}^{n-1}f^k\right)},\quad\text{for every }n\in\N.
		\end{equation*}
	\end{lemma}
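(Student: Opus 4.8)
The plan is to reduce the implicit inequality to an explicit one-step recursion. Introduce the auxiliary sequence $\Gamma^n := c + \sum_{k=1}^{n-1} f^k \gamma^k$; by hypothesis $\gamma^n \le \Gamma^n$ for every $n\in\N$, so it suffices to prove the exponential bound for $\Gamma^n$ and then invoke this domination.

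First I would record the two basic facts about $\Gamma^n$. With the empty-sum convention adopted in the paper, $\Gamma^1 = c$. Moreover, directly from the definition, $\Gamma^{n+1} = \Gamma^n + f^n \gamma^n$ for every $n$. Using $\gamma^n \le \Gamma^n$ together with $f^n \ge 0$ (which also guarantees $\Gamma^n \ge 0$, so that the inequality is preserved upon multiplication by $f^n$), this gives the recursive estimate $\Gamma^{n+1} \le (1 + f^n)\Gamma^n$.

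Iterating from $n=1$ — a one-line induction on $n$ whose base case $n=1$ is just $\Gamma^1 = c$ — yields $\Gamma^n \le \Gamma^1 \prod_{k=1}^{n-1}(1+f^k) = c \prod_{k=1}^{n-1}(1+f^k)$. Finally, the elementary inequality $1+x \le e^x$, valid for all $x \ge 0$, turns the product into $\prod_{k=1}^{n-1}(1+f^k) \le \exp\left(\sum_{k=1}^{n-1} f^k\right)$, and combining the two displays gives $\gamma^n \le \Gamma^n \le c\exp\left(\sum_{k=1}^{n-1} f^k\right)$, which is the claim.

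There is no substantial obstacle here: the argument is entirely elementary. The only points that deserve a moment's attention are the empty-sum convention in the base cases (already flagged in the text) and the role of nonnegativity of both sequences, which is used twice — once to keep $\Gamma^n \ge 0$ so that multiplying $\gamma^n \le \Gamma^n$ by $f^n \ge 0$ does not reverse the inequality, and once to ensure the arguments of $1+x \le e^x$ lie in the range where that bound holds.
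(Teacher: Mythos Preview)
Your proof is correct and entirely standard. The paper does not actually supply its own proof of this lemma: it merely cites \cite[Appendix A]{Krusebook}. Your argument via the auxiliary sequence $\Gamma^n$, the one-step recursion $\Gamma^{n+1}\le(1+f^n)\Gamma^n$, and the bound $1+x\le e^x$ is precisely the classical route and would be a perfectly acceptable self-contained replacement for the citation.
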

	
	\begin{prop}\label{prop:inequality0}
		For every $m,n\in\mathcal{K}^0_\tau$ with $m\le n$ the following discrete energy inequality holds true:
		\begin{equation}
			\begin{split}
				&\quad\frac{\varepsilon^2}{2}{\left\|v^n\right\|^2_{\mathbb{M}}} + \mc E(t^{n},u^{n}) + \sum_{k=m+1}^n\tau\mc R(v^k)+\sum_{k=m+1}^n \tau\left(\varepsilon|v^k|_{\mathbb{V}}^2-\frac{\Lambda-\Lambda_\V}{2}\tau\|v^k\|_\mathbb{I}^2\right)\\
				& \leq  \frac{\varepsilon^2}{2}{\|v^m\|^2_{\mathbb{M}}} + \mc E (t^m,u^m) + \sum_{k=m+1}^n \int_{t^{k-1}}^{t^k}\partial_t\mc E(r,u^{k-1})\,\mathrm{d}r \,.
			\end{split}
			\label{eq:eniquality1}
		\end{equation}
		Furthermore, if $u_0^\varepsilon$ and $\varepsilon u_1^\varepsilon$ are uniformly bounded and
		\begin{equation}\label{largeratio}
			\frac \tau\eps\le \frac{2}{\Lambda V I},\quad\text{ if }\V \text{ is positive-definite},
		\end{equation}
		then there exists $C>0$, independent of $\varepsilon$ and $\tau$, such that 
		\begin{equation}
			\frac{\varepsilon^2}{2}{\left\|v^n\right\|^2_{\mathbb{M}}} + \mc E(t^{n},u^{n}) + \sum_{k=1}^n \tau\mc R(v^k)\leq C,
			\label{eq:bound0}
		\end{equation}
		for every $n\in\mc K^0_\tau$.
		
	\end{prop}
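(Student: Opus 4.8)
The plan is to derive the discrete energy inequality \eqref{eq:eniquality1} by testing the Euler--Lagrange inclusion \eqref{EL} (equivalently, by using minimality in \eqref{scheme}) at each step, summing over $k$, and then telescoping; afterwards, the uniform bound \eqref{eq:bound0} follows by absorbing the bad terms and applying the discrete Gr\"onwall Lemma~\ref{discrgron}.

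\textbf{Step 1: one-step estimate.} Since $u^k$ minimizes $x\mapsto \mc F_{\tau,\eps}(t^k,x,u^{k-1},u^{k-2})$, comparing with the competitor $x=u^{k-1}$ gives
\begin{equation*}
\frac{\eps^2}{2\tau^2}\|u^k-2u^{k-1}+u^{k-2}\|_\M^2+\frac{\eps}{2\tau}|u^k-u^{k-1}|_\V^2+\mc R(u^k-u^{k-1})+\mc E(t^k,u^k)+\frac{\Lambda_\V}{4}\|u^k-u^{k-1}\|_\mathbb I^2\le \frac{\eps^2}{2\tau^2}\|u^{k-1}-u^{k-2}\|_\M^2+\mc E(t^k,u^{k-1}).
\end{equation*}
This is a bit too lossy, so instead I would work directly from \eqref{EL}: pairing the inclusion with $v^k_{\tau,\eps}=(u^k-u^{k-1})/\tau$ and multiplying by $\tau$, one gets, using $\langle\xi,v^k\rangle\le\mc R(v^k)$ is wrong-signed — rather, one picks $\xi\in\partial\mc R(v^k)$ with $\langle\xi,v^k\rangle=\mc R(v^k)$, and the crucial discrete identity for the inertial term
\begin{equation*}
\eps^2\Big\langle \M\frac{v^k-v^{k-1}}{\tau},v^k\Big\rangle\tau=\frac{\eps^2}{2}\|v^k\|_\M^2-\frac{\eps^2}{2}\|v^{k-1}\|_\M^2+\frac{\eps^2}{2}\|v^k-v^{k-1}\|_\M^2\ge \frac{\eps^2}{2}\|v^k\|_\M^2-\frac{\eps^2}{2}\|v^{k-1}\|_\M^2.
\end{equation*}
For the energy term I would use the $\Lambda$-convexity consequence \eqref{eq:assumption} in the form $\langle D_x\mc E(t^k,u^k),-v^k\rangle\tau=\langle D_x\mc E(t^k,u^k),u^{k-1}-u^k\rangle\le\mc E(t^k,u^{k-1})-\mc E(t^k,u^k)+\frac{\Lambda}{2}\|v^k\|_\mathbb I^2\tau^2$, and then rewrite $\mc E(t^k,u^{k-1})-\mc E(t^{k-1},u^{k-1})=\int_{t^{k-1}}^{t^k}\partial_t\mc E(r,u^{k-1})\,\mathrm d r$ using \ref{hyp:E1}. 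The $\V$-term contributes $\eps|v^k|_\V^2\tau$, and the fictitious term $\frac{\Lambda_\V}{2}\|v^k\|_\mathbb I^2\tau^2$. Combining, the $\frac{\Lambda}{2}\|v^k\|_\mathbb I^2\tau^2$ from the energy is partially cancelled by $-\frac{\Lambda_\V}{2}\|v^k\|_\mathbb I^2\tau^2$, leaving exactly the term $-\frac{\Lambda-\Lambda_\V}{2}\tau\|v^k\|_\mathbb I^2$ appearing in \eqref{eq:eniquality1}.

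\textbf{Step 2: summation.} Summing the one-step inequality over $k=m+1,\dots,n$ telescopes the $\frac{\eps^2}{2}\|v^k\|_\M^2$ terms and the $\mc E(t^k,\cdot)$ terms, yielding exactly \eqref{eq:eniquality1}.

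\textbf{Step 3: a priori bound.} When $\V$ is positive-definite, $\Lambda_\V=0$, so the troublesome term is $\eps|v^k|_\V^2\tau-\frac{\Lambda}{2}\tau^2\|v^k\|_\mathbb I^2\ge\big(\frac{\eps}{V}-\frac{\Lambda I}{2}\tau\big)\tau\|v^k\|^2\ge 0$ precisely under the ratio condition \eqref{largeratio} (using $|v^k|_\V^2\ge\frac1V\|v^k\|^2$ and $\|v^k\|_\mathbb I^2\le I\|v^k\|^2$), so that whole sum can be dropped. When $\V$ is not positive-definite, $\Lambda_\V=\Lambda$, so $\Lambda-\Lambda_\V=0$ and the term vanishes identically; again it can be dropped, keeping $\eps|v^k|_\V^2\tau\ge0$. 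In either case \eqref{eq:eniquality1} with $m=0$ reduces to
\begin{equation*}
\frac{\eps^2}{2}\|v^n\|_\M^2+\mc E(t^n,u^n)+\sum_{k=1}^n\tau\mc R(v^k)\le\frac{\eps^2}{2}\|v^0\|_\M^2+\mc E(t^0,u^0)+\sum_{k=1}^n\int_{t^{k-1}}^{t^k}\partial_t\mc E(r,u^{k-1})\,\mathrm d r.
\end{equation*}
The first two right-hand terms are bounded by hypothesis ($v^0=u_1^\eps$, so $\eps^2\|v^0\|_\M^2=\|\eps u_1^\eps\|_\M^2$ is bounded, and $u^0=u_0^\eps$ bounded with $\mc E$ continuous). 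For the last term I would estimate $|\partial_t\mc E(r,u^{k-1})|$ using \ref{hyp:E3'}, i.e. $a(y)=y+a_1$, to get $|\partial_t\mc E(r,u^{k-1})|\le(\mc E(r,u^{k-1})+a_1)b(r)$; but $\mc E(r,u^{k-1})$ is not yet controlled, so I compare it to $\mc E(t^{k-1},u^{k-1})$ via \eqref{rmkLip}-type bounds (or re-run Gr\"onwall on the energy as in the line preceding \eqref{rmkLip}), obtaining $\mc E(r,u^{k-1})\le(\mc E(t^{k-1},u^{k-1})+a_1)e^{\int_0^T b}$. Plugging in, the inequality takes the Gr\"onwall form
\begin{equation*}
\gamma^n:=\frac{\eps^2}{2}\|v^n\|_\M^2+\mc E(t^n,u^n)+a_1\le c+\sum_{k=1}^{n-1}\Big(\int_{t^{k-1}}^{t^k}b(r)\,\mathrm d r\Big)e^{\int_0^T b}\,\gamma^k,
\end{equation*}
dropping the nonnegative $\sum\tau\mc R(v^k)$ on the left for this step; then Lemma~\ref{discrgron} gives $\gamma^n\le c\exp\big(e^{\int_0^T b}\int_0^T b\big)$, a constant independent of $\tau,\eps$. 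Finally, reinserting this bound back into the summed inequality controls $\sum_{k=1}^n\tau\mc R(v^k)$ as well, giving \eqref{eq:bound0}.

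\textbf{Main obstacle.} The delicate point is the bookkeeping in Step 1 that makes the $\Lambda$-convexity error term come out to be exactly $\frac{\Lambda-\Lambda_\V}{2}\tau\|v^k\|_\mathbb I^2$ (rather than something merely comparable), and in Step 3 checking that the ratio condition \eqref{largeratio} is precisely what is needed to absorb it when $\V>0$ while the fictitious term handles the semidefinite case — this dichotomy, built into the definition \eqref{lambdav} of $\Lambda_\V$, is the whole reason the fictitious viscous term was added to $\mc F_{\tau,\eps}$. Everything else is routine telescoping and discrete Gr\"onwall.
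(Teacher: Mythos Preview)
Your proposal is correct and follows essentially the same route as the paper: test the Euler--Lagrange inclusion \eqref{EL} with $\tau v^k$, use the discrete polarization inequality for the inertial term, the $\Lambda$-convexity consequence \eqref{eq:assumption} for the energy gradient, telescope, and then invoke discrete Gr\"onwall via \ref{hyp:E3'}. The only differences are cosmetic: in Step~3 the paper keeps $\sum_{k=1}^n\tau\mc R(v^k)$ inside the Gr\"onwall quantity $\gamma^n$ (rather than dropping and reinserting) and uses the slightly sharper factor $e^{\int_{t^{k-1}}^{t^k}b}-1$ in place of your $e^{\int_0^T b}\int_{t^{k-1}}^{t^k}b$; also, since $\int_{t^{k-1}}^{t^k}\partial_t\mc E(r,u^{k-1})\,\mathrm d r$ is controlled by $\gamma^{k-1}$ (not $\gamma^k$), a reindexing $k\mapsto k-1$ is needed before Lemma~\ref{discrgron} applies in the stated form --- a trivial fix.
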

	
	\proof
	
	Testing \eqref{EL} by $\tau v^k$, from \eqref{2.2mielke}, \eqref{eq:assumption} and from the fact that
	\begin{equation*}
		\frac{\norm{x}^2_\mathbb{M}}{2}-\frac{\norm{y}^2_\mathbb{M}}{2}=\frac{\norm{x-y}^2_\mathbb{M}}{2}-\scal{\mathbb{M}(y-x)}{y}\ge -\scal{\mathbb{M}(y-x)}{y},
	\end{equation*}
	we deduce
	\begin{equation*}
		\begin{split}
			\tau\mc R(v^k) =  &- \left \langle \varepsilon^2\mathbb{M}\frac{v^k-v^{k-1}}{\tau}+\varepsilon\mathbb{V}v^k+D_x \mc E(t^{k},u^{k})+\frac{\Lambda_\V}{2}\mathbb{I}\tau v^k, \tau v^k\right\rangle \\
			= & - \varepsilon  \tau|v^k|_{\mathbb{V}}^2  - \varepsilon^2\left \langle \mathbb{M}(v^k-v^{k-1}), v^k\right\rangle +\left\langle D_x \mc E(t^{k},u^{k}), u^{k-1}-u^k\right\rangle- \frac{\Lambda_\V}{2}\|u^k-u^{k-1}\|^2_{\mathbb{I}}\\
			\leq &   - \varepsilon  \tau|v^k|_{\mathbb{V}}^2 + \frac{\varepsilon^2}{2}{\|v^{k-1}\|^2_{\mathbb{M}}} -  \frac{\varepsilon^2}{2}{\|v^k\|^2_{\mathbb{M}}} -\mc E(t^k,u^k) + \mc E (t^k,u^{k-1}) + \frac{\Lambda-\Lambda_\V}{2}\tau^2\|v^k\|_\mathbb{I}^2.
		\end{split}
	\end{equation*}
	Subtracting $\mc E (t^{k-1},u^{k-1})$ from both sides, rearranging the terms and summing upon $k=m,\dots,n$, we obtain \eqref{eq:eniquality1}. \\	
	We now come to the proof of \eqref{eq:bound0}. We first notice that, defining 
	\begin{equation*}
		\begin{cases}\displaystyle
			\gamma^n:= \frac{\varepsilon^2}{2}{\left\|v^n\right\|^2_{\mathbb{M}}}+\mc E(t^{n},u^{n})+\sum_{k=1}^n\tau\mc R(v^k)
			+a_1,\text{ if }n\in\mc K_\tau,\\\displaystyle 
			\gamma^0:= \frac{\varepsilon^2}{2}{\left\|u_1^\eps\right\|^2_{\mathbb{M}}} + \mc E(0,u_0^\eps)+a_1,
		\end{cases}
	\end{equation*}
	where $a_1$ is the constant appearing in \ref{hyp:E3'}, from \eqref{largeratio} it holds 	 
	\begin{equation}\label{start}
		\gamma^n\le \gamma^0+\sum_{k=1}^{n}\int_{t^{k-1}}^{t^k}\partial_t \mc E(r, u^{k-1})\d r,\quad\text{for every }n\in \mc K_\tau.
	\end{equation}
 We indeed observe that the term $\varepsilon|v^k|_{\mathbb{V}}^2-\frac{\Lambda-\Lambda_\V}{2}\tau\|v^k\|_\mathbb{I}^2$ in \eqref{eq:eniquality1} is nonnegative: if $\V$ is not positive-definite, it reduces to $\eps\tau|v^k|_\V^2$; otherwise we exploit \eqref{largeratio}:
 	\begin{align*}
 		\varepsilon\|v^k\|_{\mathbb{V}}^2-\frac{\Lambda}{2}\tau\|v^k\|_\mathbb{I}^2\ge \left(\frac\eps V-\frac \Lambda 2 I\tau \right)\|v^k\|^2\ge 0.
 	\end{align*}
	Thanks to \eqref{rmkLip} we now have
	\begin{equation*}
		\int_{t^{k-1}}^{t^k}\partial_t \mc E(r, u^{k-1})\d r\le (\mc E(t^{k-1}, u^{k-1})+a_1)\int_{t^{k-1}}^{t^k} b(r){ e}^{\int_{t^{k-1}}^{r}b(s)\d s}\d r\le \left({ e}^{\int_{t^{k-1}}^{t^k}b(r)\d r}-1\right)\gamma^{k-1},
	\end{equation*}
	and thus from \eqref{start} we infer
	\begin{align*}
		\gamma^n\le 
		\gamma^0 { e}^{\int_{0}^{\tau}b(r)\d r}+\sum_{k=1}^{n-1} \left({ e}^{\int_{t^{k}}^{t^{k+1}}b(r)\d r}-1\right)\gamma^{k},\quad\text{for every }n\in \mc K_\tau.
	\end{align*}
	Hence, by means of Lemma~\ref{discrgron} we get
	\begin{equation}\label{discrGron}
		\gamma^n\le \gamma^0 { e}^{\int_{0}^{\tau}b(r)\d r} {\rm exp}\left(\sum_{k=1}^{n-1}\left({ e}^{\int_{t^{k}}^{t^{k+1}}b(r)\d r}-1\right)\right),\quad\text{for every }n\in \mc K_\tau.
	\end{equation}
	By defining $B:=\int_{0}^{T}b(r)\d r$ and recalling the elementary inequality
	\begin{equation*}
		{e^x-1}\le \frac{e^B-1}{B}x,\quad\text{for every }x\in [0,B],
	\end{equation*}
	from \eqref{discrGron} we finally obtain
	\begin{equation*}
		\gamma^n\le \gamma^0 e^{e^B-1},\quad\text{for every }n\in \mc K_\tau.
	\end{equation*}
	Since $\gamma^0$ is uniformly bounded by assumption we conclude.

	\endproof

	\begin{cor}\label{cor:equiboundedd}
		Assume $u_0^\varepsilon$ and $\varepsilon u_1^\varepsilon$ are uniformly bounded and \eqref{largeratio}. Then, the following uniform bounds hold for every $n\in \mc K_\tau$:
		\begin{itemize}
			\item[{(j)}] $\|u^n\|\leq C$;
			\item[{(jj)}] $\displaystyle{\varepsilon}\|v^n\|_\mathbb{M}\leq C$;
			\item[{(jjj)}] $\displaystyle{\varepsilon^2}\left\|\mathbb{M}\frac{v^n-v^{n-1}}{\tau}\right\|_*\leq C$;
			\item[{(jjjj)}] $\displaystyle\sum_{k=1}^n\tau\mc R(v^k)\leq C$.
		\end{itemize}
		
	\end{cor}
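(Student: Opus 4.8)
The plan is to derive all four bounds from the single a priori estimate \eqref{eq:bound0} of Proposition~\ref{prop:inequality0} (whose hypotheses are exactly the present ones), supplemented, for (jjj), by the Euler--Lagrange equation \eqref{EL}. Since $\mc E\ge 0$ and $\mc R\ge 0$, each of the three summands on the left-hand side of \eqref{eq:bound0} is by itself $\le C$ for every $n\in\mc K_\tau^0$: the one involving $\|v^n\|_\mathbb{M}^2$ yields $\varepsilon\|v^n\|_\mathbb{M}\le\sqrt{2C}$, which is (jj), while the term $\sum_{k=1}^n\tau\mc R(v^k)\le C$ is precisely (jjjj).

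For (j) I would not appeal to any coercivity of $\mc E$ (which is not among our assumptions) but argue via the dissipation, exactly as for Proposition~\ref{propbounds}(a). From \eqref{Rbounds} and (jjjj),
\[
\sum_{k=1}^n\tau\|v^k\|\le\frac1{\alpha_*}\sum_{k=1}^n\tau\mc R(v^k)\le\frac{C}{\alpha_*},
\]
so the telescoping identity $u^n=u^0+\sum_{k=1}^n\tau v^k$ and the uniform boundedness of $u^0=u_0^\varepsilon$ give $\|u^n\|\le\|u_0^\varepsilon\|+C/\alpha_*\le C$.

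Finally, for (jjj) I would isolate the inertial term in \eqref{EL}: there exists $\xi^k\in\partial\mc R(v^k)$ with
\[
\varepsilon^2\mathbb{M}\frac{v^k-v^{k-1}}{\tau}=-\xi^k-\varepsilon\mathbb{V}v^k-D_x\mc E(t^k,u^k)-\frac{\Lambda_\V}{2}\tau\mathbb{I}v^k,
\]
and bound the four terms on the right in $\|\cdot\|_*$ one by one. By \eqref{2.2mielke} and \eqref{boundedness}, $\|\xi^k\|_*\le\alpha^*$. For the viscous term, $\|\varepsilon\mathbb{V}v^k\|_*\le\|\mathbb{V}\|_{\rm op}\,\varepsilon\|v^k\|\le\sqrt{M}\,\|\mathbb{V}\|_{\rm op}\,\varepsilon\|v^k\|_\mathbb{M}$, which is bounded by (jj). For the energy term, assumption \ref{hyp:E2} makes $D_x\mc E$ continuous on $[0,T]\times X$, hence bounded on the compact set $[0,T]\times\overline{B_C}$ which, by (j), contains every $(t^k,u^k)$. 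For the last term, $\tau\|v^k\|=\|u^k-u^{k-1}\|\le 2C$ by (j), so $\|\frac{\Lambda_\V}{2}\tau\mathbb{I}v^k\|_*\le\Lambda_\V\|\mathbb{I}\|_{\rm op}C$. Summing the four estimates yields (jjj).

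The argument is essentially routine; the only point requiring care is to avoid the naive estimates that would produce diverging factors such as $\varepsilon/\tau$ — so the viscous term must be controlled through (jj) rather than through $\tau\|v^k\|$, and the $\mathbb{I}$-term through (j). Note also that, since one uses the equation \eqref{EL} directly rather than the finite difference $v^k-v^{k-1}$, the bound (jjj) holds already at $k=1$, with no need to control $v^0=u_1^\varepsilon$.
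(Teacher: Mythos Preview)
Your proof is correct and follows essentially the same route as the paper: (jj) and (jjjj) are read off directly from \eqref{eq:bound0}, (j) comes from the telescoping identity combined with \eqref{Rbounds} and (jjjj), and (jjj) is obtained by isolating the inertial term in the Euler--Lagrange equation \eqref{EL} and bounding the remaining pieces via \eqref{boundedness}, \ref{hyp:E2}, (j) and (jj). Your treatment of (jjj) is simply more explicit than the paper's, which only lists the ingredients; in particular your observation that the $\mathbb{I}$-term must be handled through (j) rather than through (jj) (since \eqref{largeratio} is only assumed when $\mathbb{V}$ is positive-definite, in which case $\Lambda_\V=0$ and that term vanishes anyway) is a useful clarification.
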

	
	\proof
	The bounds $(jj)$, $(jjjj)$ can be easily inferred from \eqref{eq:bound0}.  
	We then prove $(j)$. Let $n\in\mc K_\tau$ be fixed; then, with $(jjjj)$, \eqref{Rbounds} and the triangle inequality we have
	\begin{equation*}
		\|u^n\| \leq \|u_0^\varepsilon\|+ \sum_{k=1}^n\|u^k-u^{k-1}\|=\|u_0^\varepsilon\|+ \sum_{k=1}^n\tau\|v^k\| \leq C\,.
	\end{equation*}
	For what concerns $(jjj)$, it can be obtained from the Euler-Lagrange equation \eqref{EL} taking into account \eqref{boundedness}, \ref{hyp:E2} and $(jj)$.
	
	\endproof
	
	\subsection{The main interpolants}
	Once the discrete bounds are obtained, in order to retrieve the continuous framework we need to introduce suitable interpolants of the discrete-in-time sequence $\{u^k\}_{k\in\mc K^0_\tau}$. First, we denote by $\overline{u}_{\tau,\varepsilon}$ (resp., $\underline{u}_{\tau,\varepsilon}$) the left-continuous (resp., right-continuous) piecewise constant interpolant of $\{u^k\}_{k\in\mc K^0_\tau}$, defined by 
	\begin{equation}\label{constantinterpolant}
		\overline{u}_{\tau,\varepsilon}(t):=u^{k},\quad \text{ for } t\in(t^{k-1},t^k],\,\quad \underline{u}_{\tau,\varepsilon}(t):=u^{k-1}\quad \text{ for } t\in[t^{k-1},t^k),\,\, k\in\mathcal{K}^0_\tau\,, 
	\end{equation}
	respectively.  
	We denote by $\widehat{u}_{\tau,\varepsilon}$ the piecewise affine interpolant of $\{u^k\}_{k\in\mc K^0_\tau}$, defined as
	\begin{equation}\label{affineinterpolant}
		\widehat{u}_{\tau,\varepsilon}(t):=\frac{u^k-u^{k-1}}{\tau}(t-t^{k-1})+u^{k-1}= v^k(t-t^{k-1})+u^{k-1},\quad\text{for } t\in(t^{k-1},t^k],\quad k\in\mc K^0_\tau.
	\end{equation}

	Since in the definition of inertial cost \eqref{eq:cost} a second derivative is present, we also need to keep track of its discrete counterpart $\frac{v^k-v^{k-1}}{\tau}$. This is done by finally introducing the function $\widetilde{u}_{\tau,\varepsilon}$ such that $\widetilde{u}_{\tau,\varepsilon}(0)=u^0_\eps$ and whose first derivative is the piecewise affine interpolant of $\{v^k\}_{k\in\mc K^0_\tau}$; namely,
	\begin{subequations}\label{quadraticinterpolant}
		\begin{equation}
			\widetilde{u}_{\tau,\varepsilon}(t):=u^0_\eps+\int_{0}^{t}\dot{\widetilde{u}}_{\tau,\eps}(r)\d r,\quad\quad\text{for }t\in [0,T],
		\end{equation}	
		with
		\begin{equation}
			\dot{\widetilde{u}}_{\tau,\eps}(t):=\frac{v^k-v^{k-1}}{\tau}(t-t^{k-1})+v^{k-1},\quad\text{for } t\in(t^{k-1},t^k],\quad k\in\mc K_\tau.
		\end{equation}
	\end{subequations}
	Notice indeed that $\widetilde{u}_{\tau,\varepsilon}$ is in $W^{2,\infty}(0,T;X)$ with
	\begin{equation*}
		\ddot{\widetilde{u}}_{\tau,\eps}(t)=\frac{v^k-v^{k-1}}{\tau},\quad\text{for } t\in(t^{k-1},t^k),\quad k\in\mc K_\tau.
	\end{equation*}
	Thus, thanks to $(jjj)$ of Corollary~\ref{cor:equiboundedd}, the function $\widetilde{u}_{\tau,\varepsilon}$ is the correct ``discrete'' counterpart of the continuous dynamic solution $\xeps$ to \eqref{mainprob}.
	
	For any $t\in(-\tau,T]$, we also denote by $\mathfrak{t}_\tau$ the least point of partition $\Pi_\tau$ which is greater or equal to $t$; i.e., it is defined as
	\begin{equation}
		\mathfrak{t}_\tau:=\min\{r\in\Pi_\tau:\, r\geq t\}.
		\label{node}
	\end{equation}
	Note that $\mathfrak{t}_\tau\searrow t$ as $\tau\to0$ (for $t\in [0,T]$).	\\
	We finally define a piecewise constant interpolant of the values $\mc E(\cdot,u)$, setting for every $u\in X$
	\begin{equation*}
		\mc E_\tau(t,u):= \mc E(t^k,u),\quad \text{if }t\in(t^{k-1},t^k],\,\,\, k\in \mc K^0_\tau.
	\end{equation*}	
	From assumptions \ref{hyp:E1} and \ref{hyp:E2} we deduce that, in the limit as $\tau\to0$, 
	\begin{equation}
		\mc E_\tau(t,u) \to \mc E(t,u) \quad \mbox{ and }\quad D_x \mc E_\tau(t,u) \to D_x \mc E(t,u)\,,
		\label{eq:energyinterp}
	\end{equation}
	uniformly with respect to $(t,u)\in[0,T]\times \overline{B_R}$.\\	
	In terms of interpolants, the energy inequality \eqref{eq:eniquality1} can be rewritten as 
	\begin{equation*}
		\begin{split}
			&\displaystyle\frac{\varepsilon^2}{2}\|\dot{\widehat{u}}_{\tau,\varepsilon}(t)\|^2_{\mathbb{M}}+\mc E_\tau(t,\overline{u}_{\tau,\varepsilon}(t))+\int_{\mathfrak{s}_\tau}^{\mathfrak{t}_\tau}\mc R(\dot{\widehat{u}}_{\tau,\varepsilon}(r))\,\mathrm{d}r+\int_{\mathfrak{s}_\tau}^{\mathfrak{t}_\tau} \left(\eps|\dot{\widehat{u}}_{\tau,\varepsilon}(r)|_{\mathbb{V}}^2-\frac{\Lambda-\Lambda_\V}{2}\tau\|\dot{\widehat{u}}_{\tau,\varepsilon}(r)\|^2_\mathbb{I}\right)\mathrm{d}r \\
			& \le  \displaystyle\frac{\varepsilon^2}{2}\|\dot{\widehat{u}}_{\tau,\varepsilon}(s)\|^2_{\mathbb{M}}+\mc E_\tau(s,\overline{u}_{\tau,\varepsilon}(s))  + \int_{\mathfrak{s}_\tau}^{\mathfrak{t}_\tau}\partial_t\mc E(r,\underline{u}_{\tau,\varepsilon}(r))\,\mathrm{d}r ,
		\end{split}
	\end{equation*}
	for every $s,t\in(-\tau,T]\backslash\Pi_\tau$ with $s\le t$. Furthermore, Proposition~\ref{prop:inequality0} and the subsequent Corollary~\ref{cor:equiboundedd} can be re-read as follows.
	
	\begin{cor}\label{prop:inequality1}
		Assume that $u_0^\varepsilon$ and $\varepsilon u_1^\varepsilon$ are uniformly bounded and \eqref{largeratio}. Then, there exists $C>0$, independent of $\tau$ and $\varepsilon$, such that 
		\begin{equation*}
			\frac{\varepsilon^2}{2}\|\dot{\widehat{u}}_{\tau,\varepsilon}(t)\|^2_{\mathbb{M}}+\mc E_\tau(t,\overline{u}_{\tau,\varepsilon}(t)) + \int_0^{\mathfrak{t}_\tau}\mc R(\dot{\widehat{u}}_{\tau,\varepsilon}(r))\,\mathrm{d}r \leq C,
		\end{equation*}
		for every $t\in(-\tau,T]\backslash\Pi_\tau$.\par 
		Moreover, up to enlarging the constant $\overline{C}$ appearing in Proposition~\ref{propbounds}, there holds
		\begin{enumerate}
			\item[\rm (a)] $\max\limits_{t\in [0,T]}\|\overline{u}_{\tau,\varepsilon}(t)\|\leq \overline{C}$;
			\item[\rm (b)] $\max\limits_{t\in [0,T]\backslash\Pi_\tau}\varepsilon\|\dot{\widehat{u}}_{\tau,\varepsilon}(t)\|_\M\leq \overline{C}$;
			\item[\rm (c)] $\max\limits_{t\in [0,T]\backslash\Pi_\tau}\varepsilon^2\|\M\ddot{\widetilde{u}}_{\tau,\varepsilon}(t)\|_*\leq \overline{C}$;
			\item[\rm (d)] $\displaystyle\int_0^T\mc R(\dot{\widehat{u}}_{\tau,\varepsilon}(r))\,\mathrm{d}r\leq \overline{C}$.
		\end{enumerate}
	\end{cor}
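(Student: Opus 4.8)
The plan is to read off the statement directly from the discrete energy inequality \eqref{eq:eniquality1}, the bound \eqref{eq:bound0} of Proposition~\ref{prop:inequality0}, and the a priori bounds of Corollary~\ref{cor:equiboundedd}, after identifying each interpolant with the corresponding discrete quantity. The elementary facts to be used are that, for $t\in(t^{k-1},t^k)$ with $k\in\mc K_\tau^0$, one has $\dot{\widehat u}_{\tau,\eps}(t)=v^k$, $\ddot{\widetilde u}_{\tau,\eps}(t)=\frac{v^k-v^{k-1}}{\tau}$ and $\overline u_{\tau,\eps}(t)=u^k$, whence $\mc E_\tau(t,\overline u_{\tau,\eps}(t))=\mc E(t^k,u^k)$; moreover, whenever $\mathfrak t_\tau=t^n$ the function $\mc R(\dot{\widehat u}_{\tau,\eps})$ equals the constant $\mc R(v^k)$ on each subinterval $(t^{k-1},t^k)$, so that $\int_0^{\mathfrak t_\tau}\mc R(\dot{\widehat u}_{\tau,\eps}(r))\d r=\sum_{k=1}^n\tau\mc R(v^k)$.

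First I would establish the displayed inequality. Fix $t\in(-\tau,T]\setminus\Pi_\tau$ and let $n\in\mc K_\tau^0$ be the unique index with $t\in(t^{n-1},t^n)$, so that $\mathfrak t_\tau=t^n$. If $n\ge 1$, by the identities above the left-hand side equals $\frac{\eps^2}{2}\|v^n\|_\M^2+\mc E(t^n,u^n)+\sum_{k=1}^n\tau\mc R(v^k)$, which is bounded by the constant $C$ of \eqref{eq:bound0}, independent of $\tau$ and $\eps$. If instead $n=0$, i.e. $t\in(-\tau,0)$, the integral is empty and, recalling $v^0=u_1^\eps$ and $u^0=u_0^\eps$, the left-hand side reduces to $\frac{\eps^2}{2}\|u_1^\eps\|_\M^2+\mc E(0,u_0^\eps)$, which is uniformly bounded by the assumptions on $u_0^\eps$ and $\eps u_1^\eps$ together with the continuity of $\mc E$; this is precisely the uniform bound on $\gamma^0$ exploited in the proof of Proposition~\ref{prop:inequality0}.

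For the a priori bounds (a)--(d), I would translate Corollary~\ref{cor:equiboundedd} verbatim: (a) follows from (j), since $\overline u_{\tau,\eps}$ takes only the values $u^k$, $k\in\mc K_\tau^0$; (b) from (jj) via $\dot{\widehat u}_{\tau,\eps}(t)=v^k$ on $(t^{k-1},t^k)$; (c) from (jjj) via $\ddot{\widetilde u}_{\tau,\eps}(t)=\frac{v^k-v^{k-1}}{\tau}$ on $(t^{k-1},t^k)$; (d) from (jjjj) with $n=T/\tau$, using once more the above identity for $\int_0^{\mathfrak t_\tau}\mc R(\dot{\widehat u}_{\tau,\eps})$. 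It then suffices to enlarge, if needed, the constant $\overline C$ of Proposition~\ref{propbounds} so that it dominates all the constants produced in this step.

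I do not expect a genuine obstacle here: the only care required is in the bookkeeping of which interpolant corresponds to which discrete object, and in handling the boundary subinterval $(-\tau,0)$ and the nodes $\Pi_\tau$ (which are excluded from the statement). All the substantive work --- the discrete energy inequality \eqref{eq:eniquality1}, the nonnegativity of the combined term $\eps|v^k|_\V^2-\frac{\Lambda-\Lambda_\V}{2}\tau\|v^k\|_\mathbb{I}^2$ under \eqref{largeratio}, and the discrete Gr\"onwall estimate --- has already been carried out in Proposition~\ref{prop:inequality0} and Corollary~\ref{cor:equiboundedd}.
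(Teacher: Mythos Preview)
Your proposal is correct and matches the paper's approach: the paper does not even write out a proof for this corollary, merely introducing it with the sentence ``Proposition~\ref{prop:inequality0} and the subsequent Corollary~\ref{cor:equiboundedd} can be re-read as follows,'' which is precisely the translation of discrete quantities into interpolant language that you carry out in detail.
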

	
	The next proposition shows that the mismatch between the many interpolants defined above can be bounded by suitable ratios of the parameters $\tau$ and $\eps$. 
	\begin{prop}
		Assume that $u_0^\varepsilon$ and $\varepsilon u_1^\varepsilon$ are uniformly bounded and \eqref{largeratio}. Then we have
		\begin{equation}
			\max\limits_{t\in [0,T]}\big\{\|\overline{u}_{\tau,\varepsilon}(t)-\underline{u}_{\tau,\varepsilon}(t)\|+\|\overline{u}_{\tau,\varepsilon}(t)-\widehat{u}_{\tau,\varepsilon}(t)\|+\|\widetilde{u}_{\tau,\varepsilon}(t)-\widehat{u}_{\tau,\varepsilon}(t)\|\big\}\le C\frac \tau\eps.
			\label{eq:unifconv}
		\end{equation}
		Moreover, it holds
		\begin{equation}\label{l2convtris}
			\max\limits_{t\in [0,T]\backslash\Pi_\tau}\|\dot{\widetilde{u}}_{\tau,\varepsilon}(t)-\dot{\widehat{u}}_{\tau,\varepsilon}(t)\|\leq C\frac{\tau}{\eps^2}\,.
		\end{equation}
		\label{prop:convergence}
	\end{prop}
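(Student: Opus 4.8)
The plan is to estimate each of the four discrepancies in \eqref{eq:unifconv} and \eqref{l2convtris} by elementary telescoping/Lipschitz arguments, using only the a priori bound $(jjjj)$ of Corollary~\ref{cor:equiboundedd}, i.e. $\sum_{k=1}^{n}\tau\mc R(v^k)\le C$ (hence $\sum_k\tau\|v^k\|\le C/\alpha_*$ by \eqref{Rbounds}), and the bound $(jjj)$, i.e. $\eps^2\big\|\M\frac{v^k-v^{k-1}}{\tau}\big\|_*\le C$, which via \eqref{mass} gives $\|v^k-v^{k-1}\|\le C\tau/\eps^2$. I would fix an arbitrary $t\in(t^{k-1},t^k]$ and work on that single subinterval, since all interpolants are defined piecewise there.

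First, for the three terms in \eqref{eq:unifconv}: on $(t^{k-1},t^k]$ one has $\overline u_{\tau,\eps}(t)-\underline u_{\tau,\eps}(t)=u^k-u^{k-1}=\tau v^k$, and $\overline u_{\tau,\eps}(t)-\widehat u_{\tau,\eps}(t)=u^k-u^{k-1}-v^k(t-t^{k-1})=v^k(t^k-t)$; both are bounded in norm by $\tau\|v^k\|$. Since the single-step bound $\tau\|v^k\|\le C\tau/\eps$ follows directly from $(jj)$ (indeed $\eps\|v^k\|_\M\le\overline C$ and $\|\cdot\|$, $\|\cdot\|_\M$ are equivalent by \eqref{mass}), these two terms are $\le C\tau/\eps$. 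For the third term, $\widetilde u_{\tau,\eps}(t)-\widehat u_{\tau,\eps}(t)$: I would write $\widehat u_{\tau,\eps}(t)=u^{k-1}+v^k(t-t^{k-1})$ and note that $\widehat u_{\tau,\eps}(t^j)=u^j=\widetilde u_{\tau,\eps}(t^j)$ for all nodes $t^j$ (both interpolants agree at partition points, since $\widetilde u_{\tau,\eps}$ has $\dot{\widetilde u}_{\tau,\eps}$ the affine interpolant of the $v^k$, whose integral over $(t^{j-1},t^j)$ is $\tau\frac{v^{j-1}+v^j}{2}$ — here a small subtlety: one must check $u^j-u^{j-1}=\tau v^j$ versus $\tau\frac{v^{j-1}+v^j}{2}$). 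In fact $\widetilde u_{\tau,\eps}$ and $\widehat u_{\tau,\eps}$ do \emph{not} coincide at the nodes in general, so instead I would estimate directly: $\dot{\widetilde u}_{\tau,\eps}-\dot{\widehat u}_{\tau,\eps}$ on $(t^{k-1},t^k)$ equals $\frac{v^k-v^{k-1}}{\tau}(t-t^{k-1})+v^{k-1}-v^k=\big(\frac{t-t^{k-1}}{\tau}-1\big)(v^k-v^{k-1})$, whose norm is $\le\|v^k-v^{k-1}\|\le C\tau/\eps^2$ by $(jjj)$ and \eqref{mass}. This is precisely \eqref{l2convtris}. Then $\widetilde u_{\tau,\eps}(t)-\widehat u_{\tau,\eps}(t)=\widetilde u_{\tau,\eps}(0)-\widehat u_{\tau,\eps}(0)+\int_0^t(\dot{\widetilde u}_{\tau,\eps}-\dot{\widehat u}_{\tau,\eps})(r)\,\d r$; since both interpolants start at $u_0^\eps$ (note $\widehat u_{\tau,\eps}$ on $(t^{-1},t^0]$ has value $u^{-1}+v^0(t-t^{-1})$, and $\widetilde u_{\tau,\eps}(0)=u^0_\eps=u_0^\eps$, while $\widehat u_{\tau,\eps}(0)=u^{-1}+v^0\tau=u_0^\eps-\tau u_1^\eps+\tau u_1^\eps=u_0^\eps$), the boundary terms cancel and one is left with $\|\widetilde u_{\tau,\eps}(t)-\widehat u_{\tau,\eps}(t)\|\le\int_0^T\|\dot{\widetilde u}_{\tau,\eps}-\dot{\widehat u}_{\tau,\eps}\|\,\d r\le T\cdot C\tau/\eps^2$. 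Since $\tau/\eps^2\le C'/\eps\cdot(\tau/\eps)\cdot\eps$... — more carefully, since on the relevant range $\eps\le 1$ one can only assert $\tau/\eps\le\tau/\eps^2$, so this would give a bound $C\tau/\eps^2$, not $C\tau/\eps$; I would therefore present \eqref{l2convtris} first and then bound the third summand of \eqref{eq:unifconv} by $\min\{C\tau/\eps^2, \text{something}\}$, or — better — obtain $\|\widetilde u_{\tau,\eps}(t)-\widehat u_{\tau,\eps}(t)\|\le C\tau/\eps$ by the alternative estimate $\|\widetilde u_{\tau,\eps}(t)-u^{k-1}\|\le\tau\max_j\|v^j\|\le C\tau/\eps$ together with $\|\widehat u_{\tau,\eps}(t)-u^{k-1}\|\le\tau\|v^k\|\le C\tau/\eps$, using only $(jj)$.

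The main obstacle — and the step I would be most careful about — is this last point: getting the $\tau/\eps$ rate (rather than the weaker $\tau/\eps^2$) for $\|\widetilde u_{\tau,\eps}-\widehat u_{\tau,\eps}\|$ in \eqref{eq:unifconv}. The clean route is to bound each of $\widetilde u_{\tau,\eps}(t)$ and $\widehat u_{\tau,\eps}(t)$ against the common nodal value $u^{k-1}$: for $\widehat u_{\tau,\eps}$ this is immediate ($\|v^k(t-t^{k-1})\|\le\tau\|v^k\|$), and for $\widetilde u_{\tau,\eps}$ one writes $\widetilde u_{\tau,\eps}(t)-u^{k-1}=\widetilde u_{\tau,\eps}(t)-\widetilde u_{\tau,\eps}(t^{k-1})+(\widetilde u_{\tau,\eps}(t^{k-1})-u^{k-1})$; the first difference is $\int_{t^{k-1}}^t\dot{\widetilde u}_{\tau,\eps}$, bounded by $\tau\max(\|v^{k-1}\|,\|v^k\|)$, and for the second one telescopes $\widetilde u_{\tau,\eps}(t^{k-1})-u^{k-1}=\sum_{j=1}^{k-1}\big(\tau\frac{v^{j-1}+v^j}{2}-\tau v^j\big)=\frac\tau2\sum_{j=1}^{k-1}(v^{j-1}-v^j)=\frac\tau2(v^0-v^{k-1})$, which by $(jj)$ and $v^0=u_1^\eps$ is bounded by $C\tau/\eps$. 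Combining, $\|\widetilde u_{\tau,\eps}(t)-\widehat u_{\tau,\eps}(t)\|\le C\tau/\eps$, completing \eqref{eq:unifconv}. All constants depend only on the data through $\overline C$ and the constant $C$ of Corollary~\ref{cor:equiboundedd}, which are available under the standing hypotheses (uniform boundedness of $u_0^\eps,\eps u_1^\eps$ and \eqref{largeratio}).
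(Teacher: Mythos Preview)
Your proposal is correct and follows essentially the same route as the paper's proof: the first two terms in \eqref{eq:unifconv} and the derivative bound \eqref{l2convtris} are handled identically, and for $\|\widetilde u_{\tau,\eps}-\widehat u_{\tau,\eps}\|$ the decisive step in both arguments is the telescoping $\sum_{j=1}^{k-1}(v^{j-1}-v^j)=v^0-v^{k-1}$, which together with $(jj)$ and the boundedness of $\eps u_1^\eps$ yields the rate $\tau/\eps$. The paper organizes this last step by integrating $\dot{\widetilde u}_{\tau,\eps}-\dot{\widehat u}_{\tau,\eps}$ over $[0,t]$ and simplifying the resulting sum, while you split off the nodal mismatch $\widetilde u_{\tau,\eps}(t^{k-1})-u^{k-1}$ first; the computations are the same in substance.
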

	\proof
	We first notice that, by virtue of $(jj)$ in Corollary~\ref{cor:equiboundedd} and since $\eps u^\eps_1$ is uniformly bounded, one has
	\begin{equation}
		\max\limits_{k\in\mc K^0_\tau}\|u^k-u^{k-1}\|\leq C{\frac{\tau}{\eps}}\,.
		\label{eq:normadiff}
	\end{equation}
	Thus, let $t\in(t^{k-1},t^k]$ for some $k\in\mc K^0_\tau$. Then there holds
	\begin{equation*}
		\begin{split}
			\|\overline{u}_{\tau,\varepsilon}(t)-\underline{u}_{\tau,\varepsilon}(t)\| & \leq\|u^k-u^{k-1}\|, \\
			\|\overline{u}_{\tau,\varepsilon}(t)-\widehat{u}_{\tau,\varepsilon}(t)\| & = \|u^k-u^{k-1}\|\frac{\tau-(t-t^{k-1})}{\tau}\leq \|u^k-u^{k-1}\|\,.
		\end{split}
	\end{equation*}	
	In order to deal with the last term in \eqref{eq:unifconv} we observe that
	\begin{equation}\label{diffder}
		\dot{\widetilde{u}}_{\tau,\varepsilon}(t)-\dot{\widehat{u}}_{\tau,\varepsilon}(t)=\frac{v^k-v^{k-1}}{\tau}\left({t-t^{k-1}}-\tau\right),\quad \text{for }t\in (t^{k-1},t^k),\, k\in \mc K_\tau,
	\end{equation}
	and thus
	\begin{align*}
		&\quad\,\,\widetilde{u}_{\tau,\varepsilon}(t)-\widehat{u}_{\tau,\varepsilon}(t)=\int_{0}^{t}(\dot{\widetilde{u}}_{\tau,\varepsilon}(r)-\dot{\widehat{u}}_{\tau,\varepsilon}(r))\d r\\
		& =\frac{v^k-v^{k-1}}{\tau}\int_{t^{k-1}}^{t}\left({r-t^{k-1}}-\tau\right)\d r+\sum_{i=1}^{k-1} \frac{v^i-v^{i-1}}{\tau}\int_{t^{i-1}}^{t^i}\left({r-t^{i-1}}-\tau\right)\d r\\
		&=\frac{v^k-v^{k-1}}{2\tau}(t-t^{k-1})\left(t-t^{k-1}-2\tau\right)-\tau\sum_{i=1}^{k-1}\frac{v^i-v^{i-1}}{2}\\
		&=\frac{v^k-v^{k-1}}{2\tau}(t-t^{k-1})\left(t-t^{k-1}-2\tau\right)+\frac{\tau u^\eps_1}{2}-\frac{\tau v^{k-1}}{2}\\
		&=\frac{v^k}{2}\left(\frac{t-t^{k-1}}{\tau}\right)\left(t-t^{k-1}-2\tau\right)-\frac{v^{k-1}}{2}\frac{(t-t^{k-1}-\tau)^2}{\tau}+\frac{\tau u^\eps_1}{2}.
	\end{align*}
	Since $t-t^{k-1}\in (0,\tau)$, we now get
	\begin{align*}
		2\norm{\widetilde{u}_{\tau,\varepsilon}(t)-\widehat{u}_{\tau,\varepsilon}(t)}\le\frac\tau\eps \|\eps u^\eps_1\|+\tau\|{v^k}\|+\tau\|{v^{k-1}}\|=\frac\tau\eps \norm{\eps u^\eps_1}+\|{u^k-u^{k-1}}\|+\|{u^{k-1}-u^{k-2}}\|,
	\end{align*}
	and assertion \eqref{eq:unifconv} follows from \eqref{eq:normadiff}.\par 
	From \eqref{diffder}, the bound \eqref{l2convtris} easily follows by means of $(jjj)$ of Corollary~\ref{cor:equiboundedd}.
	
	\endproof
	
	We are now in a position to prove the analogous of Propositions~\ref{thmlimitu}, \ref{prop:stabilityandineq} and \ref{propatomic} for the sequence of piecewise affine interpolants $\widehat{u}_{\tau,\varepsilon}$. 
	
	\begin{prop}\label{prop:compactnessbis}
		Let $u_0^\varepsilon$ and $\eps u_1^\varepsilon$ be uniformly bounded and assume \eqref{largeratio}. Then for every sequence $(\tau_j,\varepsilon_j)\to(0,0)$ there exists a subsequence (not relabelled) and a function $u\in BV_{\mc R}([0,T];X)$ such that
		\begin{itemize}
			\item[\rm (i)] $\widehat{u}_{\tau_j,\varepsilon_j}(t)\to u(t),$ for every $t\in[0,T]$;
			\item[\rm (ii)] $\displaystyle V_{\mc R}(u;s,t)\leq \mathop{\lim\inf}_{j\to+\infty}\int_{s}^{t}\mc R(\dot{\widehat{u}}_{\tau_j,\varepsilon_j}(r))\,\mathrm{d}r$, for every $0\le s\le t\le T$;
			\item[\rm (iii)] $\varepsilon_j \|\dot{\widehat{u}}_{\tau_j,\varepsilon_j}(t)\|_{\mathbb{M}}\to0$, for a.e. $t\in[0,T]$.
		\end{itemize}
		If in addition $\displaystyle\frac{\tau_j}{\eps_j}\to0$, then it also holds
		\begin{itemize}
			\item[\rm (ii')] $\displaystyle V_{\mc R}(u;s,t)\leq \mathop{\lim\inf}_{j\to+\infty}\int_{\mathfrak{s}_{\tau_j}}^{\mathfrak{t}_{\tau_j}}\mc R(\dot{\widehat{u}}_{\tau_j,\varepsilon_j}(r))\,\mathrm{d}r$, for every $0\le s\le t\le T$;
			\item[\rm (iii')] $\varepsilon_j \|\dot{\widehat{u}}_{\tau_j,\varepsilon_j}(t)\|_{\mathbb{M}}\to0$, for every $t\in(0,T]\backslash(J_u\cup N)$, where $N=\bigcup\limits_{j\in\N}\Pi_{\tau_j}$.
		\end{itemize}
		
	\end{prop}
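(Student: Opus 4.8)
The plan is to obtain (i)--(iii) from a Helly-type compactness argument for functions of bounded $\mc R$-variation, and then to upgrade to the refined statements (ii$'$)--(iii$'$) by exploiting that the extra rate condition $\tau_j/\eps_j\to 0$ collapses all the interpolants to the same pointwise limit. \textbf{Step 1 (compactness and (i)--(ii)).} Since $\widehat u_{\tau,\eps}$ is piecewise affine, one-homogeneity and subadditivity of $\mc R$ give $V_{\mc R}(\widehat u_{\tau,\eps};s,t)=\int_s^t\mc R(\dot{\widehat u}_{\tau,\eps}(r))\,\d r$ for every $0\le s\le t\le T$ (on each time step $\mc R$ of the increment equals the integral of $\mc R(\dot{\widehat u}_{\tau,\eps})$, refinements of the partition do not change the sum, coarsenings can only decrease it, so including the nodes attains the supremum). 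By Corollary~\ref{prop:inequality1}(a),(d) this bounds $\max_{[0,T]}\norm{\widehat u_{\tau_j,\eps_j}}$ and $V_{\mc R}(\widehat u_{\tau_j,\eps_j};0,T)\le\overline C$ uniformly in $j$, so $\{\widehat u_{\tau_j,\eps_j}\}$ is bounded in $BV_{\mc R}([0,T];X)=BV([0,T];X)$; Helly's selection theorem (as in the proof of Proposition~\ref{thmlimitu}, cf. \cite{GidRiv}) then provides a not relabelled subsequence and $u\in BV_{\mc R}([0,T];X)$ with $\widehat u_{\tau_j,\eps_j}(t)\to u(t)$ for every $t$, which is (i). For (ii) it suffices that $V_{\mc R}(\cdot;s,t)$ is lower semicontinuous with respect to pointwise convergence (it is a supremum of finite sums of the continuous maps $f\mapsto\mc R(f(t_k)-f(t_{k-1}))$), whence $V_{\mc R}(u;s,t)\le\liminf_j V_{\mc R}(\widehat u_{\tau_j,\eps_j};s,t)=\liminf_j\int_s^t\mc R(\dot{\widehat u}_{\tau_j,\eps_j})$.

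\textbf{Step 2 ((iii)).} Here the point is that the kinetic energy is small \emph{on average}: with $v^k=v^k_{\tau_j,\eps_j}$, the bound $\eps_j\norm{v^k}_\M\le\overline C$ of Corollary~\ref{cor:equiboundedd}(jj) together with the equivalence of $\norm\cdot_\M$ and $\norm\cdot$ and \eqref{Rbounds} give \[ \int_0^T\eps_j^2\norm{\dot{\widehat u}_{\tau_j,\eps_j}(r)}_\M^2\,\d r=\eps_j^2\sum_k\tau_j\norm{v^k}_\M^2\le\overline C\,\eps_j\sum_k\tau_j\norm{v^k}_\M\le C\eps_j\sum_k\tau_j\mc R(v^k)\le C'\eps_j, \] which tends to $0$ by Corollary~\ref{cor:equiboundedd}(jjjj). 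Hence $\eps_j\dot{\widehat u}_{\tau_j,\eps_j}\to 0$ strongly in $L^2(0,T;(X,\norm\cdot_\M))$, and along a further (not relabelled) subsequence $\eps_j\norm{\dot{\widehat u}_{\tau_j,\eps_j}(t)}_\M\to 0$ for a.e. $t$.

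\textbf{Step 3 ((ii$'$)--(iii$'$)).} Assume now $\tau_j/\eps_j\to 0$. By Proposition~\ref{prop:convergence} the interpolants $\overline u_{\tau_j,\eps_j},\underline u_{\tau_j,\eps_j},\widehat u_{\tau_j,\eps_j},\widetilde u_{\tau_j,\eps_j}$ differ by $O(\tau_j/\eps_j)\to 0$ in sup norm, so all of them converge pointwise to $u$; in particular $\widehat u_{\tau_j,\eps_j}(\mathfrak r_{\tau_j})=\overline u_{\tau_j,\eps_j}(r)\to u(r)$ for every $r$. For (ii$'$): $\mathfrak s_{\tau_j},\mathfrak t_{\tau_j}$ are nodes, so $\int_{\mathfrak s_{\tau_j}}^{\mathfrak t_{\tau_j}}\mc R(\dot{\widehat u}_{\tau_j,\eps_j})=V_{\mc R}(\widehat u_{\tau_j,\eps_j};\mathfrak s_{\tau_j},\mathfrak t_{\tau_j})$; given a partition $s=r_0<\dots<r_n=t$, for $j$ large $\mathfrak s_{\tau_j}<r_1<\dots<r_{n-1}<t\le\mathfrak t_{\tau_j}$, hence $V_{\mc R}(\widehat u_{\tau_j,\eps_j};\mathfrak s_{\tau_j},\mathfrak t_{\tau_j})\ge\mc R(\widehat u_{\tau_j,\eps_j}(r_1)-\overline u_{\tau_j,\eps_j}(s))+\sum_{i=2}^n\mc R(\widehat u_{\tau_j,\eps_j}(r_i)-\widehat u_{\tau_j,\eps_j}(r_{i-1}))$; letting $j\to\infty$ (using $\overline u_{\tau_j,\eps_j}(s)\to u(s)$ and continuity of $\mc R$) and then taking the supremum over partitions gives (ii$'$). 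For (iii$'$) I would introduce the reduced energy $\mc G_j(r):=\tfrac{\eps_j^2}{2}\norm{\dot{\widehat u}_{\tau_j,\eps_j}(r)}_\M^2+\mc E_\tau(r,\overline u_{\tau_j,\eps_j}(r))-\int_0^{\mathfrak r_{\tau_j}}\partial_t\mc E(\sigma,\underline u_{\tau_j,\eps_j}(\sigma))\,\d\sigma$, which is constant on each $(t^{k-1},t^k]$ and, by \eqref{eq:eniquality1} after dropping the nonnegative dissipation terms (including $\eps_j|v^k|_\V^2-\tfrac{\Lambda-\Lambda_\V}{2}\tau_j\norm{v^k}_\mathbb{I}^2\ge 0$, see \eqref{largeratio}), nonincreasing on $[0,T]$ and uniformly bounded by \ref{hyp:E3} and Corollary~\ref{prop:inequality1}. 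Helly for monotone functions gives, along a further subsequence, $\mc G_j\to\mc G$ pointwise with $\mc G$ nonincreasing; standard limit passages (pointwise convergence of $\overline u_{\tau_j,\eps_j},\underline u_{\tau_j,\eps_j}$ to $u$, joint continuity of $\mc E$, \eqref{eq:energyinterp}, dominated convergence via \ref{hyp:E3}) then yield $\tfrac{\eps_j^2}{2}\norm{\dot{\widehat u}_{\tau_j,\eps_j}(r)}_\M^2\to\mc G(r)-\Psi(r)=:\kappa(r)\ge 0$ for every $r$, where $\Psi(r):=\mc E(r,u(r))-\int_0^r\partial_t\mc E(\sigma,u(\sigma))\,\d\sigma$. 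By (iii), $\kappa=0$ a.e., so the nonincreasing $\mc G$ and the $BV$ function $\Psi$ agree a.e. and thus have the same one-sided limits everywhere; since $\Psi$ is continuous at each $r\notin J_u$ (being $u$ and $\mc E(\cdot,u(\cdot))$ so), monotonicity forces $\mc G(r)=\Psi(r)$, i.e. $\kappa(r)=0$, there. In particular $\eps_j\norm{\dot{\widehat u}_{\tau_j,\eps_j}(r)}_\M\to 0$ for every $r\in(0,T]\setminus(J_u\cup N)$.

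\textbf{Main obstacle.} Steps 1 and 2 are essentially bookkeeping. The delicate point is (iii$'$): one must carefully check that $\mc G_j$ is genuinely nonincreasing (this is exactly where \eqref{largeratio} and the nonnegativity of the fictitious viscous contribution enter, and where the time-discretisation of $\partial_t\mc E$ has to be kept aligned with the piecewise-constant interpolants) and then identify the limiting reduced energy $\mc G$ with $\Psi$ off $J_u$, also handling the endpoint $r=0$ and the countable exceptional set $N$ on which $\dot{\widehat u}_{\tau_j,\eps_j}$ carries only one-sided values.
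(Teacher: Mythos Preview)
Your proposal is correct and follows the same overall strategy as the paper's proof (Helly compactness for (i)--(ii), $L^p$ control of the kinetic energy for (iii), and an energy-monotonicity argument for (iii$'$)). The only notable difference is in (ii$'$): the paper argues more directly by writing
\[
\int_s^t\mc R(\dot{\widehat u}_{\tau_j,\eps_j})=\int_{\mathfrak s_{\tau_j}}^{\mathfrak t_{\tau_j}}\mc R(\dot{\widehat u}_{\tau_j,\eps_j})+\int_s^{\mathfrak s_{\tau_j}}\mc R(\dot{\widehat u}_{\tau_j,\eps_j})-\int_t^{\mathfrak t_{\tau_j}}\mc R(\dot{\widehat u}_{\tau_j,\eps_j}),
\]
and bounding each of the two error terms by $C(\mathfrak r_{\tau_j}-r)/\eps_j\le C\tau_j/\eps_j\to 0$ via Corollary~\ref{prop:inequality1}(b), which reduces (ii$'$) to the already-proved (ii). This is shorter than your partition argument, though yours is also valid. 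For (iii$'$) the paper simply invokes \cite[Theorem~6.1]{GidRiv}; your reduced-energy/Helly argument is precisely a self-contained version of that reference, so here you are actually more explicit than the paper.
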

	\proof
	We can argue as in \cite[Theorem~6.1]{GidRiv}. In view of the a priori bounds of Corollary~\ref{prop:inequality1} and \eqref{Rbounds}, the sequence $\{\widehat{u}_{\tau_j,\varepsilon_j}\}_{j\in\N}$ is uniformly equibounded with uniformly equibounded variation.
	Then, by virtue of the Helly's Selection Theorem there exists a subsequence and a function $u\in BV([0,T];X)$ complying with (i). Furthermore, with \cite[Proposition~4.11 and Lemma~4.12]{GidRiv}, we get $u\in BV_{\mc R}([0,T];X)$ and assertion (ii).\\
	By virtue of Corollary~\ref{prop:inequality1} and \eqref{Rbounds}, we also have
	\begin{equation*}
		\lim\limits_{j\to +\infty}\varepsilon_j \int_0^T \|\dot{\widehat{u}}_{\tau_j,\varepsilon_j}(r)\|\,\mathrm{d}r=0\,,
	\end{equation*}
	whence (iii) follows, up to possibly passing to a further subsequence.\\
	To obtain (ii') it is enough to observe that
	\begin{equation*}
		\int_{s}^{t}\mc R(\dot{\widehat{u}}_{\tau_j,\varepsilon_j}(r))\,\mathrm{d}r=\int_{\mathfrak{s}_{\tau_j}}^{\mathfrak{t}_{\tau_j}}\mc R(\dot{\widehat{u}}_{\tau_j,\varepsilon_j}(r))\,\mathrm{d}r+\int_{s}^{\mathfrak{s}_{\tau_j}}\mc R(\dot{\widehat{u}}_{\tau_j,\varepsilon_j}(r))\,\mathrm{d}r-\int_{t}^{\mathfrak{t}_{\tau_j}}\mc R(\dot{\widehat{u}}_{\tau_j,\varepsilon_j}(r))\,\mathrm{d}r,
	\end{equation*}
	and to notice that the last two terms vanish as $j\to +\infty$ since
	\begin{align*}
		\int_{s}^{\mathfrak{s}_{\tau_j}}\mc R(\dot{\widehat{u}}_{\tau_j,\varepsilon_j}(r))\,\mathrm{d}r\le C \int_{s}^{\mathfrak{s}_{\tau_j}}\|\dot{\widehat{u}}_{\tau_j,\varepsilon_j}(r)\|_\mathbb{M}\d r\le C\frac{\mathfrak{s}_{\tau_j}-s}{\eps_j}\le C\frac{\tau_j}{\eps_j},
	\end{align*}
	and the same holds for the other one.\par 
	The proof of (iii') follows exactly as in \cite[Theorem~6.1]{GidRiv}, by using \eqref{eq:augmebinterp} and recalling that thanks to \eqref{eq:unifconv} we also have $\overline{u}_{\tau_j,\varepsilon_j}(t)\to u(t)$ for every $t\in[0,T]$.
	
	\endproof
	
		\begin{prop}
		Let $u_0^\varepsilon$ and $\varepsilon u_1^\varepsilon$ be uniformly bounded, and let $u$ be the limit function obtained in Proposition~\ref{prop:compactnessbis} from a subsequence satisfying 
		\begin{equation*}
			\lim\limits_{j\to +\infty}\eps_j=\lim\limits_{j\to +\infty}\frac{\tau_j}{\eps_j}=0.
		\end{equation*}
		Then the inequality
		\begin{equation}
			\int_s^t \mathcal{R}(v)+\langle D_x\mathcal{E}(r, u(r)),v\rangle\,\mathrm{d}r\geq0,
			\label{eq:ineqincl}
		\end{equation}
		holds for every $v\in X$ and for every $0\leq s\leq t\leq T$. In particular, the left and right limits of $u$ are locally stable; i.e., they fulfill the inclusions
		\begin{align*}
			&-D_x\mc E(t,u^-(t))\in K^*\,,\quad\mbox{ for every }t\in (0,T]\,;\\
			&-D_x\mc E(t,u^+(t))\in K^*,\quad\mbox{ for every }t\in [0,T]\,.
		\end{align*}
		Moreover, if in addition $\varepsilon u_1^\varepsilon\to0$, there exists a positive Radon measure $\mu$ such that for every $0\le s\le t\le T$ there holds
		\begin{equation*}
			\mc E(t,u^+(t))+V_\RR(u_{\rm co};s,t)+\sum_{r\in J^{\rm e}_u\cap[s,t]}\mu (\{r\})=	\mc E(s,u^-(s))+\int_{s}^{t}\partial_t\mc E(r,u(r))\d r.
		\end{equation*}
		In particular, $\mc E(t,u^-(t))-\mc E(t,u^+(t))=\mu (\{t\})\ge 0$ for every $t\in J^{\rm e}_u$.
	\end{prop}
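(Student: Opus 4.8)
The plan is to transcribe the continuous arguments of Propositions~\ref{prop:stabilityandineq} and \ref{propatomic} to the present discrete setting, using the Euler--Lagrange inclusion \eqref{EL} and the discrete energy inequality \eqref{eq:eniquality1} in place of the differential inclusion \eqref{mainprob} and the energy identity \eqref{eq:energyident1}, and performing every limit along the subsequence of Proposition~\ref{prop:compactnessbis} (with $\eps_j\to0$ and $\tau_j/\eps_j\to0$) by means of Corollary~\ref{prop:inequality1} and the interpolation estimates of Proposition~\ref{prop:convergence}. Note first that, since $\tau_j/\eps_j\to0$, the bound \eqref{eq:unifconv} together with Proposition~\ref{prop:compactnessbis}(i) gives $\overline u_{\tau_j,\eps_j}(r)\to u(r)$ and $\widehat u_{\tau_j,\eps_j}(r)\to u(r)$ for \emph{every} $r\in[0,T]$; this will be used repeatedly.

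\emph{Step 1: the inequality \eqref{eq:ineqincl} and local stability.} Fix $v\in X$. For each $k\in\mc K_\tau$, \eqref{EL} provides some $\xi^k\in\partial\mc R(v^k)\subseteq K^*$ realizing the inclusion, and since $\langle\xi^k,v\rangle\le\mc R(v)$ by \eqref{2.2mielke}, rewriting in terms of the interpolants yields, for a.e.\ $r\in[0,T]$,
\[ \mc R(v)+\langle D_x\mc E_\tau(r,\overline u_{\tau,\eps}(r)),v\rangle\ \ge\ -\Big\langle\eps^2\M\ddot{\widetilde u}_{\tau,\eps}(r)+\eps\V\dot{\widehat u}_{\tau,\eps}(r)+\tfrac{\Lambda_\V}{2}\tau\mathbb I\dot{\widehat u}_{\tau,\eps}(r),\,v\Big\rangle. \]
Integrating over $[\mathfrak s_\tau,\mathfrak t_\tau]$, the right-hand side becomes a total-derivative term involving $\eps^2\M\dot{\widetilde u}_{\tau,\eps}$, $\eps\V\widehat u_{\tau,\eps}$ and $\frac{\Lambda_\V}{2}\tau\mathbb I\widehat u_{\tau,\eps}$ evaluated at $\mathfrak s_\tau$ and $\mathfrak t_\tau$, which is $O(\eps)+O(\tau)$ and hence vanishes, by (a)--(c) of Corollary~\ref{prop:inequality1} (note $\eps\|\dot{\widetilde u}_{\tau,\eps}(t^k)\|_\M=\eps\|v^k\|_\M\le\overline C$ and $\eps\|u_1^\eps\|_\M\le\overline C$); the left-hand side tends to $\int_s^t(\mc R(v)+\langle D_x\mc E(r,u(r)),v\rangle)\d r$ by \eqref{eq:energyinterp}, the bound (a), and dominated convergence, since $D_x\mc E$ is bounded on $[0,T]\times\overline{B_{\overline C}}$. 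This proves \eqref{eq:ineqincl}; dividing by $t-s$ and letting $s\nearrow t$ (resp.\ $t\searrow s$), using the existence of one-sided limits of $u$ and the continuity of $D_x\mc E$, gives $\mc R(v)+\langle D_x\mc E(t,u^-(t)),v\rangle\ge0$ (resp.\ the same with $u^+$) for every $v$, i.e.\ the stated inclusions.

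\emph{Step 2: energy inequality, the measure $\mu$, and its identification.} From \eqref{eq:eniquality1}, discarding the nonnegative term $\eps|v^k|_\V^2-\frac{\Lambda-\Lambda_\V}{2}\tau\|v^k\|_\mathbb I^2$ (nonnegative under \eqref{largeratio}, as checked in the proof of Proposition~\ref{prop:inequality0}) and the kinetic energy, and passing to the limit as in \cite[Theorem~6.1]{GidRiv} — controlling the $\mc R$-variation from below by (ii') of Proposition~\ref{prop:compactnessbis}, the energy interpolants by continuity of $\mc E$ and \eqref{eq:energyinterp}, and $\int\partial_t\mc E(r,\underline u_{\tau,\eps}(r))\d r$ by dominated convergence using \ref{hyp:E3} and the uniform energy bound of Corollary~\ref{prop:inequality1} — one obtains the energy inequality $\mc E(t,u^+(t))+V_\RR(u;s-,t+)\le\mc E(s,u^-(s))+\int_s^t\partial_t\mc E(r,u(r))\d r$, first at times $s',t'\notin J_u\cup N$ where the kinetic energy vanishes by (iii') and then at general $0\le s\le t\le T$ by monotonicity of the $\mc R$-variation and continuity of $\mc E$ (replacing $u(t'),u(s')$ by $u^+(t),u^-(s)$), the case $s=0$ following from \eqref{eq:eniquality1} with $m=0$ since $\eps_ju_1^{\eps_j}\to0$. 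Discarding instead \emph{all} dissipative terms and again using (iii') shows that $t\mapsto\mc E(t,u^+(t))-\int_0^t\partial_t\mc E(r,u(r))\d r$ is nonincreasing, producing a positive Radon measure $\mu$ with $\mc E(t,u^+(t))+\mu([s,t])=\mc E(s,u^-(s))+\int_s^t\partial_t\mc E(r,u(r))\d r$. From here the argument is word for word that of Proposition~\ref{propatomic}: the $BV$ chain rule \cite[Theorem~4.1]{CrDeCicco} (valid by Remark~\ref{rmkE}) together with the local stability of $u^+$ from Step~1 gives $\mu([s,t]\setminus J^{\rm e}_u)\le V_\RR(u_{\rm co};s,t)$; the identity for $\mu$ combined with the energy inequality and \eqref{eq: representation}--\eqref{essvar} gives the reverse inequality $\mu(B)\ge\RR(u')(B)$ for Borel $B$; hence $\mu([s,t])=V_\RR(u_{\rm co};s,t)+\sum_{r\in J^{\rm e}_u\cap[s,t]}\mu(\{r\})$, and in particular $\mc E(t,u^-(t))-\mc E(t,u^+(t))=\mu(\{t\})\ge0$ on $J^{\rm e}_u$.

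The main obstacle will be the passage to the limit in the energy inequality: the discrete kinetic energy $\frac{\eps^2}{2}\|\dot{\widehat u}_{\tau,\eps}(\cdot)\|_\M^2$ is controlled only at times outside $J_u\cup N$ (through (iii') of Proposition~\ref{prop:compactnessbis}, whose proof needs $\tau_j/\eps_j\to0$), so the inequality must be obtained first on shrinking ``good'' subintervals and only then extended to all $s\le t$ via the monotonicity of the $\mc R$-variation and the regularity of $\mc E$. This step, together with the careful bookkeeping of the mismatches between the interpolants $\overline u_{\tau,\eps},\underline u_{\tau,\eps},\widehat u_{\tau,\eps},\widetilde u_{\tau,\eps}$ supplied by Proposition~\ref{prop:convergence}, is where the real work lies; everything else transcribes the continuous arguments.
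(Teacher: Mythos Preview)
Your proposal is correct and follows essentially the same route as the paper. The paper's proof of \eqref{eq:ineqincl} is exactly your Step~1 (Euler--Lagrange inclusion, multiply by $\tau_j$, sum, rewrite via the interpolants, pass to the limit using the bounds of Corollary~\ref{prop:inequality1} and dominated convergence), and for the rest the paper simply says ``the remaining assertions being as in \cite[Propositions~6.2--6.3]{GidRiv} and in Proposition~\ref{propatomic}, exploiting (ii') and (iii') of Proposition~\ref{prop:compactnessbis}'', which is precisely the program you spell out in Step~2.
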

	
	\proof
	We only prove \eqref{eq:ineqincl}, the remaining assertions being as in \cite[Propositions~6.2-6.3]{GidRiv} and in Proposition~\ref{propatomic}, exploiting (ii') and (iii') of Proposition~\ref{prop:compactnessbis}. From \eqref{EL} and \eqref{2.2mielke}, for every $v\in X$ and $k\in\mathcal{K}_{\tau_j}$ we have
	\begin{align*}
		0\le \mc R(v)+\left \langle \varepsilon_j^2\mathbb{M}\frac{v^{k}-v^{k-1}}{\tau_j}+\varepsilon_j\mathbb{V}v^k+D_x \mc E(t^{k},u^{k})+\frac{\Lambda_\V}{2}\tau_j\mathbb{I}v^k, v\right\rangle.
	\end{align*}
	
	By multiplying both sides by $\tau_j$ and summing on $k$, for every $m,n\in\mathcal{K}^0_{\tau_j}$ with $m\le n$ we now obtain
	\begin{align*}
		0\le \eps_j^2\langle\mathbb{M}(v^n-v^m),v\rangle+\sum_{k=m+1}^n \tau_j\left(\mc R(v)+\langle D_x \mc E(t^{k},u^{k}),v\rangle+\varepsilon_j\langle\mathbb{V}v^k,v\rangle+\frac{\Lambda_\V}{2}\tau_j\langle\mathbb{I}v^k,v\rangle\right),
	\end{align*}
	namely for every $0\le s\le t\le T$ it holds
	\begin{align*}
		0&\le \eps_j^2\left\langle\mathbb{M}(\dot{\widetilde{u}}_{\tau_j,\eps_j}(\mathfrak{t}_{\tau_j})-\dot{\widetilde{u}}_{\tau_j,\eps_j}(\mathfrak{s}_{\tau_j})),v\right\rangle+\int_{\mathfrak{s}_{\tau_j}}^{\mathfrak{t}_{\tau_j}}\mc R(v) + \left \langle D_x \mc E_{\tau_j}(r,\overline{u}_{\tau_j,\varepsilon_j}(r)), v\right\rangle\,\mathrm{d}r\\
		&\quad+\varepsilon_j\int_{\mathfrak{s}_{\tau_j}}^{\mathfrak{t}_{\tau_j}} \left \langle \mathbb{V}\dot{\widehat{u}}_{\tau_j,\varepsilon_j}(r), v\right\rangle\,\mathrm{d}r+\frac{\Lambda_\V}{2}\tau_j \int_{\mathfrak{s}_{\tau_j}}^{\mathfrak{t}_{\tau_j}} \left \langle \mathbb{I}\dot{\widehat{u}}_{\tau_j,\varepsilon_j}(r), v\right\rangle\,\mathrm{d}r.
	\end{align*}	
	Passing to the limit as $j\to+\infty$, by $(ii)$ in Corollary~\ref{cor:equiboundedd} we have that 
	\begin{equation*}
		\lim_{j\to+\infty}\varepsilon_j^2 \left|\left \langle \mathbb{M}\dot{\widetilde{u}}_{\tau_j,\varepsilon_j}(\mathfrak{t}_{\tau_j}), v\right\rangle\right| = \lim_{j\to+\infty} \varepsilon_j^2\left|\left \langle \mathbb{M}\dot{\widetilde{u}}_{\tau_j,\varepsilon_j}(\mathfrak{s}_{\tau_j}), v\right\rangle\right| =0.
	\end{equation*}
	From Corollary~\ref{prop:inequality1}, \eqref{Rbounds}
	and the Cauchy-Schwarz inequality we also have
	\begin{equation*}
		\left|\varepsilon_j\int_{\mathfrak{s}_{\tau_j}}^{\mathfrak{t}_{\tau_j}} \left \langle \mathbb{V}\dot{\widehat{u}}_{\tau_j,\varepsilon_j}(r), v\right\rangle\,\mathrm{d}r \right| \leq C \|v\|\varepsilon_j\int_0^T \|\dot{\widehat{u}}_{\tau_j,\varepsilon_j}(r)\|\,\mathrm{d}r\to0\,,
	\end{equation*}
	and a similar argument shows that the last term in the inequality above vanishes as well as $j\to +\infty$.\par 
	We conclude observing that \eqref{eq:energyinterp} and \eqref{eq:unifconv} allow us to use Dominated Convergence Theorem getting
	\begin{equation*}
		\lim_{j\to+\infty} \int_{\mathfrak{s}_{\tau_j}}^{\mathfrak{t}_{\tau_j}}\mc R(v) + \left \langle D_x \mc E_{\tau_j}(r,\overline{u}_{\tau_j,\varepsilon_j}(r)), v\right\rangle\,\mathrm{d}r = \int_{s}^{t}\mc R(v) + \left \langle D_x \mc E(r,{u}(r)), v\right\rangle\,\mathrm{d}r.
	\end{equation*}
	
	\endproof

	\subsection{The convergence result}
	As already done in the time-continuous setting in \eqref{augmeb}, we now rephrase the energy inequality \eqref{eq:eniquality1} in terms of the De Giorgi's principle. For simplicity, we set:
	\begin{equation}
		w^k:= -\varepsilon^2\mathbb{M}\frac{v^{k}-v^{k-1}}{\tau}-D_x \mc E(t^{k},u^{k})-\frac{\Lambda_\V}{2}\tau\mathbb{I}v^k\,,\quad k\in\mc K_\tau\,,
	\end{equation}
	and, recalling the definitions of interpolants $\overline{u}_{\tau,\varepsilon}$ \eqref{constantinterpolant} and $\widetilde{u}_{\tau,\varepsilon}$ \eqref{quadraticinterpolant}, for $t\in[0,T]\backslash\Pi_\tau$ we define
	\begin{equation*}
		\begin{split}
			\overline{w}_{\tau,\varepsilon}(t)&:= -\varepsilon^2\mathbb{M}\ddot{\widetilde{u}}_{\tau,\varepsilon}(t)-D_x\mc E_\tau(t,\overline{u}_{\tau,\varepsilon}(t))-\frac{\Lambda_\V}{2}\tau\mathbb{I}\dot{\widehat{u}}_{\tau,\eps}(t)\,,\\
			\widetilde{w}_{\tau,\varepsilon}(t)&:= -\varepsilon^2\mathbb{M}\ddot{\widetilde{u}}_{\tau,\varepsilon}(t)-D_x\mc E(t,\widetilde{u}_{\tau,\varepsilon}(t))\,.
		\end{split}
	\end{equation*}
	Then, by virtue of Corollary~\ref{prop:inequality1} and Proposition~\ref{prop:convergence}, if $\displaystyle\frac\tau\eps$ is bounded we deduce
	\begin{equation}
		\max\limits_{t\in [0,T]\backslash\Pi_\tau}\left(\|\overline{w}_{\tau,\varepsilon}(t)\|_*+\|\widetilde{w}_{\tau,\varepsilon}(t)\|_*\right)\leq C\,.
		\label{eq:wbounds}
	\end{equation}
	Furthermore, thanks to the continuity of $D_x\mc E$, if $\displaystyle\frac\tau\eps\to 0$ we also have
	\begin{equation}\label{eq:wvanishing}
		\lim\limits_{(\eps,\tau)\to(0,0)}\max\limits_{t\in [0,T]\backslash\Pi_\tau}\|\overline{w}_{\tau,\varepsilon}(t)-\widetilde{w}_{\tau,\varepsilon}(t)\|_*= 0.
	\end{equation}	
	From \eqref{EL} and recalling that using \eqref{Fenchel} it holds
	\begin{equation*}
		\mc R_\varepsilon(v^k) + \mc R_\varepsilon^*(w^k) =  \langle w^k, v^k\rangle\,,
	\end{equation*}
	inequality \eqref{eq:eniquality1} can be rewritten, arguing in a similar way, as
	
	\begin{equation*}
		\begin{split}
			&\quad\frac{\varepsilon^2}{2}{\left\|v^n\right\|^2_{\mathbb{M}}} + \mc E(t^{n},u^{n}) + \sum_{k=m+1}^n\tau\left(\mc R_\varepsilon(v^k) + \mc R_\varepsilon^*(w^k)\right) \\
			& \leq  \frac{\varepsilon^2}{2}{\|v^m\|^2_{\mathbb{M}}} + \mc E (t^m,u^m) + \sum_{k=m+1}^n \int_{t^{k-1}}^{t^k}\partial_t\mc E(r,u^{k-1})\,\mathrm{d}r +\frac{\Lambda-\Lambda_\V}{2}\tau^2\sum_{k=m+1}^{n}\tau\|v^k\|^2_\mathbb{I},
		\end{split}
	\end{equation*}
	and thus, in terms of interpolants, as
	
	\begin{equation}
		\begin{split}
			&\displaystyle\frac{\varepsilon^2}{2}\|\dot{\widehat{u}}_{\tau,\varepsilon}(t)\|^2_{\mathbb{M}}+\mc E_\tau(t,\overline{u}_{\tau,\varepsilon}(t))+\int_{\mathfrak{s}_\tau}^{\mathfrak{t}_\tau}\mc R_\varepsilon(\dot{\widehat{u}}_{\tau,\varepsilon}(r)) + \mc R^*_\varepsilon(\overline{w}_{\tau,\varepsilon}(r))\,\mathrm{d}r \\
			& \leq \displaystyle\frac{\varepsilon^2}{2}\|\dot{\widehat{u}}_{\tau,\varepsilon}(s)\|^2_{\mathbb{M}}+\mc E_\tau(s,\overline{u}_{\tau,\varepsilon}(s))  + \int_{\mathfrak{s}_\tau}^{\mathfrak{t}_\tau}\partial_t\mc E(r,\underline{u}_{\tau,\varepsilon}(r))\,\mathrm{d}r +\frac{\Lambda-\Lambda_\V}{2}\tau^2\int_{\mathfrak{s}_\tau}^{\mathfrak{t}_\tau}\|\dot{\widehat{u}}_{\tau,\varepsilon}(r)\|_\mathbb{I}^2\d r,
		\end{split}
		\label{eq:augmebinterp}
	\end{equation}
	for every $s,t\in(-\tau,T]\backslash\Pi_\tau$ with $s\le t$.\par 
	To conclude the proof of part (II) of Theorem~\ref{mainthm}, we only have to confirm the validity of an analogous of Proposition~\ref{proposizione1} for the function $u$ obtained with Proposition~\ref{prop:compactnessbis}. For this, we will need to reinforce the assumption $\frac\tau\eps\to0$ by requiring, in addition, that $\frac{\tau}{\varepsilon^2}$ is uniformly bounded (see \eqref{eq:vanratio} below), in order to exploit \eqref{l2convtris}. 
	
	\begin{prop}
		Let $u_0^\varepsilon\to u_0$, $\eps u_1^\eps\to 0$, and let $u$ be the limit function obtained in Proposition~\ref{prop:compactnessbis} from a subsequence satisfying 
		\begin{equation}\label{eq:vanratio}
			\lim\limits_{j\to +\infty}\eps_j=0,\qquad \text{and}\qquad\sup\limits_{j\in\N}\frac{\tau_j}{\varepsilon_j^2}<+\infty.
		\end{equation} 
		Then for every $t\in [0,T]$ it holds
		\begin{equation}
			\mc E(t,u^-(t))-\mc E(t,u^+(t))\geq \sup_{p\in RCP_\V}c^{\mathbb{M},p}_t(u^-(t),u^+(t))\,.
			\label{lowbounddissbis}
		\end{equation}
		\label{proposizione1bis}
	\end{prop}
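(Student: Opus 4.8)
The plan is to follow the proof of Proposition~\ref{proposizione1} step by step, with the dynamic solution $\xeps$ replaced by the interpolants of the scheme \eqref{schemeincond} and the augmented energy identity \eqref{augmeb} replaced by the discrete augmented energy inequality \eqref{eq:augmebinterp}. As there, \eqref{lowbounddissbis} is a trivial equality off $J^{\rm e}_u$ by $(1)$ of Proposition~\ref{propertiescost}, so I would fix $t\in J^{\rm e}_u$ and an arbitrary $p\in RCP_\V$ (if $t=0$ one extends the interpolants to the left of $0$ by the affine function of slope $u_1^{\epsj}$, exactly as in Proposition~\ref{proposizione1}). First I would extract, by a diagonal argument along the lines of \cite[Proposition~5.8]{ScilSol} and using the pointwise convergences $\widehat{u}_{\tau_j,\epsj}\to u$, $\overline{u}_{\tau_j,\epsj}\to u$ (see \eqref{eq:unifconv}), the fact that $u$ is regulated, and (iii') of Proposition~\ref{prop:compactnessbis}, sequences $t_j^-\nearrow t$ and $t_j^+\searrow t$ lying in $(0,T]\setminus(J_u\cup N)$, together with a further (not relabelled) subsequence, such that $\overline{u}_{\tau_j,\epsj}(t_j^\mp)\to u^\mp(t)$ and $\epsj\|\dot{\widehat{u}}_{\tau_j,\epsj}(t_j^\pm)\|_\M\to 0$; by \eqref{eq:unifconv} and \eqref{l2convtris} the analogous limits then hold with $\widetilde{u}_{\tau_j,\epsj}$ and $\dot{\widetilde{u}}_{\tau_j,\epsj}$ in place of $\overline{u}_{\tau_j,\epsj}$ and $\dot{\widehat{u}}_{\tau_j,\epsj}$.

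Next I would insert $s=t_j^-$, $t=t_j^+$ into \eqref{eq:augmebinterp} and pass to the limit. The kinetic terms vanish by the extraction; the difference $\mc E_{\tau_j}(t_j^-,\overline{u}_{\tau_j,\epsj}(t_j^-))-\mc E_{\tau_j}(t_j^+,\overline{u}_{\tau_j,\epsj}(t_j^+))\to\mc E(t,u^-(t))-\mc E(t,u^+(t))$ by continuity of $\mc E$ and \eqref{eq:energyinterp}; the integral of $\partial_t\mc E(\cdot,\underline{u}_{\tau_j,\epsj}(\cdot))$ tends to $0$ because its domain shrinks to $\{t\}$ while the integrand is dominated by an $L^1$ function (by \ref{hyp:E3} and the uniform energy bound of Corollary~\ref{prop:inequality1}); and the spurious term $\tfrac{\Lambda-\Lambda_\V}{2}\tau_j^2\int\|\dot{\widehat{u}}_{\tau_j,\epsj}\|_\mathbb{I}^2$ tends to $0$ by (b), (d) of Corollary~\ref{prop:inequality1} together with $\tau_j/\epsj\to 0$. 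Using $p_\V(v,w)\le\mc R_\epsj(v)+\mc R^*_\epsj(w)$ and $(iii)$ of Definition~\ref{RCP}, this gives $\mc E(t,u^-(t))-\mc E(t,u^+(t))\ge\limsup_j\int_{t_j^-}^{t_j^+}p(\dot{\widehat{u}}_{\tau_j,\epsj}(r),\overline{w}_{\tau_j,\epsj}(r))\,\mathrm{d}r$, where for brevity I write $\int_{t_j^-}^{t_j^+}$ for the integral over the window $[\mathfrak{s}_{\tau_j},\mathfrak{t}_{\tau_j}]$ of \eqref{eq:augmebinterp}, which shrinks to $\{t\}$. Then, as in \eqref{tofreeze}--\eqref{freezed}, I would use the weighted Lipschitz property $(iv)$ of $p$, the bound $\int_0^T\|\dot{\widehat{u}}_{\tau_j,\epsj}\|\le C$, the uniform convergence \eqref{eq:wvanishing}, and the uniform continuity of $D_x\mc E$ on $[0,T]\times\overline{B_{\overline{C}}}$ to replace $\overline{w}_{\tau_j,\epsj}(r)$ inside $p$ by $\widetilde{w}^t_{\tau_j,\epsj}(r):=-\epsj^2\M\ddot{\widetilde{u}}_{\tau_j,\epsj}(r)-D_x\mc E(t,\widetilde{u}_{\tau_j,\epsj}(r))$ (that is, changing $\overline{w}$ into $\widetilde{w}$ and freezing the time $t$ inside $D_x\mc E$).

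The heart of the argument is the rescaling: setting $v_j(\tau):=\widetilde{u}_{\tau_j,\epsj}(\epsj\tau+t_j^-)$ on $[0,\sigma_j]$ with $\sigma_j:=(t_j^+-t_j^-)/\epsj$, one has $v_j\in W^{2,\infty}$ and $\|\M\ddot v_j\|_*=\epsj^2\|\M\ddot{\widetilde{u}}_{\tau_j,\epsj}\|_*\le\overline{C}$ by Corollary~\ref{prop:inequality1}(c), and the change of variables together with the one-homogeneity of $p(\cdot,w)$ turns $\int_{t_j^-}^{t_j^+}p(\dot{\widetilde{u}}_{\tau_j,\epsj},\widetilde{w}^t_{\tau_j,\epsj})\,\mathrm{d}r$ into $\int_0^{\sigma_j}p(\dot v_j,-\M\ddot v_j-D_x\mc E(t,v_j))\,\mathrm{d}\tau$. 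The genuinely new point, and the reason the reinforced condition \eqref{eq:vanratio} is invoked, is that the quantity produced by the energy inequality contains $\dot{\widehat{u}}_{\tau_j,\epsj}$, whereas only the $W^{2,\infty}$ interpolant $\widetilde{u}_{\tau_j,\epsj}$ is admissible in the cost \eqref{eq:cost}; I would bridge the two by observing that, after rescaling, $\epsj\dot{\widehat{u}}_{\tau_j,\epsj}$ and $\epsj\dot{\widetilde{u}}_{\tau_j,\epsj}$ are uniformly close (by \eqref{l2convtris}, $\|\epsj\dot{\widehat{u}}_{\tau_j,\epsj}-\epsj\dot{\widetilde{u}}_{\tau_j,\epsj}\|\le C\tau_j/\epsj\to0$) and, together with $\widetilde{w}^t_{\tau_j,\epsj}$, confined to a fixed compact set (Corollary~\ref{prop:inequality1}(b)), so that the continuity of $p$ applies; boundedness of $\tau_j/\epsj^2$ is what makes the residual non-rescaled error terms — which involve factors of the order of $(\text{window length})\times\tau_j/\epsj^2$ — infinitesimal. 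Finally, as in Proposition~\ref{proposizione1}, I would glue to $v_j$ the two cubic connecting caps $\alpha_j$, $\beta_j$ (whose cost contribution vanishes by \eqref{cont0} and whose first and second derivatives decay, so that the constraint defining $V^{\M,N_j}_{u^-(t),u^+(t)}$ is met), translate the resulting curve into an element of $V^{\M,N_j}_{u^-(t),u^+(t)}$ for $N_j$ large, and deduce $\mc E(t,u^-(t))-\mc E(t,u^+(t))\ge c^{\M,p}_t(u^-(t),u^+(t))$; letting $p$ range over $RCP_\V$ yields \eqref{lowbounddissbis}.

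The step I expect to be the main obstacle is exactly this interpolant reconciliation in the rescaled picture: the rescaled time window $\sigma_j$ may diverge, while a general $p\in RCP_\V$ is only continuous, so one must control $\sigma_j$ carefully through the choice of $t_j^\pm$ in the diagonal extraction (or, equivalently, balance the modulus of continuity of $p$ against $\sigma_j$). This is the technical reason for introducing the auxiliary interpolant $\widetilde{u}_{\tau,\varepsilon}$ and the estimate \eqref{l2convtris}, and for strengthening $\tau_j/\epsj\to0$ to $\sup_j\tau_j/\epsj^2<+\infty$; note that when $\V$ is positive-definite the contact potential is finite and Lipschitz (Remark~\ref{regpV}), which makes this step considerably more elementary.
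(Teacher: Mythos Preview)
Your overall plan mirrors the paper's proof, and the extraction, the passage through \eqref{eq:augmebinterp}, the freezing of time in $D_x\mc E$, and the final rescaling/gluing are all correct. The gap is precisely where you flag it: the replacement of $\dot{\widehat{u}}_{\tau_j,\epsj}$ by $\dot{\widetilde{u}}_{\tau_j,\epsj}$ inside $p$. Your route (pass to $p$ first, then swap the velocity using continuity of $p$ on a compact set after rescaling) yields an error of order $\sigma_j\,\omega(C\tau_j/\epsj)$, where $\omega$ is only a modulus of continuity and $\sigma_j=(t_j^+-t_j^-)/\epsj$ may diverge. The ``balancing $\sigma_j$ against $\omega$ via the diagonal extraction'' that you propose is not justified: the extraction is constrained by the convergences it must produce, and you have no quantitative handle on $\omega$ for a general $p\in RCP_\V$. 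In particular, the error you quote as ``(window length)$\times\tau_j/\epsj^2$'' does \emph{not} follow from mere continuity of $p$.

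The paper resolves this cleanly by reversing the order of operations: it performs the velocity swap \emph{at the level of $p_\V$}, before ever descending to $p$, using the explicit Lipschitz-in-$v$ estimate \eqref{eq:stimapv} (CLAIM~1 in the proof of Proposition~\ref{propYosida}), namely $p_\V(v_1,w)\le p_\V(v_2,w)+C(1+\|w\|_*)\|v_1-v_2\|$. In the non-rescaled picture this gives directly
\[
\int_{\mathfrak{t}_j^-}^{\mathfrak{t}_j^+}\!\!\big[p_\V(\dot{\widehat{u}},\overline{w})-p_\V(\dot{\widetilde{u}},\overline{w})\big]\,\mathrm{d}r\ge -C\!\int_{\mathfrak{t}_j^-}^{\mathfrak{t}_j^+}\!(1{+}\|\overline{w}\|_*)\|\dot{\widehat{u}}-\dot{\widetilde{u}}\|\,\mathrm{d}r\ge -C(\mathfrak{t}_j^+{-}\mathfrak{t}_j^-)\frac{\tau_j}{\epsj^2}\to 0,
\]
by \eqref{eq:wbounds}, \eqref{l2convtris} and the boundedness of $\tau_j/\epsj^2$; this is the genuine source of the ``(window length)$\times\tau_j/\epsj^2$'' term you mention. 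Only \emph{afterwards} does one use $p\le p_\V$ and then property (iv) of Definition~\ref{RCP} (together with \eqref{eq:wvanishing}) to replace $\overline{w}$ by $\widetilde{w}$. With the first slot already equal to $\dot{\widetilde{u}}$, the rescaling and the cubic-cap construction go through exactly as in Proposition~\ref{proposizione1}, with no need to control $\sigma_j$. Note that \eqref{eq:stimapv} holds for $p_\V$ regardless of whether $\V$ is positive-definite, so the argument is uniform and your final remark about the positive-definite case being ``considerably more elementary'' is unnecessary.
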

	
	\proof
	As already remarked in the proof of Proposition~\ref{proposizione1}, it will suffice to prove \eqref{lowbounddissbis} in the case $t\in J^{\rm e}_u$.
	By arguing as in \cite[Proposition~5.9]{SciSol18}, taking into account Proposition~\ref{prop:compactnessbis}, by a diagonal argument we may assume that there are two sequences $t^-_j\nearrow t$ and $t^+_j\searrow t$ such that
	\begin{equation*}
		\lim_{j\to +\infty}\|\widehat{u}_{\tau_j,\varepsilon_j}(t^-_j)- u^-(t)\|+ \|\widehat{u}_{\tau_j,\varepsilon_j}(t^+_j)- u^+(t)\|=0\,,
	\end{equation*}
	and
	\begin{equation}\label{fatto2}
		\lim\limits_{j\to+\infty}\varepsilon_j\dot{\widehat{u}}_{\tau_j,\varepsilon_j}(t^-_j)=	\lim\limits_{j\to+\infty} \varepsilon_j\dot{\widehat{u}}_{\tau_j,\varepsilon_j}(t^+_j)=0\,.
	\end{equation}
	By exploiting \eqref{eq:vanratio}, Proposition~\ref{prop:convergence} yields as a byproduct
	\begin{equation}\label{fatto1}
		\begin{split}
			&\lim_{j\to +\infty}\|\overline{u}_{\tau_j,\varepsilon_j}(t^-_j)- u^-(t)\|+ \|\overline{u}_{\tau_j,\varepsilon_j}(t^+_j)- u^+(t)\|=0\,, \\
			&\lim_{j\to +\infty}\|\widetilde{u}_{\tau_j,\varepsilon_j}(t^-_j)- u^-(t)\|+ \|\widetilde{u}_{\tau_j,\varepsilon_j}(t^+_j)- u^+(t)\|=0\,, \\
		\end{split}
	\end{equation}
	and
	\begin{equation}\label{fatto3}
		\lim\limits_{j\to+\infty}\varepsilon_j\dot{\widetilde{u}}_{\tau_j,\varepsilon_j}(t^-_j)=	\lim\limits_{j\to+\infty} \varepsilon_j\dot{\widetilde{u}}_{\tau_j,\varepsilon_j}(t^+_j)=0\,.
	\end{equation}
	The continuity of $\mc E$ together with \eqref{eq:energyinterp} and \eqref{fatto1} now implies that
	\begin{equation}\label{servira}
		\lim\limits_{j\to+\infty}\mc E_{\tau_j}(t^-_j,\overline{u}_{\tau_j,\varepsilon_j}(t^-_j))-\mc E_{\tau_j}(t^+_j,\overline{u}_{\tau_j,\varepsilon_j}(t^+_j))= \mc E(t,u^-(t))-\mc E(t,u^+(t)).
	\end{equation} 
	 
	For a lighter exposition, with a little abuse of notation we denote by $\mathfrak{t}^-_j$ and $\mathfrak{t}^+_j$ the least points of partition $\Pi_{\tau_j}$ which are greater or equal to $t^-_j$ and $t^+_j$, respectively (see \eqref{node}). By exploiting \eqref{eq:augmebinterp}, \eqref{fatto2}, \eqref{servira},
	and from the definition of the contact potential $p_\V$ we thus infer:
	\begin{equation*}
		\begin{split}
			&\qquad\mc E(t,u^-(t))-\mc E(t,u^+(t))\\
			&=\displaystyle\lim_{j\to+\infty}\biggl[\frac{\varepsilon_j^2}{2}{\left\|\dot{\widehat{u}}_{\tau_j,\varepsilon_j}(t^-_j)\right\|^2_{\mathbb{M}}} + \mc E_{\tau_j}(t^-_j,\overline{u}_{\tau_j,\varepsilon_j}(t^-_j)) + \int_{\mathfrak{t}^-_j}^{\mathfrak{t}^+_j}\partial_t\mc E(r,\underline{u}_{\tau_j,\varepsilon_j}(r))\,\mathrm{d}r \\
			& \qquad\qquad +\frac{\Lambda-\Lambda_\V}{2}\tau^2\int_{\mathfrak{t}_j^-}^{\mathfrak{t}_j^+}\|\dot{\widehat{u}}_{\tau_j,\varepsilon_j}(r)\|_\mathbb{I}^2\d r -\frac{\varepsilon_j^2}{2}{\left\|\dot{\widehat{u}}_{\tau_j,\varepsilon_j}(t^+_j)\right\|^2_{\mathbb{M}}}- \mc E_{\tau_j}(t^+_j,\overline{u}_{\tau_j,\varepsilon_j}(t^+_j)) \biggr]\\
			&\ge\limsup_{j\to +\infty}\int_{\mathfrak{t}^-_j}^{\mathfrak{t}^+_j}\mc R_{\varepsilon_j}(\dot{\widehat{u}}_{\tau_j,\varepsilon_j}(r)) + \mc R^*_{\varepsilon_j}(\overline{w}_{\tau_j,\varepsilon_j}(r))\,\mathrm{d}r\ge \limsup_{j\to +\infty}\int_{\mathfrak{t}^-_j}^{\mathfrak{t}^+_j}p_\V(\dot{\widehat{u}}_{\tau_j,\varepsilon_j}(r),\overline{w}_{\tau_j,\varepsilon_j}(r))\d r\,.
		\end{split}
	\end{equation*}
	Taking into account \eqref{eq:stimapv}, we can continue the above inequality getting
	\begin{align*}
		&\quad\mc E(t,u^-(t))-\mc E(t,u^+(t))\\
		&\ge \limsup_{j\to +\infty}\left[\int_{\mathfrak{t}^-_j}^{\mathfrak{t}^+_j}p_\V(\dot{\widetilde{u}}_{\tau_j,\varepsilon_j}(r),\overline{w}_{\tau_j,\varepsilon_j}(r))\d r-C\int_{\mathfrak{t}^-_j}^{\mathfrak{t}^+_j}(1+\|\overline{w}_{\tau_j,\varepsilon_j}(r))\|_*)\|\dot{\widetilde{u}}_{\tau_j,\varepsilon_j}(r)-\dot{\widehat{u}}_{\tau_j,\varepsilon_j}(r)\|\d r\right].
	\end{align*}
	Thanks to \eqref{eq:wbounds}, \eqref{l2convtris} and the assumption \eqref{eq:vanratio} we easily obtain
	\begin{equation}\label{vanish}
		\int_{\mathfrak{t}^-_j}^{\mathfrak{t}^+_j}(1+\|\overline{w}_{\tau_j,\varepsilon_j}(r))\|_*)\|\dot{\widetilde{u}}_{\tau_j,\varepsilon_j}(r)-\dot{\widehat{u}}_{\tau_j,\varepsilon_j}(r)\|\d r\le C\frac{\tau_j}{\eps_j^2}(\mathfrak{t}^+_j-\mathfrak{t}^-_j)\le C(\mathfrak{t}^+_j-\mathfrak{t}^-_j)\to 0,
	\end{equation}
	and thus, also taking any $p\in RCP_\V$, we get
	\begin{align*}
		&\quad\mc E(t,u^-(t))-\mc E(t,u^+(t))\\
		&\ge \limsup_{j\to +\infty}\int_{\mathfrak{t}^-_j}^{\mathfrak{t}^+_j}p_\V(\dot{\widetilde{u}}_{\tau_j,\varepsilon_j}(r),\overline{w}_{\tau_j,\varepsilon_j}(r))\d r\ge \limsup_{j\to +\infty}\int_{\mathfrak{t}^-_j}^{\mathfrak{t}^+_j}p(\dot{\widetilde{u}}_{\tau_j,\varepsilon_j}(r),\overline{w}_{\tau_j,\varepsilon_j}(r))\d r.
	\end{align*}
	By exploiting property $(iv)$ of Definition~\ref{RCP} and using \eqref{eq:wvanishing} and (d) in Corollary~\ref{prop:inequality1} we deduce
	\begin{equation*}
		\begin{split}
			&\quad\int_{\mathfrak{t}^-_j}^{\mathfrak{t}^+_j}|p(\dot{\widetilde{u}}_{\tau_j,\varepsilon_j}(r),\overline{w}_{\tau_j,\varepsilon_j}(r))-p(\dot{\widetilde{u}}_{\tau_j,\varepsilon_j}(r),\widetilde{w}_{\tau_j,\varepsilon_j}(r))|\d r \\
			&\leq L \max\limits_{r\in [0,T]\backslash\Pi_\tau}\|\overline{w}_{\tau_j,\varepsilon_j}(r)-\widetilde{w}_{\tau_j,\varepsilon_j}(r)\|_* \int_{\mathfrak{t}^-_j}^{\mathfrak{t}^+_j}\|\dot{\widetilde{u}}_{\tau_j,\varepsilon_j}(r)\|\,\mathrm{d}r\\
			&\le L \max\limits_{r\in [0,T]\backslash\Pi_\tau}\|\overline{w}_{\tau_j,\varepsilon_j}(r)-\widetilde{w}_{\tau_j,\varepsilon_j}(r)\|_*\left(\int_{\mathfrak{t}^-_j}^{\mathfrak{t}^+_j}\|\dot{\widetilde{u}}_{\tau_j,\varepsilon_j}(r)-\dot{\widehat{u}}_{\tau_j,\varepsilon_j}(r)\|\d r+\int_{0}^{T}\|\dot{\widehat{u}}_{\tau_j,\varepsilon_j}(r)\|\d r\right)\to0\,.
		\end{split}
	\end{equation*}
	We indeed notice that the first term within the brackets is bounded (actually vanishes) by arguing as in \eqref{vanish}.\\
	Therefore, we finally obtain
	\begin{align*}
		&\quad\mc E(t,u^-(t))-\mc E(t,u^+(t))\ge   \limsup_{j\to +\infty}\int_{\mathfrak{t}^-_j}^{\mathfrak{t}^+_j}p(\dot{\widetilde{u}}_{\tau_j,\varepsilon_j}(r),\widetilde{w}_{\tau_j,\varepsilon_j}(r))\d r\\
		&= \limsup_{j\to +\infty}\int_{\mathfrak{t}^-_j}^{\mathfrak{t}^+_j}p(\dot{\widetilde{u}}_{\tau_j,\varepsilon_j}(r),-\varepsilon_j^2\mathbb{M}\ddot{\widetilde{u}}_{\tau_j,\varepsilon_j}(r)-D_x\mc E(r,\widetilde{u}_{\tau_j,\varepsilon_j}(r)))\d r\,.
	\end{align*}
	The rest of the proof follows closely the argument of Proposition~\ref{proposizione1} from \eqref{tofreeze} on, with $\widetilde{u}_{\tau_j,\varepsilon_j}$ in place of $u^{\varepsilon_j}$, by exploiting \eqref{fatto1} and \eqref{fatto3}. We then omit the details.
	\endproof
	\subsection{An enhanced version of the scheme}
	We conclude by proposing a slightly modified discrete algorithm which allows to get rid of the assumption
	\begin{equation*}
		\frac{\tau}{\eps^2}\quad \text{bounded},
	\end{equation*}
	needed for Proposition~\ref{proposizione1bis}. To describe it, we consider an additional parameter $\delta\in [0,1)$ and in \eqref{scheme} we replace $\eps$ by $\sqrt{\eps^2+\delta}$, and we carefully adjust the initial velocity; namely we consider the following incremental variational scheme:
	\begin{equation}\label{deltalgorithm}
		\begin{cases}
				u^k_{\tau,\varepsilon,\delta}\in \mathop{\rm arg\,min}\limits_{x\in X}\mathcal{F}_{\tau,\sqrt{\varepsilon^2+\delta}}(t^k,x, u^{k-1}_{\tau,\varepsilon,\delta}, u^{k-2}_{\tau,\varepsilon,\delta}),&k\in\mc K_\tau,\\	
				u^0_{\tau,\varepsilon,\delta}:=u_0^\varepsilon\,,\quad u^{-1}_{\tau,\varepsilon,\delta}:= u_0^\eps-\tau \frac{\eps}{\sqrt{\eps^2+\delta}}u_1^\varepsilon\,.
				\end{cases}
	\end{equation}
	For $\delta=0$ we easily recover the original scheme \eqref{schemeincond}.\par 
	Since the only change with respect to previous sections is the replacement of $\eps$ by $\sqrt{\eps^2+\delta}$, all the results still hold true if in the statements one performs the same replacement (without touching the initial data $u_0^\eps$, $u_1^\eps$). In particular part (II) of Theorem~\ref{mainthm} can be rewritten as follows:
	\begin{thm}\label{thmdelta}
			Let $\M,\V$ satisfy \eqref{mass}, \eqref{viscosity} and assume \ref{hyp:E1}--\ref{hyp:E5}, \ref{hyp:E3'}, and \ref{hyp:R1}. Let $u_0^\varepsilon\to u_0$, $\eps u_1^\eps\to 0$. Then for every sequence $(\tau_j,\epsj,\delta_j)\to (0,0,0)$ satisfying
			\begin{equation}\label{delta}
				\sup\limits_{j\in\N}\frac{\tau_j}{\varepsilon_j^2+\delta_j}<+\infty,
			\end{equation} 
			there exists a subsequence (not relabelled) along which the sequence of piecewise affine interpolants $\widehat{u}_{\tau_j,\epsj,\delta_j}$ pointwise converges to an Inertial Virtual Viscosity solution of the rate-independent system \eqref{quasistprob}.
	
		Furthermore, the limit function is an Inertial Balanced Viscosity solution if $\V$ is positive-definite.
	\end{thm}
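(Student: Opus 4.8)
The plan is to obtain Theorem~\ref{thmdelta} as an immediate consequence of part~(II) of Theorem~\ref{mainthm}, through the reparametrization $\widetilde\varepsilon:=\sqrt{\varepsilon^2+\delta}$ already anticipated at the beginning of this subsection. The first observation is that the functional minimized at each step of \eqref{deltalgorithm} is literally $\mathcal{F}_{\tau,\widetilde\varepsilon}$, that is, the functional of \eqref{scheme} with inertia/viscosity parameter $\widetilde\varepsilon$ in place of $\varepsilon$; moreover the initial datum $u^{-1}_{\tau,\varepsilon,\delta}=u_0^\varepsilon-\tau\frac{\varepsilon}{\widetilde\varepsilon}u_1^\varepsilon$ can be written as $\widetilde u_0-\tau\widetilde u_1$ with $\widetilde u_0:=u_0^\varepsilon$ and $\widetilde u_1:=\frac{\varepsilon}{\widetilde\varepsilon}u_1^\varepsilon$. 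Hence the recursive sequence $\{u^k_{\tau,\varepsilon,\delta}\}$ of \eqref{deltalgorithm} (whose existence is inherited from the analogous property of \eqref{schemeincond}), and therefore also its piecewise affine interpolant $\widehat u_{\tau,\varepsilon,\delta}$ built via \eqref{affineinterpolant}, coincides with the one produced by the scheme \eqref{schemeincond} run with time step $\tau$, parameter $\widetilde\varepsilon$, and initial data $\widetilde u_0,\widetilde u_1$.

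The second step is to check that the hypotheses of part~(II) of Theorem~\ref{mainthm} are met along the sequence $(\tau_j,\widetilde\varepsilon_j)$, $\widetilde\varepsilon_j:=\sqrt{\varepsilon_j^2+\delta_j}$. Since $\varepsilon_j,\delta_j\to0$ we have $\widetilde\varepsilon_j\to0$, so $(\tau_j,\widetilde\varepsilon_j)\to(0,0)$; moreover $\widetilde u_0=u_0^{\varepsilon_j}\to u_0$ and $\widetilde\varepsilon_j\widetilde u_1=\varepsilon_j u_1^{\varepsilon_j}\to0$, so the assumptions on the initial data (in particular their uniform boundedness) hold with exactly the same bounds as for the original problem, because $\sup_j\|\widetilde u_0\|=\sup_j\|u_0^{\varepsilon_j}\|$ and $\sup_j\|\widetilde\varepsilon_j\widetilde u_1\|=\sup_j\|\varepsilon_j u_1^{\varepsilon_j}\|$. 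Finally, the ratio condition required in part~(II) reads $\sup_j\tau_j/\widetilde\varepsilon_j^2=\sup_j\tau_j/(\varepsilon_j^2+\delta_j)$, which is finite precisely by \eqref{delta}; as already noted in Section~\ref{sec:discrete}, this forces $\tau_j/\widetilde\varepsilon_j\to0$ and, when $\V$ is positive-definite, guarantees \eqref{largeratio} for $j$ large enough.

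With these verifications in place, part~(II) of Theorem~\ref{mainthm} applied to the reparametrized scheme yields a (not relabelled) subsequence along which $\widehat u_{\tau_j,\varepsilon_j,\delta_j}$ converges pointwise to an Inertial Virtual Viscosity solution of \eqref{quasistprob}, which is an Inertial Balanced Viscosity solution if $\V$ is positive-definite. Since there is no new analytic estimate to establish — the whole argument is a bookkeeping of the substitution — I do not expect any genuine obstacle. The only point deserving a line of attention is that the notion of IVV/IBV solution, through the cost \eqref{eq:cost}, involves the constant $\overline C$ of Proposition~\ref{propbounds} and Corollary~\ref{prop:inequality1}: one should remark that the a priori bounds of Corollary~\ref{prop:inequality1} produced for the reparametrized problem depend only on $\M,\V,\mathbb{I},\Lambda$, the functions $a,b,c_R$, the constant $a_1$, the ratio bound, and $\sup_j\|\widetilde u_0\|$, $\sup_j\|\widetilde\varepsilon_j\widetilde u_1\|$, all of which coincide with (or are dominated by) the corresponding quantities for the original problem, so that $\overline C$ can be chosen once and for all, independently of $\delta$, and the target notion of solution is genuinely unchanged under the reparametrization.
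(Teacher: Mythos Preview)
Your proposal is correct and follows precisely the paper's own approach: the paper does not give a formal proof of Theorem~\ref{thmdelta}, but simply remarks that ``since the only change with respect to previous sections is the replacement of $\varepsilon$ by $\sqrt{\varepsilon^2+\delta}$, all the results still hold true if in the statements one performs the same replacement (without touching the initial data $u_0^\varepsilon$, $u_1^\varepsilon$)'', and then states the theorem. Your explicit verification that $\widetilde\varepsilon_j\widetilde u_1=\varepsilon_j u_1^{\varepsilon_j}\to 0$ and that the constant $\overline C$ is independent of $\delta$ is exactly the bookkeeping the paper leaves implicit.
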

	The advantage of condition \eqref{delta} is that it is automatically satisfied by any sequence $\delta_j$ for which
	\begin{equation}\label{deltatau}
		\sup\limits_{j\in\N}\frac{\tau_j}{\delta_j}<+\infty,
	\end{equation}
	and thus it permits to separate the vanishing rates of $\tau_j$ and $\epsj$, which can be completely unrelated. \par
	We finally notice that the simplest choice of $\delta=\tau$ in \eqref{deltalgorithm} trivially fulfils \eqref{deltatau} along any subsequence, and thus allows to obtain Theorem~\ref{thmdelta} without really adding a further parameter.
	\bigskip
	
	\noindent\textbf{Acknowledgements.} The authors are members of Gruppo Nazionale per l'Analisi Matematica, la Probabilit\`a e le loro Applicazioni (GNAMPA) of INdAM.
	
	The authors have been supported by the Italian Ministry of Education, University and Research through the Project “Variational methods for stationary and evolution problems with singularities and interfaces” (PRIN 2017).
	
	\bigskip

	{\small
		
		\vspace{10pt} (Filippo Riva) Dipartimento di Matematica “Felice Casorati”, Università degli Studi di Pavia,\\ \textsc{Via Ferrata, 5, 27100, Pavia, Italy}
		\\ 
		\textit{e-mail address}: \textsf{filippo.riva@unipv.it}
		\par	
		
		\vspace{10pt} (Giovanni Scilla) Dipartimento di Scienze di Base ed Applicate per l'Ingegneria, Sapienza Università di Roma, \\
		\textsc{Via Scarpa 16, 00169, Rome, Italy}
		\\
		\textit{e-mail address}: \textsf{giovanni.scilla@uniroma1.it}
		\par
		
		\vspace{10pt} (Francesco Solombrino) Dipartimento di Matematica ed Applicazioni “R. Caccioppoli”, Università degli Studi di Napoli Federico II,\\
		\textsc{Via Cintia, Monte Sant'Angelo, 80126, Naples, Italy}
		\\
		\textit{e-mail address}: \textsf{francesco.solombrino@unina.it}
		\par
		
	}
	
\end{document}